\numberwithin{equation}{section}
\numberwithin{figure}{section}
\newtheorem{theorem}{Theorem}[section]
\newtheorem{corollary}[theorem]{Corollary}
\newtheorem{proposition}[theorem]{Proposition}
\newtheorem{lemma}[theorem]{Lemma}
\theoremstyle{definition}
\newtheorem{definition}[theorem]{Definition}
\newtheorem{remark}[theorem]{Remark}
\DeclarePairedDelimiter{\floor}{\lfloor}{\rfloor}
\newcommand*{\R}{\ensuremath{\mathbb{R}}}
\newcommand{\eps}{\varepsilon}
\renewcommand*{\tilde}{\widetilde}
\renewcommand{\P}{\ensuremath{\mathbb{P}}}
\DeclareMathOperator{\dist}{dist}
\DeclareSymbolFont{boldoperators}{OT1}{cmr}{bx}{n}
\newcommand{\T}{\mathbb{T}}
\def\XXint#1#2#3{{\setbox0=\hbox{$#1{#2#3}{\int}$}
\vcenter{\hbox{$#2#3$}}\kern-.5\wd0}}
\let\originalleft\left
\let\originalright\right
\renewcommand{\left}{\mathopen{}\mathclose\bgroup\originalleft}
\renewcommand{\right}{\aftergroup\egroup\originalright}
\newcommand{\sol}{\mathcal{T}}
\newcommand{\indc}{{\mathds{1}}}
\newcommand{\E}{\mathbb{E}}
    \edef\sign{\pgfmathresult}%
    \edef\x{\pgfmathresult}%
    \edef\t{\pgfmathresult}%
    \edef\y{\pgfmathresult}%
\newcommand{\addperiod}[1]{#1.}
\titleformat*{\subsection}{\bfseries}
\titleformat{\subsubsection}[runin]
  {\normalfont\bfseries}
  {\thesubsubsection.}
  {0.5em}
  {\addperiod}
\titleformat*{\subsubsection}{\normalfont\itshape}
\titleformat*{\paragraph}{\bfseries}
\titleformat*{\subparagraph}{\large\bfseries}
\title{A universal total anomalous dissipator}
\author{Elias Hess-Childs\thanks{Department of Mathematical Sciences, Carnegie Mellon University.
{\footnotesize \href{mailto:aa@cims.nyu.edu}{ehesschi@andrew.cmu.edu}.}
}
\and 
Keefer Rowan\thanks{Courant Institute of Mathematical Sciences,  New York University.
{\footnotesize \href{mailto:keefer.rowan@cims.nyu.edu}{keefer.rowan@cims.nyu.edu}.}
}
}
\date{\today}
\begin{document}

\maketitle

\begin{abstract}
    For all $\alpha\in(0,1)$, we construct an explicit divergence-free vector field $V\in L^\infty_tC^\alpha_x \cap C^{\frac{\alpha}{1-\alpha}}_t L^\infty_x$ so that the solutions to the drift-diffusion equations
    \[\partial_t\theta^\kappa-\kappa\Delta\theta^\kappa+V\cdot\nabla\theta^\kappa=0\]
    exhibit asymptotic total dissipation for all mean-zero initial data: $\lim_{\kappa\rightarrow 0}\|\theta^\kappa(1,\cdot)\|_{L^2}=0$. Additionally, we give explicit rates in $\kappa$ and uniform dependence on initial data.
\end{abstract}

\section{Introduction}

In this paper, we consider solutions to the drift-diffusion equation
\begin{equation}\label{eq:intro_drift_diffusion_equation}
        \begin{cases}
            \partial_t \theta^\kappa - \kappa \Delta \theta^\kappa + V \cdot \nabla \theta^\kappa = 0&\text{in}\ (0,1)\times \T^2,\\
            \theta^\kappa(0,\cdot) = \theta_0(\cdot)&\text{on}\ \T^2,
        \end{cases}
\end{equation}
where $V(t,x)$ is a divergence-free vector field and $\theta_0$ is in $TV(\T^2),$ the space of Borel measures with finite total variation.

The main result of this paper is the existence of an explicit vector field $V(t,x)$ so that the corresponding passive scalar $\theta^\kappa$ exhibits \textit{asymptotic total dissipation} for all initial data.

\begin{theorem}\label{thm:total_dissipation}
    For all $\alpha \in (0,1)$, there exists a constant $C(\alpha)>0$ and a divergence-free vector field $V\in L^\infty([0,1], C^\alpha(\T^2)) \cap C^{\frac{\alpha}{1-\alpha}}([0,1], L^\infty(\T^2))$ so that for all mean-zero $\theta_0 \in TV(\T^2)$ and $\kappa>0$, the unique solution $\theta^\kappa$ to~\eqref{eq:intro_drift_diffusion_equation} satisfies the estimate
    \begin{equation}
    \label{eq:main-theorem-estimate}
    \|\theta^\kappa(1,\cdot)\|_{L^1(\T^2)} \leq C\kappa^{\frac{(1-\alpha)^2}{72}}\|\theta_0\|_{TV(\T^2)}.
    \end{equation}
\end{theorem}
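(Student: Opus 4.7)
The plan is to build $V$ as a self-similar dyadic superposition of a fixed base mixer. Partition $[0,1]$ into intervals $I_n = [t_{n-1}, t_n]$ of length $\tau_n := c_0\, 2^{-n(1-\alpha)}$ (with $c_0$ chosen so $\sum_n \tau_n \leq \tfrac12$) and set spatial scales $\lambda_n := 2^{-n}$. On each $I_n$, let $V(t,x) := (\lambda_n/\tau_n)\, U\bigl((t-t_{n-1})/\tau_n,\, x/\lambda_n\bigr)$, where $U \in L^\infty_t C^\alpha_x \cap C^{\alpha/(1-\alpha)}_t L^\infty_x$ is a single divergence-free ``base mixer'' on the unit cell. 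The relation $\tau_n = \lambda_n^{1-\alpha}$ (up to the fixed constant $c_0$) is precisely the one that makes both $\|V_n\|_{L^\infty_t C^\alpha_x} \sim \lambda_n^{1-\alpha}/\tau_n$ and $\|V_n\|_{C^{\alpha/(1-\alpha)}_t L^\infty_x} \sim (\lambda_n/\tau_n)\,\tau_n^{-\alpha/(1-\alpha)}$ uniform in $n$, so the two endpoint regularities of $V$ reduce to the corresponding ones of $U$.

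The substance of the proof is a \emph{base mixer lemma}: construct a universal $U$ together with constants $s>0$, $\gamma\in(0,1)$, $\kappa_0>0$ such that for every $\kappa \leq \kappa_0$ and every mean-zero $\rho_0 \in TV(\T^2)$, the solution to the $U$-driven drift-diffusion equation on $[0,1]$ satisfies $\|\rho(1,\cdot)\|_{H^{-s}} \leq \gamma\, \|\rho_0\|_{TV}$. Natural candidates are alternating horizontal/vertical sine shears of Pierrehumbert type, or related exponentially mixing pulsed cellular flows, but the delicate point is to establish the contraction uniformly over all $TV$-data. By the self-similar rescaling, $V_n$ then effects the same $\gamma$-contraction in a rescaled $\dot H^{-s}$ norm at scale $\lambda_n$, as long as the stage P\'eclet number $\lambda_n^{1+\alpha}/\kappa$ is large, i.e., $\lambda_n \gg \kappa^{1/(1+\alpha)}$.

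Iterating over $N \sim (1+\alpha)^{-1}\log_2(1/\kappa)$ stages yields $\|\theta^\kappa(t_N,\cdot)\|_{H^{-s}} \lesssim \kappa^\beta \|\theta_0\|_{TV}$ with $\beta = \log_2(1/\gamma)/(1+\alpha)$. On the remaining macroscopic interval $[t_N,1]$, parabolic smoothing combined with the maximum-principle bound $\|\theta^\kappa(t,\cdot)\|_{L^\infty} \lesssim t^{-d/2}\kappa^{-d/2}\|\theta_0\|_{TV}$ permits interpolation between $H^{-s}$, $L^2$, and $L^\infty$ to extract $\|\theta^\kappa(1,\cdot)\|_{L^1} \leq \|\theta^\kappa(1,\cdot)\|_{L^2} \lesssim \kappa^{(1-\alpha)^2/72}\|\theta_0\|_{TV}$; the explicit exponent arises from tracking the advective/diffusive scale matching (one factor of $1-\alpha$ in converting the dyadic stage count into a power of $\kappa$) together with the interpolation back to $L^1$ in the endgame (a second factor of $1-\alpha$), with the numerical denominator absorbing the several scaling losses.

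The hard part, where essentially all real analysis lies, is the base mixer lemma itself: designing $U$ so that it is a \emph{universal} $H^{-s}$-contraction on every mean-zero finite Borel measure (not merely smooth or well-distributed data), uniformly in $\kappa \leq \kappa_0$, and doing so with sufficiently robust control that the rescaled contractions composed across infinitely many stages do not interfere destructively across neighboring scales. Once that lemma is in hand, the assembly, regularity verification, and interpolation endgame are essentially bookkeeping driven by the matched scalings $\tau_n = \lambda_n^{1-\alpha}$.
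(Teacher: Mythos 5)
There is a genuine gap, and it sits exactly where you place ``essentially all real analysis'': the base mixer lemma, as you state it, is both non-iterable and unproven. First, the non-iterability: your lemma controls $\|\rho(1,\cdot)\|_{H^{-s}}$ by $\gamma\|\rho_0\|_{TV}$, i.e.\ the output norm is weaker than the input norm (in $d=2$ one has $TV \hookrightarrow H^{-s}$ for $s>1$, not the reverse). After stage one you only know the solution in $H^{-s}$, but stage two's hypothesis requires $TV$ control of its input, so the composition across stages does not close; you would need a genuine contraction $H^{-s}\to H^{-s}$ (or $TV\to TV$) uniformly in $\kappa$. Second, even in that corrected form, a single deterministic incompressible flow that contracts a fixed negative Sobolev norm by a uniform factor $\gamma<1$ \emph{for every mean-zero finite measure and uniformly in all small $\kappa$} is not a known result for Pierrehumbert-type or pulsed cellular flows, and it is arguably at least as hard as the theorem itself. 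Indeed, the forward cascade you propose (spatial scales $\lambda_n=2^{-n}$ decreasing in time) is precisely the architecture of the prior mixing-based constructions, and those only ever achieved rates $\rho(\theta_0)$ depending on $\|\nabla\theta_0\|_{L^2}/\|\theta_0\|_{L^2}$: a forward mixer must handle data that is adversarially ``pre-unmixed'' relative to the flow, and nothing in your scheme rules this out. So the reduction you describe does not leave only bookkeeping; it leaves the entire theorem.

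The paper's route is structurally different in two ways that are worth internalizing. (i) The cascade runs in the \emph{opposite} direction: $V\equiv 0$ on $[0,\tfrac12]$, so pure diffusion first regularizes arbitrary $TV$ data to something essentially piecewise constant on a grid of scale $\ll\kappa^{1/2}$; then on $[\tfrac12,1]$ the flow performs an inverse cascade, merging piecewise-constant data on dyadic boxes from that fine scale up to the unit scale. This initial diffusive stage is what defeats adversarial data and buys uniformity over all of $TV$. (ii) The base lemma is correspondingly much more modest: it is not a universal contraction but a statement about \emph{one explicit initial datum} (the two-cell data $\indc_{\{x<\sqrt2/2\}}$ being driven to the constant $\tfrac12$ under the time-compressed Alberti--Crippa--Mazzucato flow, with boundary data treated as error), proved by perturbing off the exact transport dynamics via stochastic characteristics. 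Universality is then an artifact of the architecture---each stage only ever acts on (approximately) piecewise constant data, for which the two-cell lemma applies cell pair by cell pair---rather than a property demanded of the base flow. If you want to salvage your approach, you would need to either prove the uniform $H^{-s}\to H^{-s}$ contraction (a substantial open-type problem) or restructure so that the heavy lemma only needs to hold for a single canonical datum, which is in effect what the paper does.
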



In the above statement and throughout, for $a \in [0,\infty)$, we use $C^a$ to denote the H\"older space $C^{\floor{a}, a- \floor{a}}$.

As a simple corollary, we get asymptotic total dissipation estimates in $L^p$ for all $p \in [1,\infty]$ as well as asymptotic total dissipation in the positive regularity norm $H^\sigma$ for negative regularity data in $H^{-\sigma}.$ We use the vector field $V$ of Theorem~\ref{thm:total_dissipation} first forward in time and then backward in time. This allows us to get dissipation in $L^1$ from Theorem~\ref{thm:total_dissipation} and then dissipation in $L^\infty$ from taking the adjoint. Additionally, we add times during which the vector field is $0$ to exploit the smoothing of the heat equation. We delay the proof until the end of Section~\ref{sec:definition-outline}. 

\begin{definition}
\label{def:W-def}
    Let $V : [0,1] \times \T^2 \to \R^2$ be as in Theorem~\ref{thm:total_dissipation}. Then we define $W : [0,1] \times \T^2 \to\R^2,$
    \[W(t,x) := V(4t -1, x) \indc_{t \in [1/4,1/2]} + V(3-4t, x) \indc_{t \in [1/2,3/4]}.\]
\end{definition}

\begin{corollary}
\label{cor:other-dissipations}
    For all $\alpha \in (0,1)$, there exists a constant $C(\alpha)>0$ so that for all mean-zero $\theta_0$ and $\kappa>0$, the unique solution $\theta^\kappa$ to~\eqref{eq:intro_drift_diffusion_equation}---with $W$ of Definition~\ref{def:W-def} in place of $V$---satisfies the estimates
    \begin{align}
    \label{eq:cor-lp}
        \|\theta^\kappa(1,\cdot)\|_{L^p(\T^2)}&\leq C \kappa^{\frac{(1-\alpha)^2}{72}}\|\theta_0\|_{L^p(\T^2)},\qquad &\forall p \in [1,\infty],\\
        \label{eq:cor-Hs}
        \|\theta^\kappa(1,\cdot)\|_{H^\sigma(\T^2)} &\leq C\kappa^{\frac{(1-\alpha)^2}{144}}\|\theta_0\|_{H^{-\sigma}(\T^2)},\qquad &\forall \sigma \in \Big[0,\tfrac{(1-\alpha)^2}{288}\Big].
    \end{align}
\end{corollary}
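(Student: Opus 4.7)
The plan is to exploit the structure of $W$ from Definition~\ref{def:W-def}: the pure-heat intervals $[0,1/4]$ and $[3/4,1]$ provide smoothing, while the active interval $[1/4,3/4]$ runs $V$ forward on $[1/4,1/2]$ and backward on $[1/2,3/4]$, giving $L^1$ dissipation for $\theta^\kappa$ directly and $L^\infty$ dissipation by duality.

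For \eqref{eq:cor-lp}, I will establish the endpoint cases $p=1$ and $p=\infty$ and interpolate by Riesz--Thorin applied to the linear solution operator $\theta_0\mapsto\theta^\kappa(1,\cdot)$. The $L^1$ bound is direct: on the heat intervals the semigroup is an $L^1$-contraction; on $[1/4,1/2]$, $W(t,x)=V(4t-1,x)$ is a time-rescaled copy of $V$, so---absorbing the rescaling into constants and using $\|f\|_{TV(\T^2)}=\|f\|_{L^1(\T^2)}$ for $f\in L^1$---Theorem~\ref{thm:total_dissipation} yields $\|\theta^\kappa(1/2,\cdot)\|_{L^1}\leq C\kappa^{(1-\alpha)^2/72}\|\theta^\kappa(1/4,\cdot)\|_{L^1}$; on $[1/2,3/4]$ drift-diffusion with divergence-free drift is an $L^1$-contraction. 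For the $L^\infty$ bound, I dualize: given $\phi_0$ with $\|\phi_0\|_{L^1}\leq 1$, solve the backward adjoint $\partial_t\phi+W\cdot\nabla\phi+\kappa\Delta\phi=0$ with $\phi(1,\cdot)=\phi_0$, so that $|\int\theta^\kappa(1,\cdot)\phi_0|\leq\|\theta_0\|_{L^\infty}\|\phi(0,\cdot)\|_{L^1}$. Setting $\psi(t,\cdot):=\phi(1-t,\cdot)$ and using the evident symmetry $W(1-t,x)=W(t,x)$ from Definition~\ref{def:W-def}, $\psi$ solves a forward drift-diffusion with drift $-W$, to which the $L^1$ argument applies---provided $-V$ also satisfies Theorem~\ref{thm:total_dissipation}, a property to be read off the explicit construction---and yields $\|\phi(0,\cdot)\|_{L^1}\leq C\kappa^{(1-\alpha)^2/72}$. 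Riesz--Thorin interpolation then delivers \eqref{eq:cor-lp} for all $p\in[1,\infty]$.

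For \eqref{eq:cor-Hs}, I sandwich an $L^2\to L^2$ bound between heat-smoothing steps. The standard heat estimates on $\T^2$ for mean-zero $f$ give $\|e^{t\kappa\Delta}f\|_{L^2}\lesssim(t\kappa)^{-\sigma/2}\|f\|_{H^{-\sigma}}$ and $\|e^{t\kappa\Delta}f\|_{H^\sigma}\lesssim(t\kappa)^{-\sigma/2}\|f\|_{L^2}$. Applying these at $t=1/4$ on $[0,1/4]$ and $[3/4,1]$, together with the $L^2\to L^2$ bound (from Riesz--Thorin on the solution operator over $[1/4,3/4]$) in between, we obtain $\|\theta^\kappa(1,\cdot)\|_{H^\sigma}\leq C\kappa^{(1-\alpha)^2/72-\sigma}\|\theta_0\|_{H^{-\sigma}}$. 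For $\sigma\leq(1-\alpha)^2/288$ the exponent is at least $(1-\alpha)^2/96\geq(1-\alpha)^2/144$, yielding \eqref{eq:cor-Hs}.

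The main technical obstacles are (i) the implicit absorption of time-rescaling constants when applying Theorem~\ref{thm:total_dissipation} to $W$ on the sub-interval $[1/4,1/2]$ rather than on $[0,1]$, and (ii) the application of Theorem~\ref{thm:total_dissipation} with $-V$ in place of $V$ needed in the duality step; both should follow from the explicit structure of the construction, but each must be verified separately.
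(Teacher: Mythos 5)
Your overall architecture matches the paper's: reduce \eqref{eq:cor-lp} to the endpoint bounds $L^1_0 \to L^1_0$ and $L^\infty_0 \to L^\infty_0$ for the solution operator of $W$, interpolate by Riesz--Thorin, and obtain \eqref{eq:cor-Hs} by sandwiching the $L^2_0 \to L^2_0$ bound between the heat-smoothing estimates on $[0,\tfrac14]$ and $[\tfrac34,1]$ (your exponent bookkeeping there is correct, in fact slightly sharper than needed). The $L^1$ endpoint is also handled as in the paper.

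The genuine gap is in your $L^\infty$ endpoint. After dualizing the full operator and time-reversing, your $\psi$ evolves under the drift $-W$, and on $[\tfrac14,\tfrac12]$ this is (a rescaling of) $-V$; to extract decay from that piece you must invoke Theorem~\ref{thm:total_dissipation} for $-V$. This is not a property you can ``read off the explicit construction'': the field $V$ is built from the \cite{alberti_exponential_2019} mixer $U$, whose defining property is that its \emph{forward} flow carries $\Theta_0$ to $\Theta_n$; the forward flow of $-U$ is not the inverse of the forward flow of $U$ (that inverse is the flow of $-U(n-t,\cdot)$), so there is no reason $-V$ mixes, let alone dissipates all $TV$ data with the same rate. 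Nor does the other piece of the dualized evolution save you: on $[\tfrac12,\tfrac34]$ the drift of $\psi$ rescales to $-V(1-\cdot,\cdot)$, whose solution operator is exactly the adjoint of $\sol^{V,\kappa/4}_{0,1}$ and is therefore known to decay only $L^\infty_0 \to L^\infty_0$, not $L^1_0 \to L^1_0$ as your argument requires. So as written, neither factor of the dualized evolution is known to contract mean-zero $L^1$ data by $C\kappa^{(1-\alpha)^2/72}$.

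The repair---and the paper's route---is to apply duality to one factor only, at the level of operator norms, rather than dualizing the whole evolution and re-running the PDE argument. Writing
\[
\sol^{\kappa,W,\T^2}_{0,1} \;=\; e^{\kappa\Delta/4}\, A_2\, A_1\, e^{\kappa\Delta/4},
\qquad A_1 := \sol^{\kappa/4,V,\T^2}_{0,1},
\]
the second half of $W$ is \emph{by design} the time-reversal of the first, so that $A_2$ is precisely the adjoint $A_1^*$. Hence $\|A_2\|_{L^\infty_0\to L^\infty_0} = \|A_1\|_{L^1_0\to L^1_0} \le C\kappa^{(1-\alpha)^2/72}$ with no further PDE input, while $A_1$ and the heat flows are $L^\infty$ contractions; this gives the $L^\infty_0 \to L^\infty_0$ endpoint directly. (Symmetrically, $A_2$ is an $L^1$ contraction and $A_1$ supplies the decay for the $L^1$ endpoint.) No statement about $-V$ is needed anywhere.
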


We note the following consequence of Theorem~\ref{thm:total_dissipation} on stochastic differential equations. If $X_t^\kappa$ is a solution to
\begin{equation}
\label{eq:stochastic-intro}\begin{cases}
dX_t^\kappa=V(t,X_t)\,dt+\sqrt{2\kappa}\,dw_t,\\
X_0^\kappa\sim\theta_0(dx),
\end{cases}\end{equation}
where $w_t$ is a standard Brownian motion on $\R^2$
and $\theta_0$ is an arbitrary probability measure on $\T^2$, then $\mathcal{L}(X_t^\kappa)$---the law of $X^\kappa_t$---is equal to the solution to~\eqref{eq:intro_drift_diffusion_equation}, $\theta^{\kappa}(t,x)\,dx$. Thus by Theorem~\ref{thm:total_dissipation},
\begin{equation}
\label{eq:spontaneous-stochasticity}
\Big\|\mathcal{L}(X_1^\kappa)-\big|\T^2\big|^{-1}dx\Big\|_{TV(\T^2)}\leq C\kappa^{(1-\alpha)^2/72}.
\end{equation}
That is, $X^\kappa_1$ converges to the uniform measure on $\T^2$ in total variation uniformly over all initial laws. The behavior of the stochastic trajectories solving~\eqref{eq:stochastic-intro} is further discussed in Subsection~\ref{subsec:spread}.

\subsection{Background}

The anomalous dissipation of energy is a foundational phenomenon in the study of fluid and passive scalar turbulence. Originating in fluid turbulence, anomalous dissipation describes the tendency for a turbulent fluid to dissipate energy at a constant rate independent of the molecular viscosity, despite viscosity being the ultimate mechanism of dissipation. See~\cite[Section 5.2]{frisch_turbulence_1995} for a lucid discussion of the phenomenon and its empirical evidence.

Anomalous dissipation is a cornerstone of phenomenological turbulence theory and is sometimes called the \textit{zeroth law of turbulence}. It is taken as a basic axiom to Kolmogorov's highly successful K41 theory~\cite{kolmogorov_degeneration_1941,kolmogorov_dissipation_1941,kolmogorov_local_1941}. A rigorous justification of anomalous dissipation in fluids derived purely from the incompressible fluid equations remains a major goal of mathematical fluid research. This goal however is well out of reach of current techniques. Instead, research has focused on the passive scalar problem, in which we study solutions $\theta^\kappa$---called \textit{passive scalars}---to the linear equation~\eqref{eq:intro_drift_diffusion_equation} where the advecting flow $V$ is specified independently (as opposed to being dynamically determined by a PDE, as in the case of fluid dynamics). When $V$ is the velocity field of a real fluid, the passive scalar equation describes the evolution of tracers---such as smoke or dye concentrations. 

The dynamics of passive scalars under suitably ``fluid-like'' advecting flows is called \textit{passive scalar turbulence} as the passive scalars exhibit many of the same phenomena as turbulent fluids. In particular, anomalous dissipation is expected for a broad variety of ``turbulent'' advecting flows and is an essential input to the heuristic phenomenological theory mirroring K41 theory developed by Obukhov~\cite{obukhov_structure_1949} and Corrsin~\cite{corrsin_spectrum_1951}. 

To be more precise, for solutions $\theta^\kappa$ to the passive scalar equation~\eqref{eq:intro_drift_diffusion_equation} and $\kappa>0$, using that $\nabla \cdot V=0,$ we have the fundamental energy identity
\begin{equation}
\label{eq:energy-identity}
\frac{d}{dt} \frac{1}{2} \|\theta^\kappa\|_{L^2}^2 = - \kappa \|\nabla \theta^\kappa\|_{L^2}^2.
\end{equation}
Thus, we might naively expect that as $\kappa \searrow 0$, we get asymptotic energy conservation
\[\lim_{\kappa \to 0} \|\theta^{\kappa}(t,\cdot)\|_{L^2} = \|\theta(0,\cdot)\|_{L^2}.\]
If the advecting flow $V$ is sufficiently smooth (e.g.\ $L^1_t W^{1,\infty}_x$), then one can bound $\|\nabla \theta^\kappa\|_{L^2}$ uniformly in $\kappa$ and prove that the above asymptotic energy conservation holds. However, for less regular flows (e.g.\ $L^\infty_t C^\alpha_x$), it is possible that $\|\nabla \theta^\kappa\|_{L^2}^2 \approx \kappa^{-1}$, and so 
\[\limsup_{\kappa \to 0} \|\theta^\kappa(t,\cdot)\|_{L^2} < \|\theta(0,\cdot)\|_{L^2}.\]
It is this phenomenon---the failure of energy to be conserved in the vanishing diffusivity limit---that we call the \textit{anomalous dissipation} of energy for passive scalars.

\subsection{Previous work}

\subsubsection{Anomalous dissipation examples}

Despite passive scalars being dramatically easier to study than fluids, examples of advecting flows that exhibit anomalous dissipation have only been given quite recently. Starting with~\cite{drivas_anomalous_2022}, there has been a profusion of examples:~\cite{colombo_anomalous_2023,armstrong_anomalous_2025,burczak_anomalous_2023,elgindi_norm_2024,rowan_anomalous_2024,hofmanova_anomalous_2023,rowan_accelerated_2024,johansson_anomalous_2024} among others. 

Let us briefly describe some aspects of these contributions. The velocity fields of \cite{drivas_anomalous_2022,colombo_anomalous_2023,elgindi_norm_2024} are essentially built on mixing examples. They prove anomalous dissipation by carefully understanding mixing dynamics---which transports Fourier mass from small to large wavenumbers---and treating the diffusion as a perturbation. The strongest such result is given by~\cite{elgindi_norm_2024}, in which they provide a vector field $u \in C^\infty([0,1], C^{1^-}(\T^2))$ that anomalously diffuses all smooth, mean-zero initial data as $\kappa \to 0$.

In \cite{johansson_anomalous_2024}, the authors take a fairly different perspective in order to build an autonomous 2D vector field $V$ that exhibits anomalous dissipation. Their proof works by carefully tracking stochastic characteristics and utilizing the connection between anomalous dissipation and spontaneous stochasticity, discussed below.

In~\cite{rowan_anomalous_2024}, the second author proves anomalous dissipation for the Kraichnan model, a stochastic advecting flow that is given by a white-in-time correlated-in-space Gaussian field which was introduced in the physics literature~\cite{kraichnanSmallScaleStructure1968} and given extensive study in the applied and physics literature~\cite{gawedzki_anomalous_1995,bernard_slow_1998,falkovichParticlesFieldsFluid2001}.

In~\cite{hofmanova_anomalous_2023}, the authors apply a singular-in-time stochastic forcing to the Navier--Stokes equations to generate a singular-in-time advecting flow that totally dissipates all mean-zero initial data: $\|\theta^\kappa(1,\cdot)\|_{L^2} = 0$ for all $\kappa>0.$ In~\cite{rowan_accelerated_2024} the second author shows all relaxation enhancing flows (\cite{constantin_diffusion_2008}) generate the same total dissipation phenomenon if suitably ``accelerated''. In both~\cite{hofmanova_anomalous_2023,rowan_accelerated_2024}, the advecting flow $V$ fails to be in $L^1([0,1], L^\infty(\T^2)).$

In~\cite{armstrong_anomalous_2025}, the authors use fractal homogenization---a very different technique than those used above---to prove anomalous dissipation. Their ``fluid-like'' construction---together with the proof techniques---was shown in~\cite{burczak_anomalous_2023} to be robust enough so that the advecting flow could be perturbed using convex integration to be a weak solution to the Euler equation while preserving anomalous dissipation. The advecting flow solving a fluid equation is highly desirable for treating realistic passive scalar turbulence. In~\cite{armstrong_anomalous_2025,burczak_anomalous_2023}, the anomalous dissipation is also present for all smooth mean-zero initial data, though only along a subsequence $\kappa_j \to 0.$ 

\subsubsection{Connections with other problems}

In~\cite{drivasLagrangianFluctuationDissipation2017}, the authors prove a quantitative connection between anomalous dissipation for scalars and \textit{spontaneous stochasticity}. Spontaneous stochasticity refers to non-trivial variance in the distribution of a particle being advected by the flow and forced by a white noise under the limit of vanishing noise intensity. A version of this phenomenon is evident in~\eqref{eq:spontaneous-stochasticity} as well as Subsection~\ref{subsec:spread}. This connection provides an alternative route for the study of anomalous dissipation and is exploited in~\cite{johansson_anomalous_2024}.

Additionally, as is noted in~\cite[Theorem 1.3]{rowan_anomalous_2024} but was essentially implicit in~\cite{drivasLagrangianFluctuationDissipation2017,drivas_anomalous_2022}, anomalous dissipation implies non-uniqueness of solutions to the transport PDE given by~\eqref{eq:intro_drift_diffusion_equation} with $\kappa=0$ as well as non-uniqueness of ODE solutions to $\dot x_t = V(t,x_t)$.

Lastly, the problem of anomalous dissipation of passive scalars is strongly related to the problem of anomalous dissipation for forced fluid equations~\cite{brue_anomalous_2023,cheskidov_dissipation_2024} through the $2+ \frac{1}{2}$ dimensional construction. In particular~\cite{cheskidov_dissipation_2024} uses the perfect mixing example of~\cite{alberti_exponential_2019} to generate total dissipation in this forced fluid equation setting. The \cite{alberti_exponential_2019} flow will also serve as the basis of our construction.

\subsection{Contributions}

\label{subsec:contribs}

The strongest result given by previous works is anomalous dissipation of the form
\[\limsup_{\kappa \to 0} \|\theta^\kappa(1,\cdot)\|_{L^2} \leq \rho(\theta_0) \|\theta_0\|_{L^2},\]
where the rate $\rho(\theta_0) < 1$ depends on $\|\nabla \theta\|_{L^2}\|\theta\|_{L^2}^{-1}$. In particular, the rate \textit{is not uniform} for all mean-zero $\theta_0\in L^2(\T^2)$. As such, our primary contribution is the first example of an anomalously dissipating flow that dissipates with a uniform rate for all mean-zero data in $L^2(\T^2)$. Further, as is shown in Theorem~\ref{thm:total_dissipation}, we actually treat $L^1$ (even $TV$) initial data, instead of just $L^2$ initial data, and still get a uniform rate of dissipation.

Additionally, we prove \textit{asymptotic total dissipation}, that is 
\[\limsup_{\kappa \to 0} \|\theta^\kappa(1,\cdot)\|_{L^2} = 0.\]
Total dissipation was studied in the forced fluid equation setting in~\cite{cheskidov_dissipation_2024}. However, total dissipation has not yet been studied in the passive scalar anomalous dissipation setting nor has it ever been shown for all initial data. 

Lastly, we provide quantitative bounds on the dissipation for all $\kappa>0.$ Namely, we get an explicit algebraic rate of the vanishing of the energy as $\kappa \to 0$, as is seen in~\eqref{eq:main-theorem-estimate}. This algebraic rate allows us to combine the dissipation estimate with the smoothing of the heat kernel to get uniform total dissipation of negative regularity initial data as well as to get the vanishing of positive regularity norms at the final time (as $\kappa \to0$), as is stated in~\eqref{eq:cor-Hs}. 

These results are all shown for velocity fields with good regularity: $V \in L^\infty_tC^\alpha_x \cap C^{\frac{\alpha}{1-\alpha}}_t L^\infty_x$. A technical refinement of the construction would allow the velocity field to take $\alpha= 1^-$, that is one could construct a single velocity field $V \in C^\infty([0,1], C^\alpha(\T^2))$ for all $\alpha \in (0,1)$ as in~\cite{elgindi_norm_2024}. Doing this would give a sub-algebraic rate in $\kappa$ of total dissipation however and substantially complicates the exposition.

The velocity field constructed here is very special and as such fairly ``unphysical'': in particular, it is neither ``generic'' nor ``fluid-like''. Additionally, the solutions $\theta^\kappa$ do not obey the Obukhov--Corrsin scaling studied in~\cite{drivas_anomalous_2022,colombo_anomalous_2023,elgindi_norm_2024}. In particular, since the limiting solution to the transport equation as $\kappa \to 0$ is spatially discontinuous---as is made clear below---we cannot possibly have uniform-in-$\kappa$ bounds on $\theta^\kappa$ in some H\"older space. Further, the dissipation measure---describing the time distribution of the energy dissipation in the $\kappa \to 0$ limit---is purely atomic, as opposed to the non-atomic dissipation measures of~\cite{armstrong_anomalous_2025,burczak_anomalous_2023,johansson_nontrivial_2023,johansson_anomalous_2024}. Physically the dissipation should be at least non-atomic. For a precise characterization of the dissipation measure for the flow $V$ that we construct, see Remark~\ref{rem:dissipation_measure}. Finally, asymptotic total dissipation is not even expected for generic fluid flows, so the main result is arguably a point against its physicalness. 

However, in some sense the velocity fields we construct exhibit the \textit{strongest possible passive scalar anomalous dissipation.} Due to the unique continuation result of~\cite{poon_unique_1996}, we cannot have total dissipation when $\kappa>0$: if $\theta_0 \ne 0,$ then $\|\theta^\kappa(1,\cdot)\|_{L^2}>0$. As such, it is not clear what a stronger anomalous dissipation result would be.

\subsection{Outline of contents}

In Section~\ref{sec:definition-outline}, we define the flow $V$ for which we prove Theorem~\ref{thm:total_dissipation}, as well as an intermediate flow $v$ used to construct $V$. Additionally, we provide a broad heuristic description of the argument followed by a much more precise outline of the proof. The remainder of the paper then serves to fill in the missing technical details of the argument. In Section~\ref{sec:two-cell}, we prove the special case of Theorem~\ref{thm:total_dissipation} given by Theorem~\ref{thm:two-cell-dissipation} and in Section~\ref{sec:universal}, we prove Theorem~\ref{thm:total_dissipation}. In Appendix~\ref{appendix:stability}, we prove all the stability estimates on drift-diffusion equations used as lemmas in Sections~\ref{sec:two-cell} and~\ref{sec:universal}. The arguments there rely on the stochastic representation of the drift-diffusion equation. Finally, in Appendix~\ref{appendix:regularity}, we provide the computational verification of the regularity of the constructed flows.

\subsection{Acknowledgments}
We would like to thank Theodore Drivas for pointing out the maximal spreading of the stochastic trajectories noted in Subsection~\ref{subsec:spread}. 

EHC was partially supported by NSF grant DMS-2342349. KR was partially supported by the NSF Collaborative Research Grant DMS-2307681 and the Simons Foundation through the Simons Investigators program. This paper was written in part during KR's visit to Carnegie Mellon University, made possible by the Frontiers in Applied Analysis RTG funded by NSF grant DMS-2342349.

\section{Definition of flows and proof outline}

\label{sec:definition-outline}

Let us first provide a highly heuristic description of the argument. Then we will go through the argument again, explaining how to deal with the technical problems that arise in making the heuristic precise. In the process, we give the precise definitions of the flows under study. We additionally note---though only provide the heuristic argument---an interesting property of the vector field $V$ in Subsection~\ref{subsec:spread}. We also provide the proof of Corollary~\ref{cor:other-dissipations} at the end of this section.

\subsection{Heuristic argument}

The argument proceeds in two major steps. The first is building a \textit{two-cell dissipator} which asymptotically totally dissipates special initial data: the \textit{two-cell data}. The two-cell data is given on the box $B:= [0,\sqrt{2}]\times [0,1]$ by $\indc_{\{x <\sqrt{2}/2\}}$. The action of the two-cell dissipator, in the $\kappa \to 0$ limit, is to then take that two-cell data at time $0$ to the constant function $\tfrac{1}{2}$ at time $1$, as illustrated in Figure~\ref{fig:two-cell}.

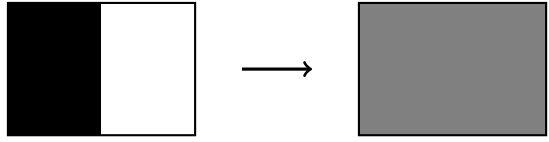
\begin{figure}[htbp]
    \centering
\begin{tikzpicture}

\def\width{0.88*0.3535}  
\def\height{0.88*0.5}        

\fill[black!100!white] (33*\width, 0) rectangle (37*\width,4*\height);

\fill[black!0!white] (37*\width, 0) rectangle (41*\width,4*\height);

\draw[thick] (33*\width, 0) rectangle (41*\width, 4*\height);

\draw[->,very thick,black] (43*\width,2*\height) -- (46*\width,2*\height);


\fill[black!50!white] (48*\width, 0) rectangle (56*\width,4*\height);

\draw[thick] (48*\width, 0) rectangle (56*\width, 4*\height);

\end{tikzpicture}
    \caption{The two-cell dissipator.}
    \label{fig:two-cell}
\end{figure}

The construction of the two-cell dissipator is based on the perfect mixing example of~\cite{alberti_exponential_2019}. Their example is readily modified to construct a divergence-free flow that is $C^\infty_{loc,t,x}([0,1) \times B) \cap L^\infty([0,1], C^\alpha(B))$ that---when $\kappa=0$---takes the two-cell data at time $0$ to the constant function at time $1$, where we are using that the flow is smooth away from the final time, so the transport equation is well defined. Proving that this perfect mixing example is robust under perturbation by $\kappa \Delta$ so as to asymptotically totally dissipate requires carefully analyzing the almost self-similar and geometric structure of the flow. We defer this analysis for now.

With the two-cell dissipator in hand, we now discuss how we build the \textit{universal total dissipator}. The central observation is that if our initial data was piecewise constant on dyadic boxes of scale $2^{-n}$, then by applying the two-cell dissipator---rescaled and tiled---we can flow the data under the equation to be essentially piecewise constant on boxes on the larger scale $2^{-n+1}$. That is \textit{using the two-cell dissipator, we can essentially take piecewise constant data on a small scale to a piecewise constant function on a larger scale.} We can then iterate this construction, taking data that is piecewise constant on scale $2^{-n} \to 2^{-n+1} \to \cdots \to 1$, as is shown in Figure~\ref{fig:total-dissipator}. Additionally, by taking the space-time rescaling to preserve the $L^\infty_tC^\alpha_x$ norm of the flow, we can make each smaller stage of the flow take geometrically less time. As such, the flow illustrated by Figure~\ref{fig:total-dissipator} takes unit time, independent of how large $n$ is.

\begin{figure}[htbp]
    \centering
\begin{tikzpicture}

\def\width{0.88*0.3535}  
\def\height{0.88*0.5}  


\draw[white,fill=white] (0,2*\height) circle (0.01);

\draw[black,fill=black] (3*\width,2*\height) circle (0.05);

\draw[black,fill=black] (4*\width,2*\height) circle (0.05);

\draw[black,fill=black] (5*\width,2*\height) circle (0.05);

\draw[->,very thick,black] (9*\width,2*\height) -- (10*\width,2*\height);


\fill[black!25!white] (11*\width, 0) rectangle (13*\width, 2*\height);

\fill[black!45!white] (13*\width, 0) rectangle (15*\width, 2*\height);

\fill[black!100!white] (11*\width, 2*\height) rectangle (13*\width, 4*\height);

\fill[black!70!white] (13*\width, 2*\height) rectangle (15*\width, 4*\height);

\fill[black!20!white] (15*\width, 0) rectangle (17*\width, 2*\height);

\fill[black!10!white] (17*\width, 0) rectangle (19*\width, 2*\height);

\fill[black!10!white] (15*\width, 2*\height) rectangle (17*\width, 4*\height);

\fill[black!0!white] (17*\width, 2*\height) rectangle (19*\width, 4*\height);

\draw[thick] (11*\width, 0) rectangle (19*\width, 4*\height);

\draw[<->,thick,red] (12*\width,1*\height) -- (14*\width,1*\height);

\draw[<->,thick,red] (16*\width,1*\height) -- (18*\width,1*\height);

\draw[<->,thick,red] (12*\width,3*\height) -- (14*\width,3*\height);

\draw[<->,thick,red] (16*\width,3*\height) -- (18*\width,3*\height);

\draw[->,very thick,black] (20*\width,2*\height) -- (21*\width,2*\height);


\fill[black!35!white] (22*\width, 0) rectangle (26*\width, 2*\height);

\fill[black!85!white] (22*\width, 2*\height) rectangle (26*\width, 4*\height);

\fill[black!15!white] (26*\width,0) rectangle (30*\width, 2*\height);

\fill[black!5!white] (26*\width,2*\height) rectangle (30*\width, 4*\height);

\draw[thick] (22*\width, 0) rectangle (30*\width, 4*\height);

\draw[<->,very thick,red] (24*\width,1*\height) -- (24*\width,3*\height);

\draw[<->,very thick,red] (28*\width,1*\height) -- (28*\width,3*\height);

\draw[->,very thick,black] (31*\width,2*\height) -- (32*\width,2*\height);


\fill[black!60!white] (33*\width, 0) rectangle (37*\width,4*\height);

\fill[black!10!white] (37*\width, 0) rectangle (41*\width,4*\height);

\draw[thick] (33*\width, 0) rectangle (41*\width, 4*\height);

\draw[<->,ultra thick,red] (35*\width, 2*\height) -- (39*\width,2*\height);

\draw[->,very thick,black] (42*\width,2*\height) -- (43*\width,2*\height);


\fill[black!35!white] (44*\width, 0) rectangle (52*\width,4*\height);

\draw[thick] (44*\width, 0) rectangle (52*\width, 4*\height);

\end{tikzpicture}
    \caption{The universal total dissipator. The red arrows denote the action of the two-cell dissipator.}
    \label{fig:total-dissipator}
\end{figure}
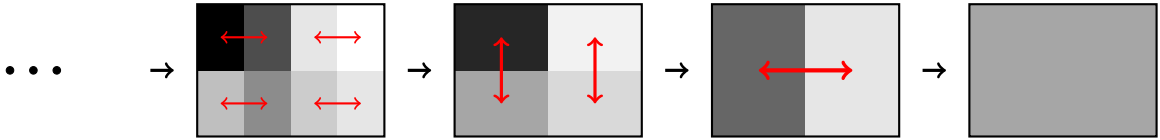

By then taking $n \to \infty$, we can take data that is piecewise constant on scale $2^{-\infty} =0$ to a constant function on scale $1$, all in the unit time interval. Since every function is piecewise constant on scale $0$, this says we can totally dissipate any data.

\begin{remark}[Dissipation measure]
\label{rem:dissipation_measure}

The dissipation measure is the (weak) limit as $\kappa \searrow 0$ of the energy dissipation for the drift-diffusion equation~\eqref{eq:intro_drift_diffusion_equation}, which utilizing~\eqref{eq:energy-identity} is given as
\[\lim_{\kappa \to 0} \kappa \int |\nabla \theta^\kappa|^2(t,x)\,dx =: \mathscr{E}(dt),\]
where the limiting object $\mathscr{E}(dt)$ is generically only a measure. In~\cite{johansson_nontrivial_2023,johansson_anomalous_2024} they construct anomalously dissipating flows for which the (subsequential) limiting object $\mathscr{E}(dt) \in L^\infty([0,1])$ and in~\cite{armstrong_anomalous_2025,burczak_anomalous_2023} they have $\mathscr{E}(dt)$ is nonatomic, but potentially not absolutely continuous with respect to Lebesgue measure. In the examples built on mixing flows, such as~\cite{drivas_anomalous_2022,colombo_anomalous_2023,elgindi_norm_2024}, $\mathscr{E}(dt) = c_{\theta_0} \delta_{1}(dt)$. That is, all dissipation happens at the final time (in the $\kappa \to 0$ limit). Since our two-cell dissipator is built on a mixing example, it will be clear from the proof below that $\mathscr{E}^{\text{two-cell}}(dt) = \delta_1(dt)$ for the two-cell initial data. Then, as described above, the universal total dissipator is built by iterating the two-cell dissipating example. As such, one can readily see from the proof that for the universal total dissipator
\[\mathscr{E}(dt) = \sum_j c^j_{\theta_0}\delta_{t_j}(dt),\]
where $t_j$ are a sequence of geometrically separated times decreasing to $\tfrac{1}{2}$---see Definition~\ref{def:V}---and the $c^j_{\theta_0}$ are explicitly computable constants that give the difference of the $L^2$ energy of $\theta_0$ averaged on grid cells of scale $2^{-(j+1)/2}$ and $\theta_0$ averaged on grid cells of scale $2^{-j/2}$.

Therefore, the dissipation measure is purely atomic, as stated in Subsection~\ref{subsec:contribs}. It is however somewhat more diffuse than the dissipation measures for anomalously dissipating flows built purely on mixing examples, though it is nowhere near as diffuse as the dissipation measure given in e.g.~\cite{johansson_anomalous_2024}. This is nonphysical as we expect the dissipation of generic turbulent fluids to be continuous in time.
\end{remark}

\subsection{Constructing the two-cell dissipator}

\label{subsec:two-cell-sketch}

Let us remark on the conventions we will use throughout the paper. We choose to always work on the box $[0,\sqrt{2}] \times [0,1]$ as it is self-similar under being split in half, that is it has two halves that are congruent to the original box. This somewhat simplifies the construction of the universal total dissipator.

\begin{definition}\label{def:box}
We let $B:=[0,\sqrt{2}] \times [0,1]$ and take $\T^2$ to be $B$ with opposite sides identified.  
\end{definition}

We now introduce notation for the solution operator to the drift-diffusion equation.

\begin{definition}
\label{def:sol-operator}
    Given a vector field $u\in L^\infty([0,\infty)\times B)$, boundary data $f \in L^\infty([0,\infty) \times \partial B)$ and $\kappa> 0$, for any $0 \leq s \leq t$, $\sol_{s,t}^{u,\kappa,f} : TV(B) \to L^1(B)$ denotes the solution operator to
    \begin{equation}\label{eq:drift_diff_equation}
    \begin{cases}
    \partial_t \theta - \kappa \Delta \theta + u \cdot \nabla \theta=0&[s,t]\times B,
    \\ \theta(\cdot,x) = f(\cdot,x)&x\in \partial B.
    \end{cases}
    \end{equation}
    For $\kappa=0$, we use $\sol^{u,0,f}_{s,t}$ to denote the solution to~\eqref{eq:drift_diff_equation}, though in this case we require $u \in L^\infty([0,\infty), W^{1,\infty}(B))$.
    If $u$ is tangent to $\partial B$, then $\sol_{s,t}^u:=\sol_{s,t}^{u,0,f}$ for all boundary data $f$. We denote $\sol^{u,\kappa,\T^2}_{s,t}$ the solution operator to the problem with periodic boundary data.
\end{definition}

Our goal for the two-cell dissipator is to prove the following result.

\begin{theorem}
\label{thm:two-cell-dissipation}
For $\alpha \in(0,1)$, let $v \in L^\infty([0,1], C^\alpha(B)) \cap C^{\frac{\alpha}{1-\alpha}}([0,1], L^\infty(B))$ be defined below by~\eqref{eq:v-def}. Then there exists $C(\alpha)>0$ such that for all boundary data $f\in L^\infty([0,1]\times\partial B)$ and $\kappa>0$, we have the estimate
\[\big\|\sol_{0,1}^{v,\kappa,f}\Theta_0-\tfrac{1}{2}\big\|_{L^1(B)}\leq C\big(1 +\|f\|_{L^\infty([0,1]\times\partial B)}\big)\kappa^{(1-\alpha)/12},\]
where $\Theta_0(x,y) := \indc_{\{x < \frac{\sqrt{2}}{2}\}}.$
\end{theorem}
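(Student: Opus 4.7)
The plan is to construct $v$ as a self-similar infinite iteration of a base ``cell-shuffling'' flow for which, in the inviscid limit, $\sol^v_{0,t}\Theta_0$ weakly converges to the constant $\tfrac{1}{2}$ as $t \to 1^-$, and then to show that diffusion preserves this convergence with a quantitative rate. Specifically, I would fix a smooth, divergence-free, boundary-tangent base flow $u$ supported on a short time interval $[0,\tau_0]$ that moves $\Theta_0$ to a function piecewise constant on finitely many sub-rectangles of $B$, each similar to $B$ and each carrying a rotated and rescaled copy of $\Theta_0$. I would then define $v$ recursively by placing, inside each such sub-rectangle, a rescaled copy of $v$ itself under the space-time scaling $(t,x) \mapsto (\ell^{1-\alpha}t,\ell x)$ that exactly preserves the $L^\infty_t C^\alpha_x$ seminorm. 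Because the resulting geometric series of durations converges, after normalization $v$ lives on $[0,1]$ and is smooth away from $t = 1$; the regularity $C^{\alpha/(1-\alpha)}_t L^\infty_x$ follows from the fact that the characteristic speed at stage $n$ is $\ell_n/\tau_n = \ell_n^\alpha$. A direct induction shows that at the sequence $t_n \nearrow 1$ terminating stage $n$, $\sol^v_{0,t_n}\Theta_0$ is a $\{0,1\}$-valued function piecewise constant on exponentially many alternating-value sub-rectangles, and hence converges weakly to $\tfrac{1}{2}$.

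For the viscous problem I would compare $\sol^{v,\kappa,f}_{0,t_n}\Theta_0$ stage by stage to the inviscid evolution by telescoping and applying the stability lemmas of Appendix~\ref{appendix:stability}. On stage $n$, the flow acts at spatial scale $\ell_n \sim 2^{-n/2}$ for duration $\tau_n \sim \ell_n^{1-\alpha}$, so the viscous error per stage is controlled by a positive power of the dimensionless ratio $\kappa\tau_n/\ell_n^{2} \sim \kappa\,\ell_n^{-(1+\alpha)}$. Stopping the comparison at the largest $N$ for which this quantity is still small, so that $\ell_N$ sits near the Batchelor scale $\kappa^{1/(1+\alpha)}$, the accumulated viscous error up to $t_N$ remains a positive power of $\kappa$; the boundary data $f$ enters only through maximum-principle considerations (it does not affect the inviscid interior dynamics since $v$ is tangent to $\partial B$, and its diffusive penetration is $O(\sqrt{\kappa}\|f\|_{L^\infty})$), giving the $(1 + \|f\|_{L^\infty})$ prefactor. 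On the remaining interval $[t_N,1]$, the inviscid solution is piecewise constant at scale $\ell_N$ and hence differs from its mean $\tfrac{1}{2}$ in $H^{-1}$ by order $\ell_N$; combining this negative-regularity smallness with heat-semigroup smoothing, together with the continued mixing action of $v$ past $t_N$, forces the $L^1$ distance to $\tfrac{1}{2}$ down to $\kappa^{(1-\alpha)/12}$.

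The main obstacle will be propagating the stability estimate across the countably many self-similar stages with only acceptable (at most logarithmic) accumulation per stage, so that the telescoping sum remains a positive power of $\kappa$. The key is to measure the viscous perturbation in a norm compatible with the self-similar structure: naive $L^2$-to-$L^2$ bounds at unit scale blow up under each rescaling, whereas stability estimates phrased in terms of the negative-regularity smallness of $\sol^v_{0,t_n}\Theta_0 - \tfrac{1}{2}$ survive the iteration. The precise exponent $(1-\alpha)/12$ would then emerge from optimizing the cutoff $N$ against the interpolation losses incurred in converting the natural $L^2$-type stability estimates into the $L^1$ statement appropriate for $TV$ initial data.
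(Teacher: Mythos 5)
Your overall architecture matches the paper's quite closely: a quasi-self-similar mixing flow with stage scales $\ell_j$ and durations $\tau_j\sim\ell_j^{1-\alpha}$ chosen to preserve $L^\infty_tC^\alpha_x$; a telescoping, stage-by-stage comparison of the viscous and inviscid evolutions in which the per-stage error is (perimeter of the transported level set)$\times(\kappa\tau_j)^{1/2}$, i.e.\ a power of your dimensionless ratio $\kappa\tau_j/\ell_j^2$; a cutoff stage $N$ with $\ell_N$ below the diffusive scale; and a final step converting mean-zero-per-box (negative-regularity) smallness at scale $\ell_N$ into $L^1$ smallness via a $\kappa^{-1/2}$ heat-kernel gradient bound. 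The treatment of the boundary data via hitting probabilities of the stochastic characteristics is also the mechanism the paper uses.

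The genuine gap is in your endgame on $[t_N,1]$. First, you track box averages at the cutoff scale $\ell_N$ itself, but the residual drift displaces particles by $\int_{t_N}^1\|v\|_{L^\infty}\,dt\sim\sum_{j\ge N}\ell_j\sim\ell_N$, which is exactly the box sidelength; so the averages on boxes of scale $\ell_N$ are \emph{not} approximately preserved, and the claimed $H^{-1}$ smallness of order $\ell_N$ does not survive to time $1$ as stated. The paper resolves this by introducing an intermediate scale $\ell_M$ with $\ell_N\ll\ell_M\ll\kappa^{1/2}$ and proving (Proposition~\ref{prop:advection-noise-small}, via Lemma~\ref{lem:mean-preservation}) that only the coarser averages at scale $\ell_M$ are preserved; your plan has no analogue of this second, cruder stage. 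Second, ``heat-semigroup smoothing together with the continued mixing action of $v$ past $t_N$'' is not an argument: the free heat semigroup is not the solution operator while the drift is active, and near $t=1$ your drift is at its most singular. The paper sidesteps this entirely by building $v$ so that it vanishes identically on $[\tfrac12,1]$, reserving a genuine pure-diffusion window in which the explicit heat kernel (plus a Poincar\'e inequality at scale $\ell_M$) closes the estimate. Without a rest window, or a quantitative substitute for it, your final reduction from negative-regularity smallness to the $L^1$ bound does not go through.
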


We make the following observations about Theorem~\ref{thm:two-cell-dissipation} that may be helpful to keep in mind. 

\begin{itemize}
    \item We are constructing a vector field for each $\alpha \in (0,1)$ that is spatially $C^\alpha$. The spatial regularity is by construction, but the time regularity \text{comes from} a computation. It is interesting to note that the time regularity $\frac{\alpha}{1-\alpha}\to \infty$ as $\alpha \to 1.$
    \item For each $\alpha \in (0,1)$, we are getting an algebraic rate in $\kappa$ that degenerates to $0$ as $\alpha \to 1.$ This is sensible as we cannot have anomalous dissipation with a spatially Lipschitz vector field.
    \item We are solving the advection-diffusion equation as a boundary value problem on the box---as opposed to the problem on the torus which lacks spatial boundary. This is because when we tile the two-cell dissipators to make the universal total dissipator, we have to deal with the lack of spatial periodicity on small scales. That is, we have to deal with the interaction across different ``tiles''. We see from the statement of Theorem~\ref{thm:two-cell-dissipation} that we are treating the boundary data essentially as pure error.
\end{itemize}

\subsubsection{\texorpdfstring{The~\cite{alberti_exponential_2019} perfect mixing flow}{The [ACM19] perfect mixing flow}}

The starting place for our perfect mixing flow is a self-similar mixer. \textit{Note however that this is not the flow we use, as one cannot get self-similar mixing with a Lipschitz vector field.} A self-similar mixing flow takes some initial data at time $0$---in Figure~\ref{fig:self-similar} an indicator on a circle---and periodizes the data at time $1$, flowing under the transport equation $(\kappa =0)$. The flow is then periodized in space---which preserves Lipschitz norms and contracts $L^\infty$ norms by a factor of $2$---on the time interval $[1,2]$. The action of this flow on the solution is then to again periodize the solution: from being $2^{-1}$ periodic to being $2^{-2}$ periodic. Iterating this construction, we get a flow that has constant spatial Lipschitz norm, has $L^\infty_x$ norm $\approx 2^{-n}$ on the time interval $[n,n+1]$, and takes a specific initial data (indicator on a circle) to the $2^{-n}$ periodized version of the data at time $n$. Note that at time $n$, on each grid cell of scale $2^{-n}$, exactly half of the cell is colored black and half is colored white.

\begin{figure}[htbp]
    \centering
\begin{tikzpicture}

\def\width{2}  

\draw[thick] (0,0) rectangle (\width,\width);
\fill[black!100!white] (\width/2, \width/2) circle (\width/2.50662827463);
\draw[->, very thick,black] (1.33*\width, \width/2) -- (1.66*\width, \width/2);
\draw[thick] (2*\width,0) rectangle (3*\width,\width);
\fill[black!100!white] (\width/4+2*\width, \width/4) circle (0.5*0.3989422804*\width);
\fill[black!100!white] (3*\width/4+2*\width, \width/4) circle (0.5*0.3989422804*\width);
\fill[black!100!white] (\width/4+2*\width, 3*\width/4) circle (0.5*0.3989422804*\width);
\fill[black!100!white] (3*\width/4+2*\width, 3*\width/4) circle (0.5*0.3989422804*\width);

\draw[->, very thick,black] (3.33*\width, \width/2) -- (3.66*\width, \width/2);

\draw[thick] (4*\width,0) rectangle (5*\width,\width);

\fill[black!100!white] (\width/8+4*\width, \width/8) circle (0.5*0.5*0.3989422804*\width);
\fill[black!100!white] (3*\width/8+4*\width, \width/8) circle (0.5*0.5*0.3989422804*\width);
\fill[black!100!white] (5*\width/8+4*\width, \width/8) circle (0.5*0.5*0.3989422804*\width);
\fill[black!100!white] (7*\width/8+4*\width, \width/8) circle (0.5*0.5*0.3989422804*\width);

\fill[black!100!white] (\width/8+4*\width, 3*\width/8) circle (0.5*0.5*0.3989422804*\width);
\fill[black!100!white] (3*\width/8+4*\width, 3*\width/8) circle (0.5*0.5*0.3989422804*\width);
\fill[black!100!white] (5*\width/8+4*\width, 3*\width/8) circle (0.5*0.5*0.3989422804*\width);
\fill[black!100!white] (7*\width/8+4*\width, 3*\width/8) circle (0.5*0.5*0.3989422804*\width);

\fill[black!100!white] (\width/8+4*\width, 5*\width/8) circle (0.5*0.5*0.3989422804*\width);
\fill[black!100!white] (3*\width/8+4*\width, 5*\width/8) circle (0.5*0.5*0.3989422804*\width);
\fill[black!100!white] (5*\width/8+4*\width, 5*\width/8) circle (0.5*0.5*0.3989422804*\width);
\fill[black!100!white] (7*\width/8+4*\width, 5*\width/8) circle (0.5*0.5*0.3989422804*\width);

\fill[black!100!white] (\width/8+4*\width, 7*\width/8) circle (0.5*0.5*0.3989422804*\width);
\fill[black!100!white] (3*\width/8+4*\width, 7*\width/8) circle (0.5*0.5*0.3989422804*\width);
\fill[black!100!white] (5*\width/8+4*\width, 7*\width/8) circle (0.5*0.5*0.3989422804*\width);
\fill[black!100!white] (7*\width/8+4*\width, 7*\width/8) circle (0.5*0.5*0.3989422804*\width);

\draw[->, very thick,black] (5.33*\width, \width/2) -- (5.66*\width, \width/2);

\fill[black!100!white] (6.33*\width-0.12*\width, \width/2) circle (0.03*\width);
\fill[black!100!white] (6.33*\width, \width/2) circle (0.03*\width);
\fill[black!100!white] (6.33*\width+0.12*\width, \width/2) circle (0.03*\width);







\end{tikzpicture}
    \caption{A self-similar mixing flow. The arrows represent the flow of the solution under the transport equation for unit time.}
    \label{fig:self-similar}
\end{figure}
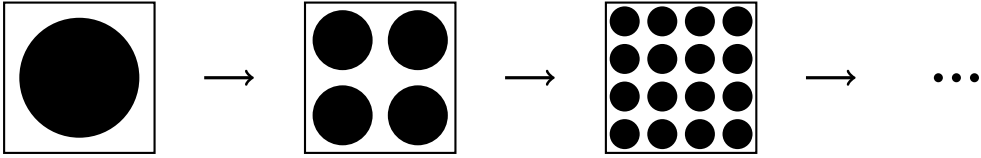

In~\cite[Section 5]{alberti_exponential_2019}, they construct a flow that does exactly what is described above. However, this flow fails to be spatially Lipschitz, as is apparent due to the topological changes in the level sets illustrated in Figure~\ref{fig:self-similar}. We want the flow to have good spatial regularity in order to show that the effect of the diffusion is appropriately small. We thus instead use the construction of~\cite[Section 8]{alberti_exponential_2019}. In particular, we want to use the below result, which is readily extracted from their construction. First, we introduce the following notation to decompose the box $B$ so that $B$ is the (essentially) disjoint union of the scale $5^{-n}$ sub-boxes $(B_n + x_n)_{x_n \in \Pi_n}$.

\begin{definition}
    Let 
    \begin{align*}
    B_n&:=5^{-n}B,\\
    \Pi_n &:=\Big\{5^{-n}(\sqrt{2} k,\ell):0\leq k,\ell\leq 5^n-1\Big\}.
    \end{align*}
\end{definition}

\begin{theorem}[\cite{alberti_exponential_2019}]
\label{thm:ACM_field}
There exists a divergence free vector field $U\in C^\infty\big([0,\infty),W^{1,\infty}(B)\big)$ tangential to $\partial B$ and subsets $E_n\subseteq B$ for $n\geq 0$ so that the following properties hold.
\begin{enumerate}
\item\label{item:dies} $\partial_t^k U(n,\cdot) =0$ for all $k,n\geq 0,$ 
\item\label{item:periodic} $U$ extends to a continuous periodic function on $\T^2$,
\item\label{item:box_averages} $|E_n\cap(B_n+x_n)| = \tfrac{1}{2}|B_n| =\frac{\sqrt{2}}{2}5^{-2n}$ for all $n\geq 0$ and $x_n\in \Pi_n$,
\item\label{item:boundary_curves} There exists $C>0$ so that $\partial E_n$ can be parametrized by a Lipschitz curve with length less than $C5^j$ for all $n\geq0$,
\item\label{item:set_mapping}Letting $\Theta_n:=\indc_{E_n}$, it holds that $\sol^U_{0,n}\Theta_0=\Theta_n,$
\item\label{item:L^infty_bound} $\|U\|_{C^k([n,n+1],L^\infty(B))}\leq C_k5^{-n}$ for all $n,k \geq 0$,
\item $E_0=\Big\{(x,y) \in B:x<\frac{\sqrt{2}}{2}\Big\}.$ \label{item:E0}
\end{enumerate}
\end{theorem}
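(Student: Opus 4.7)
The strategy is to obtain $U$ from the Section 8 perfect-mixing flow of \cite{alberti_exponential_2019} by isolating a single base mixer on the unit time interval and then producing $U$ by self-similar rescaling and tiling over each integer interval $[n, n+1]$. All seven listed properties should follow from corresponding properties of the base mixer together with the $5^{-n}$ amplitude prefactor.

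For the base step, I would extract from \cite[Section 8]{alberti_exponential_2019} a divergence-free, boundary-tangential, spatially Lipschitz vector field $u \in C^\infty([0,1], W^{1,\infty}(B))$ together with a subset $E_1 \subset B$ satisfying: (a) $\sol^u_{0,1}\indc_{E_0} = \indc_{E_1}$; (b) $|E_1 \cap (B_1 + x_1)| = \tfrac{1}{2}|B_1|$ for every $x_1 \in \Pi_1$; and (c) $\partial E_1$ is parametrized by a Lipschitz curve of length $\lesssim 1$. Composing $u$ with a smooth time reparametrization that is flat at $t = 0,1$ produces a mixer with vanishing endpoint time derivatives, and a divergence-free, tangential, boundary-supported correction can be added so that $u$ extends continuously and periodically to $\T^2$; neither modification disturbs (a)--(c). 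I would then define, for $t \in [n,n+1]$ and $x \in B$,
\[U(t,x) := 5^{-n}\, u\bigl(t - n,\; 5^n x \bmod B\bigr),\]
and set $E_n$ inductively by replacing, inside $E_{n-1}$, each scale-$5^{-n+1}$ ``black'' cell with a rescaled translated copy of $E_1$ and each ``white'' cell with the rescaled complement.

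The seven listed properties are then verified as follows. The smoothness of $U$ in time across integer $n$ with all time derivatives vanishing is the flat-endpoint property of $u$; periodicity on $\T^2$ and tangentiality to $\partial B$ are inherited from $u$. The half-filling of every scale-$5^{-n}$ cell follows inductively from (b) applied inside each rescaled cell, and the bound $|\partial E_n| \lesssim 5^n$ follows because $\partial E_n$ is a union of $5^{2n}$ rescaled copies of $\partial E_1$, each of length $\lesssim 5^{-n}$. The identity $\sol^U_{0,n}\Theta_0 = \Theta_n$ follows by induction on $n$: the scaling makes $\sol^U_{n-1,n}$ act on each scale-$5^{-n+1}$ cell as a rescaled copy of $\sol^u_{0,1}$, and (a) then sends the rescaled $\indc_{E_0}$ (or its complement) to the rescaled $\indc_{E_1}$ (or its complement). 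The $C^k_t L^\infty_x$ bound on $[n,n+1]$ is a direct consequence of the $5^{-n}$ prefactor combined with the smoothness in time of $u$, and the identity $E_0 = \{x < \sqrt{2}/2\}$ is hard-coded into the base step.

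The main obstacle is the base step: producing a mixer that is simultaneously Lipschitz in space, smooth in time with flat endpoints, tangent to $\partial B$ with a continuous periodic extension, and exactly mixes $\indc_{E_0}$ to $\indc_{E_1}$ with $\partial E_1$ a Lipschitz curve of bounded length. The mixing geometry is the structural content of \cite[Section 8]{alberti_exponential_2019}; the regularity and boundary matching are standard cutoff-and-correction adjustments that do not disturb the geometry. This packaging is what the ``readily extracted'' qualifier in the statement is referring to.
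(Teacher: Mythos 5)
There is a genuine gap, and it sits exactly at the point the paper itself flags as the crux: your induction is the \emph{exactly} self-similar construction (the Section 5 construction of \cite{alberti_exponential_2019}, Figure~\ref{fig:self-similar} here), which cannot be realized by a spatially Lipschitz flow, whereas the theorem requires $U\in C^\infty([0,\infty),W^{1,\infty}(B))$. Concretely, for your inductive step to close with $U(t,x)=5^{-n}u(t-n,5^nx\bmod B)$, you need $\Theta_n$ restricted to each cell $B_n+x_n$ to be a rescaled translated copy of $\Theta_0$ or of $1-\Theta_0$ \emph{in the standard orientation}, since that is the only data the tiled copy of $u$ knows how to mix. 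Already at $n=1$ this forces every cell $B_1+x_1$ to be cut by its vertical midline, so $\partial E_1\cap\operatorname{int}B$ contains five disjoint full-height vertical segments, each with both endpoints on $\partial B$. But $E_1$ is the image of $E_0$ under the time-one flow map of a Lipschitz field tangent to $\partial B$, i.e.\ under a homeomorphism of $B$ preserving $\partial B$; hence $\partial E_1\cap\operatorname{int}B$ must be a single arc with exactly two endpoints on $\partial B$, like $\partial E_0\cap\operatorname{int}B$. So either $E_1$ has the per-cell structure your induction needs (impossible for Lipschitz $u$), or $E_1$ is a connected snake merely \emph{half-filling} each cell in measure (achievable, and what \cite{alberti_exponential_2019} build) --- in which case $\Theta_1$ restricted to a cell is a snake piece, not a rescaled $\Theta_0$, and your tiled field on $[1,2]$ does nothing useful to it. This is precisely why the Section 8 construction is only ``quasi-self-similar'': the object that is reproduced under rescaling is a cell-crossing tube configuration with matched entry/exit data across cell boundaries, not the half-plane, and the base mixer is built around that configuration. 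Relatedly, your inductive definition of $E_n$ (``replace each black cell\ldots'') does not parse against Item~\ref{item:box_averages}, which says every cell at every scale is exactly half black, so there are no monochromatic cells to replace.

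The paper does not attempt this reconstruction: it imports the Section 8 field and sets wholesale and records only two adjustments, namely (i) that tracking $\indc_{E_0}$ rather than the tubes of \cite{alberti_exponential_2019} changes nothing because both evolutions are determined by the same boundary curves $\Gamma_1(t),\Gamma_2(t)$, and (ii) that one prepends the explicit flow of \cite[Figure 11(a)]{alberti_exponential_2019} to carry $\Theta_0$ to their initial configuration, which is what yields Item~\ref{item:E0}. If you want a self-contained proof, you must formulate the base step around the invariant tube configuration and its boundary-matching data rather than around $E_0$; the rescaling, tiling, flat-endpoint reparametrization, and the verifications of Items~\ref{item:dies},~\ref{item:periodic},~\ref{item:boundary_curves}, and~\ref{item:L^infty_bound} in your write-up are otherwise fine.
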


The flow given by Theorem~\ref{thm:ACM_field} takes the two-cell initial data $\Theta_0$ at time $0$ and flows it under the transport equation to be $\Theta_n := \indc_{E_n}$ at time $n$. We want that $E_n$ ``looks like'' a $5^{-n}$ periodization of the initial data. Because we want the flow to be spatially Lipschitz, this however cannot be exactly true, so~\cite{alberti_exponential_2019} use instead a ``quasi-self-similar'' construction. The important ``quasi-self-similar'' properties of the $E_n$ are contained in Items~\ref{item:box_averages} and~\ref{item:boundary_curves}. Item~\ref{item:box_averages} gives that $\Theta_n$ has the same average on every grid cell of size $5^{-n}$. Item~\ref{item:boundary_curves} is essentially a regularity statement on $\Theta_n$, which approximately gives that $E_n$ has the same regularity as a $5^{-n}$ periodization of the initial data. The important ``quasi-self-similar'' properties of $U$ are given by $\sup_t \|U(t,\cdot)\|_{W^{1,\infty}} = C <\infty$ and by Item~\ref{item:L^infty_bound}, which says that pointwise in space $U$ is order $5^{-n}$ on the time interval $[n,n+1]$. Finally, Items~\ref{item:dies} and~\ref{item:periodic} are just there for verifying regularity properties of the vector fields we construct. We recommend the reader view the figures of~\cite[Section 8]{alberti_exponential_2019} to get a geometric picture of their construction.

Let us briefly comment on how Theorem~\ref{thm:ACM_field} is implied by~\cite[Section 8]{alberti_exponential_2019}. Every item except Item~\ref{item:E0} is effectively direct from their construction and argument. One minor technical difference to note is that~\cite{alberti_exponential_2019} is tracking the flow of tubes under the transport equation while we consider the indicator of the half-plane $\Theta_0 := \indc_{E_0}$. It is easy to see however that this makes no difference, as in both cases the evolution is determined purely by the evolution of the boundary of the half-plane, denoted $\Gamma_1(t), \Gamma_2(t)$ in~\cite{alberti_exponential_2019}. Finally, there is the issue that the initial configuration in~\cite{alberti_exponential_2019} is not given by $\Theta_0$. It is however easy to add an additional flow to transform $\Theta_0$ into the initial configuration of~\cite{alberti_exponential_2019}. This flow is outlined precisely in~\cite[Figure 11(a)]{alberti_exponential_2019}.

\subsubsection{The two-cell dissipating flow}

We now rescale the vector field of~\cite{alberti_exponential_2019} as given by Theorem~\ref{thm:ACM_field} in time in such a way as to compress $[0,\infty)$ down to the finite interval $[0,1/2)$ while maintaining H\"older regularity. This then gives us the vector field with which we will prove Theorem~\ref{thm:two-cell-dissipation}. 
\begin{definition} 
 Define $\tau_j:=\frac{1}{2}\big(5^{(\alpha-1)j}-5^{(\alpha-1)j+1}\big)$ and $t_n:= \sum_{j=0}^{n-1} \tau_j.$ Then letting $U$ be as in Theorem~\ref{thm:ACM_field}, 
we define the divergence-free vector field $v : [0,1] \times B \to \R^2$ by
\begin{equation}
\label{eq:v-def}
v(t,x):=\begin{cases}
    \tau_n^{-1} U(\tau_n^{-1}(t-t_n)+n,x)& t\in[t_n,t_{n+1}],
\\0&t\geq 1/2.
\end{cases}
\end{equation}
\end{definition} 

We defer the computational verification of the following regularity of $v$ to Appendix~\ref{appendix:regularity}. 

\begin{lemma}
    \label{lem:v-props}
    The vector field $v$ defined by~\eqref{eq:v-def} has the following properties:
    \begin{enumerate}
        \item \label{item:v-regular} $v \in L^\infty([0,1], C^\alpha(B)) \cap C^{\frac{\alpha}{1-\alpha}}([0,1], L^\infty(B)),$
        \item \label{item:v-gluable}$\partial_t^k v(0,\cdot) = \partial_t^k v(1,\cdot) =0$ for all $k \geq 0$,
        \item \label{item:v-periodic} $v$ extends to a continuous periodic function on $\T^2$.
    \end{enumerate}
\end{lemma}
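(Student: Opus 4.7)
The plan is to verify the three items in turn; the main work is the regularity claim of Item~\ref{item:v-regular}, which is what justifies the precise choice of the scaling constants $\tau_n$. As a preliminary, one checks that $\tau_j \sim 5^{-(1-\alpha)j}$ geometrically with $\sum_{j=0}^\infty \tau_j = 1/2$, so $t_n \nearrow 1/2$ and the rescaled copies of $U$ exactly fill $[0,1/2)$.

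Item~\ref{item:v-gluable} follows from Item~\ref{item:dies} of Theorem~\ref{thm:ACM_field}: at $t=0$, $\partial_t^k v(0,x) = \tau_0^{-(k+1)} \partial_t^k U(0,x) = 0$; at $t=1$, $v$ is identically zero on $[1/2,1]$ by definition of~\eqref{eq:v-def}. The same vanishing of time derivatives of $U$ at integer points shows $v$ glues smoothly across each interior point $t_n$, and continuity of $v$ at $t = 1/2$ follows from the $L^\infty_x$-decay of $v$ as $n \to \infty$ computed below. Item~\ref{item:v-periodic} is immediate from Item~\ref{item:periodic} of Theorem~\ref{thm:ACM_field}, since each time slice $v(t,\cdot)$ is just a scalar multiple of some $U(s,\cdot)$.

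For Item~\ref{item:v-regular}, we work piece by piece. For the spatial H\"older bound, the interpolation $[f]_{C^\alpha} \lesssim \|f\|_{L^\infty}^{1-\alpha}\|\nabla f\|_{L^\infty}^\alpha$ together with $\|U(t,\cdot)\|_{L^\infty} \lesssim 5^{-n}$ on $[n,n+1]$ from Item~\ref{item:L^infty_bound} and the uniform Lipschitz bound $\sup_t\|\nabla U(t,\cdot)\|_{L^\infty} \lesssim 1$ built into the ACM construction yield $[U(t,\cdot)]_{C^\alpha} \lesssim 5^{-(1-\alpha)n}$, whence
\[[v(t,\cdot)]_{C^\alpha} \lesssim \tau_n^{-1}\cdot 5^{-(1-\alpha)n} \lesssim 1 \qquad \text{on } [t_n,t_{n+1}].\]
For the time regularity with $\gamma = \alpha/(1-\alpha)$, when $s,t$ lie in the same interval we interpolate between $\|v\|_{L^\infty_x} \lesssim 5^{-\alpha n}$ and $\|\partial_t v\|_{L^\infty_x} \lesssim \tau_n^{-2}\cdot 5^{-n} \lesssim 5^{(1-2\alpha)n}$ via the elementary bound $\min(A,B|t-s|) \leq A^{1-\gamma} B^\gamma |t-s|^\gamma$; a short computation shows the resulting exponent of $5^n$ is $-\alpha(1-\gamma) + (1-2\alpha)\gamma$, which vanishes precisely when $\gamma = \alpha/(1-\alpha)$. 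For $s,t$ in different intervals, the triangle inequality together with $v(t_n,\cdot) = 0$ reduces matters to the single-interval estimate.

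The delicate step will be this time-regularity interpolation: it is what forces the exponent $\gamma = \alpha/(1-\alpha)$ (rather than something larger) and explains why the time regularity degenerates as $\alpha \to 1$. Everything else is essentially bookkeeping that transfers the uniform-in-$n$ bounds of Theorem~\ref{thm:ACM_field} through the parabolic rescaling $t \mapsto \tau_n^{-1}(t-t_n)+n$.
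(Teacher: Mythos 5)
Your treatment of Items~\ref{item:v-gluable} and~\ref{item:v-periodic}, and of the spatial $L^\infty_t C^\alpha_x$ bound, matches the paper's argument (interpolation of $[U]_{C^\alpha}$ between $\|U\|_{L^\infty}\lesssim 5^{-n}$ and the uniform Lipschitz bound, then multiplication by $\tau_n^{-1}\sim 5^{(1-\alpha)n}$). Your cross-interval reduction via $\partial_t^k v(t_j,\cdot)=0$ at every breakpoint is also legitimate and in fact slightly cleaner than the paper's, which handles far-separated times by a decay argument instead.

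However, your time-regularity argument has a genuine gap for $\alpha>1/2$. The paper's convention (stated after Theorem~\ref{thm:total_dissipation}) is that $C^a$ denotes $C^{\floor{a},\,a-\floor{a}}$, so $C^{\frac{\alpha}{1-\alpha}}([0,1],L^\infty)$ requires, when $\frac{\alpha}{1-\alpha}>1$, bounding $k:=\floor{\frac{\alpha}{1-\alpha}}\geq 1$ full time derivatives of $v$ and then the $\gamma$-H\"older modulus of $\partial_t^k v$ with $\gamma=\frac{\alpha}{1-\alpha}-k\in[0,1)$. You instead treat $\frac{\alpha}{1-\alpha}$ as a single fractional exponent and interpolate between $\|v\|_{L^\infty_x}\lesssim 5^{-\alpha n}$ and $\|\partial_t v\|_{L^\infty_x}\lesssim 5^{(1-2\alpha)n}$ via $\min(A,B|t-s|)\leq A^{1-\gamma}B^\gamma|t-s|^\gamma$; this elementary bound is only meaningful for $\gamma\leq 1$, i.e.\ $\alpha\leq 1/2$. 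For $\alpha>1/2$ your estimate does not produce the claimed norm. The fix is the obvious generalization, which is what the paper does: write $\partial_t^k v=\tau_n^{-(k+1)}(\partial_t^k U)(\tau_n^{-1}(t-t_n)+n,\cdot)$, use $\|\partial_t^k U\|_{C^\gamma([n,n+1],L^\infty)}\leq C5^{-n}$ from Item~\ref{item:L^infty_bound}, and check that the total exponent $(1-\alpha)(k+1+\gamma)-1=(1-\alpha)\big(\tfrac{\alpha}{1-\alpha}+1\big)-1$ vanishes. Your formula for where the exponent cancels is the $k=0$ instance of this computation, so the repair is mechanical, but as written the proof does not cover the full range $\alpha\in(0,1)$.
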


Note that $v \in L^\infty([0,1], C^\alpha(B))$ even though we are compressing a Lipschitz flow on $[0,\infty)$ into a flow on $[0,1/2]$. This heavily utilizes that $\|U(t,\cdot)\|_{L^\infty}$ goes to $0$ quite rapidly in $t$ while the Lipschitz norm stays unit sized. The flow $v$ is now a finite time perfect mixer of the two-cell initial data, as by construction
\[\sol_{0,t_n}^v \Theta_0 = \sol_{0,n}^U \Theta_0 = \Theta_n\]
and so
\[\sol_{0,1/2}^v \Theta_0 = \lim_n \sol_{0,t_n}^v \Theta_0 = \lim_n \Theta_n = \tfrac{1}{2},\]
where for the final equality, we mean the limit in a weak (distributional) sense, and we use Theorem~\ref{thm:ACM_field}, Item~\ref{item:box_averages}.

As it is relevant in the following discussion, we now introduce the notation for averages of functions we will use throughout the paper.

\begin{definition} 
Given a function $f:B\rightarrow \R$, for any $A\subseteq B$
\[(f)_A:=\frac{1}{|A|}\int_A f(x)\,dx.\]
\end{definition}

\subsubsection{Sketch of the proof of Theorem~\ref{thm:two-cell-dissipation}}

Our goal now is to show that the perfect mixing of $v$ is robust to perturbation by diffusion so that we get asymptotic total dissipation of the two-cell initial data given by Theorem~\ref{thm:two-cell-dissipation}. The proof proceeds in three stages, keeping track of the evolution of the data under the advection-diffusion equation over the three time intervals: $[0,t_n], [t_n,\tfrac{1}{2}], $ and $[\tfrac{1}{2},1]$. Here $n$ will be chosen according to $\kappa$ ($n$ is given precisely in Proposition~\ref{prop:close-to-transport}).

On $[0,t_n]$, we want to argue that the solution to the advection-diffusion equation is close to the transport solution, that is
\[\sol_{0,t_n}^{v,\kappa,f} \Theta_0 \approx \Theta_n.\]
The reason this is true is that for this initial time interval, the velocity field $v$ is regular enough and the diffusion $\kappa>0$ is small enough that the diffusion makes only very small errors. This is easy to prove if we choose $n$ to grow quite slowly as $\kappa \searrow 0$. However, we want $5^{-n} \ll \kappa^{1/2}$, as the diffusion is only able to smooth over fluctuations on length scales below $\kappa^{1/2}$ in unit time. As such, we need to be somewhat careful in the analysis. Proposition~\ref{prop:close-to-transport} gives that $\sol_{0,t_n}^{v,\kappa,f} \Theta_0 \approx \Theta_n$ for $5^{-n} \approx \kappa^{\frac{3}{2(\alpha+2)}} \ll \kappa^{1/2}$. The proof of Proposition~\ref{prop:close-to-transport} is essentially elementary and relies on the estimate of stability under diffusion given by Lemma~\ref{lem:small_kappa_difference}. Lemma~\ref{lem:small_kappa_difference}---like the remaining drift-diffusion stability lemmas---is proved in Appendix~\ref{appendix:stability} using the stochastic representation of the drift-diffusion equation.

On $[t_n,\tfrac{1}{2}]$, we can no longer hope that the drift-diffusion equation stays close to the transport equation: eventually such small scales are developed that the diffusive term is dominant. We rely on a much cruder estimate. In order to get diffusion on the time interval $[\tfrac{1}{2},1],$ we require only that 
\[\big(\sol_{0,1/2}^{v,\kappa,f} \Theta_0\big)_{B_m+x_m} \approx\tfrac{1}{2},\qquad \forall x_m \in \Pi_m,\]
for some sufficiently large $m$. That is, we require that $\sol_{0,1/2}^{v,\kappa,f} \Theta_0$ has the right averages on a suitably small length scale. The action of the diffusion on $[\tfrac{1}{2},1]$ will then be to replace $\sol_{0,1/2}^{v,\kappa,f} \Theta_0$ with its average on this small scale.

As such, we do not need to show that the drift-diffusion solution stays close to the transport solution on $[t_n,\tfrac{1}{2}]$, but rather only that the averages on some suitably small scale do not change that much. Since we know that $\sol_{0,t_n}^{v,\kappa,f} \Theta_0 \approx \Theta_n$ and from Theorem~\ref{thm:ACM_field}, Item~\ref{item:box_averages} that 
\[\big(\Theta_n\big)_{B_n+x_n} =\tfrac{1}{2},\qquad \forall x_n \in \Pi_n,\]
we thus get that for all $m \leq n$, 
\[\big(\sol_{0,t_n}^{v,\kappa,f} \Theta_0\big)_{B_m+ x_m} \approx\tfrac{1}{2},\qquad \forall x_m \in \Pi_m.\]
We then choose $m$ so that $5^{-n} \ll 5^{-m} \ll \kappa^{1/2}$ (made precise in Proposition~\ref{prop:advection-noise-small}). Then we show that the average of the solution on boxes of size $5^{-m}$ does not change much on the time interval $[t_n,\tfrac{1}{2}]$. This argument relies essentially on the fact that the typical motion of a particle under the action of the diffusion together with the drift on the time interval $[t_n, \tfrac{1}{2}]$ is $\ll 5^{-m}$ and so the only a small amount of mass could enter or exit a box with sidelength $5^{-m}$. This is shown in Proposition~\ref{prop:advection-noise-small} and relies on the stability estimate given by Lemma~\ref{lem:mean-preservation}. 

The first two steps then give that for some $m$ such that $5^{-m} \ll \kappa^{1/2}$, we have that
\begin{equation}
\label{eq:averages-heuristic}
\big(\sol_{0,1/2}^{v,\kappa,f} \Theta_0\big)_{B_m+ x_m}\approx\tfrac{1}{2},\qquad \forall x_m \in \Pi_m.
\end{equation}
Then on the time interval $[\tfrac{1}{2},1]$, we have that $v =0$, so the solution evolves by pure diffusion. The diffusion essentially averages the solution to length scale $\kappa^{1/2}$, so together with~\eqref{eq:averages-heuristic}, we then get
\[ \sol_{0,1}^{v,\kappa,f} \Theta_0 \approx\tfrac{1}{2},\]
thus concluding the proof of Theorem~\ref{thm:two-cell-dissipation}. 










\subsection{Constructing the universal total dissipator}

\label{subsec:universal-sketch}

\subsubsection{The universal totally dissipating flow}

We now need to introduce notation for the decomposition of the box $B$ by repeatedly splitting it in half, illustrated in Figure~\ref{fig:split}.

\begin{definition}
Let
    \begin{align*}
    \mathcal{R} &:= \frac{1}{\sqrt{2}} \begin{pmatrix} 0 & -1 \\ 1 & 0\end{pmatrix},\\
    A_n&:=\mathcal{R}^nB,\\
    \Lambda_n&:=\mathcal{R}^n\Big\{(\sqrt{2}i,j):0\leq i\leq 2^{\lfloor\frac{n}{2}\rfloor},0\leq j\leq 2^{\lfloor\frac{n+1}{2}\rfloor}\Big\}.
    \end{align*}
\end{definition}

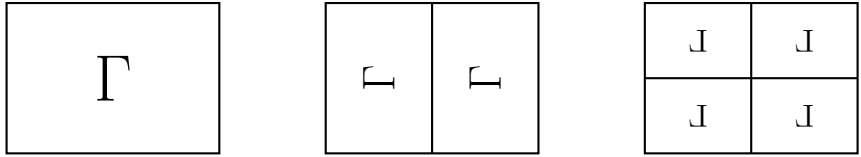
\begin{figure}[htbp]
    \centering
\begin{tikzpicture}

\def\width{2*1.4142}  
\def\height{2}        

\draw[thick] (0, 0) rectangle (\width, \height);

\node at (\width/2, \height/2) {\Huge $\Gamma$};

\draw[thick] (\width + \width/2, 0) rectangle (2*\width + \width/2, \height);
\draw[thick] (\width + \width/2 + \width/2, 0) -- (\width + \width/2 + \width/2, \height);

  \node[rotate=90] at (\width + \width/2 + \width/4, \height/2) {\LARGE $\Gamma$};

  \node[rotate=90] at (\width + \width/2 + 3*\width/4, \height/2) {\LARGE $\Gamma$};

\draw[thick] (3*\width, 0) rectangle (4*\width, \height);

\draw[thick] (3*\width+\width/2, 0) -- (3*\width+\width/2, \height);

\draw[thick] (3*\width,\height/2) -- (4*\width, \height/2);

\node[rotate=180] at (3*\width+\width/4, \height/4) {\large $\Gamma$};

\node[rotate=180] at (3*\width+\width/2+\width/4, \height/4) {\large $\Gamma$};

\node[rotate=180] at (3*\width+\width/4, \height/2+\height/4) {\large $\Gamma$};

\node[rotate=180] at (3*\width+\width/2+\width/4, \height/2+\height/4) {\large $\Gamma$};

\end{tikzpicture}
    \caption{The left box is $A_0 = B$. The middle box is made up of the collection of sub-boxes $(A_1+x_1)_{x_1\in \Lambda_1}$. The right box is made up of the collection of sub-boxes $(A_2+x_2)_{x_2\in \Lambda_2}$. The $\Gamma$ tracks how the orientation of the box transforms under repeated applications of $\mathcal{R}.$}
    \label{fig:split}    
\end{figure}

We can now define our universal totally dissipating flow $V$ as a space and time rescaling of the two-cell dissipating flow $v$.

\begin{definition}\label{def:V}
    Define $\sigma_j :=\frac{1}{2}\big(2^{(\alpha-1)j/2}-2^{(\alpha-1)(j+1)/2}\big)$ and $ s_n := \frac{1}{2} + \sum_{j=n}^\infty\sigma_j.$ Then letting $v$ be given by in~\eqref{eq:v-def} and identifying it with its periodic extension on $\R^2$, we define the divergence-free vector field $V: [0,1] \times \T^2 \to \R^2$ by
    \begin{equation}
        \label{eq:V-def}
        V(t,x) := 
        \begin{cases}
            0 & t \leq 1/2,\\
            \sigma_n^{-1}\mathcal{R}^{n}v(\sigma_n^{-1} (t-s_{n+1}), \mathcal{R}^{-n} x) & t \in [s_{n+1}, s_n].
        \end{cases}
    \end{equation}
\end{definition}

The computational verification of the following is directly analogous to that of Lemma~\ref{lem:v-props}. 

\begin{lemma}
    \label{lem:V-props}
    For $V$ defined by~\eqref{eq:V-def}, we have $V \in L^\infty([0,1], C^\alpha(\T^2)) \cap C^{\frac{\alpha}{1-\alpha}}([0,1], L^\infty(\T^2)).$
\end{lemma}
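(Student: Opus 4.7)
The plan is to directly exploit the space-time rescaling built into the definition of $V$. On each sub-interval $[s_{n+1}, s_n]$, write $V_n(t,x) := \sigma_n^{-1}\mathcal{R}^n v(\sigma_n^{-1}(t - s_{n+1}), \mathcal{R}^{-n} x)$. Since $\mathcal{R}$ is a $\pi/2$-rotation composed with multiplication by $1/\sqrt{2}$, the map $x \mapsto \mathcal{R}^{-n}x$ dilates distances by $2^{n/2}$, while premultiplying by $\mathcal{R}^n$ contracts vector magnitudes by $2^{-n/2}$. A direct change of variables then gives
\[
[V_n(t,\cdot)]_{C^\alpha(\T^2)} \;\leq\; \sigma_n^{-1}\cdot 2^{-n/2}\cdot 2^{n\alpha/2}\,\|v\|_{L^\infty_t C^\alpha_x},
\qquad \|V_n(t,\cdot)\|_{L^\infty} \;\leq\; \sigma_n^{-1}\cdot 2^{-n/2}\,\|v\|_{L^\infty_{t,x}},
\]
and the analogous time-Hölder computation yields $[V_n]_{C^{\alpha/(1-\alpha)}_t L^\infty_x} \leq \sigma_n^{-1/(1-\alpha)}\cdot 2^{-n/2}\,[v]_{C^{\alpha/(1-\alpha)}_t L^\infty_x}$. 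Plugging in $\sigma_n = \tfrac{1}{2}(1-2^{(\alpha-1)/2})\,2^{(\alpha-1)n/2}$, the first and third exponential factors cancel exactly, while the $L^\infty$ bound decays like $2^{-n\alpha/2}$. Thus both the spatial $C^\alpha$-norm and the time $C^{\alpha/(1-\alpha)}$-norm of $V_n$ are bounded by a constant independent of $n$. This is the core computation, and it is precisely the one that motivated the authors' choice of the weights $\sigma_n$.

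Next I would patch the pieces together into a global bound on $[0,1]\times\T^2$. Spatial regularity is immediate: taking $\sup_t$ in each interval gives a single $n$-independent constant, and spatial periodicity of each $V_n$ on $\T^2$ follows from Lemma~\ref{lem:v-props}(3) together with the fact that $\mathcal{R}^{-n}$ sends the lattice $\sqrt{2}\Z\times\Z$ to a refinement of itself. For time regularity within a single interval the calculation above suffices; for times lying in different intervals, I would use Lemma~\ref{lem:v-props}(2)---that $\partial_t^k v$ vanishes at $t=0$ and $t=1$---to conclude that $V$ and all its time derivatives vanish at each interior seam $s_n$. Hence differences $V(t,x)-V(s,x)$ across interior seams can be estimated by telescoping and applying the within-interval bound, losing only a harmless constant.

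The main obstacle is the accumulation point $t=1/2$, where $V\equiv 0$ and the intervals $[s_{n+1},s_n]$ pile up. Here the spatial $C^\alpha$ bound is already handled by the $L^\infty_t$ estimate above. For the time Hölder bound, I would observe that for $t\in[s_{n+1},s_n]$,
\[
|V(t,x)-V(1/2,x)|\;\leq\; \|V_n\|_{L^\infty_{t,x}} \;\leq\; C\,2^{-n\alpha/2},
\qquad |t-1/2|\;\geq\; s_{n+1}-\tfrac{1}{2}\;=\;\tfrac{1}{2}\,2^{(\alpha-1)(n+1)/2},
\]
so that $|V(t,x)-V(1/2,x)|\,|t-1/2|^{-\alpha/(1-\alpha)} \lesssim 2^{-n\alpha/2}\cdot 2^{n\alpha/2}$, which is bounded uniformly in $n$. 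Combined with the within-interval estimate and the seam-vanishing argument, this extends the Hölder bound across $t=1/2$ and completes the proof. The whole argument is entirely analogous to the verification of Lemma~\ref{lem:v-props}, with the spatial rotation/dilation by $\mathcal{R}^n$ playing the role that the time rescaling alone played for $v$.
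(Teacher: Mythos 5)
Your proof is correct and follows essentially the same route the paper intends: the paper omits the argument, stating only that it is ``directly analogous'' to that of Lemma~\ref{lem:v-props}, and your within-interval rescaling computation (with the exact cancellation of the $\sigma_n^{-1}$, $2^{-n/2}$, and $2^{n\alpha/2}$ factors), the gluing at the seams via the vanishing of $\partial_t^k v$ at $t=0,1$, and the treatment of the accumulation point $t=1/2$ by playing the $2^{-n\alpha/2}$ decay against $|t-1/2|\gtrsim 2^{(\alpha-1)n/2}$ mirror precisely the three cases in the appendix proof of that lemma. (One cosmetic slip: $\mathcal{R}^{-n}$ maps the lattice $\sqrt{2}\Z\times\Z$ into a \emph{sublattice} of itself, not a refinement, and that containment is the one actually needed for periodicity of $V_n$.)
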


We note that $V$ is precisely defined as to realize the sketch given by Figure~\ref{fig:total-dissipator} on the time interval $[\tfrac{1}{2}, 1].$

\subsubsection{Sketch of the proof of Theorem~\ref{thm:total_dissipation}}

To prove Theorem~\ref{thm:total_dissipation}, we have to track how the presence of a small diffusion $\kappa \Delta$ creates errors away from the idealized picture given in Figure~\ref{fig:total-dissipator}. The proof of Theorem~\ref{thm:total_dissipation} is in some sense the mirror image of the proof of Theorem~\ref{thm:two-cell-dissipation}. Again we split into three intervals and track the solution separately on each. The time intervals we consider are $[0,\tfrac{1}{2}], [\tfrac{1}{2}, t_n],$ and $[t_n,1]$ with $n $ (different from the $n$ in Section~\ref{sec:two-cell}) chosen according to $\kappa>0.$

On the time interval $[0,\tfrac{1}{2}]$, we have that $V=0$ and so the solution evolves according to pure diffusion. The diffusion then regularizes the solutions to any length scale $2^{-m/2} \ll \kappa^{1/2}$. In particular, up to vanishing error $\sol_{0,1/2}^{V,\kappa,\T^2} \theta_0$ is piecewise constant on the boxes $(A_m + x_m)_{x_m \in \Lambda_m}.$  

On the time interval $[\tfrac{1}{2}, t_n]$, we argue that the property that $\sol_{0,t_n}^{V,\kappa,\T^2} \theta_0$ is piecewise constant on the boxes $(A_m + x_m)_{x_m \in \Lambda_m}$ still holds. This relies on taking $2^{-n/2} \ll 2^{-m/2} \ll \kappa^{1/2}$. Similar to the second step of the argument for Theorem~\ref{thm:two-cell-dissipation}, we rely on the drift and the diffusion typically moving particles $\ll 2^{-m/2}$ and as such the property of being piecewise constant is essentially unaffected. This is proved in Proposition~\ref{prop:stays_near_constant} and relies on the stability estimate given by Lemma~\ref{lem:constant_change}.

Then, on the time interval $[t_n, 1]$, we want to iteratively use the estimate given by Theorem~\ref{thm:two-cell-dissipation}. We have from the previous steps, using that $n \gg m$---so being piecewise constant on $A_m$ boxes implies being piecewise constant on $A_n$ boxes---that $\sol_{0,t_n}^{V,\kappa,\T^2} \theta_0$ is piecewise constant on the boxes $(A_n + x_n)_{x_n \in \Lambda_n}$ up to vanishing error. We can now use the estimate given by Theorem~\ref{thm:two-cell-dissipation} to give that $\sol_{0,t_\ell}^{V,\kappa,\T^2} \theta_0$ is piecewise constant on the boxes $(A_\ell + x_\ell)_{x_\ell\in \Lambda_\ell}$ for all $\ell \leq n$, up to an error that is vanishing provided we choose $n$ correctly. This is proved in Proposition~\ref{prop:2_cell_averaging} and the correct choice of $n$ is given.

Finally, we have that $\sol_{0,t_0}^{V,\kappa,\T^2} \theta_0 = \sol_{0,1}^{V,\kappa,\T^2} \theta_0$ is piecewise constant on $A_0 = B$, i.e.\ $\sol_{0,1}^{V,\kappa,\T^2} \theta_0$ is pointwise equal to its average---again up to vanishing error. This concludes the proof of Theorem~\ref{thm:total_dissipation}.

\subsection{Maximal spreading of stochastic trajectories}

\label{subsec:spread}

This subsection serves to remark on an interesting property of the flow. It is however independent of the rest of the paper and the complete argument is only suggested.

Letting $X^\kappa_t$ solve the SDE~\eqref{eq:stochastic-intro} with $\theta_0(dx) = \delta_y(dx)$, then the proof outlined above implies that 
\[\lim_{\kappa \to 0} \mathcal{L}(X^\kappa_{1/2}) = \delta_y(dx) \quad \text{and} \quad \lim_{\kappa \to 0} \mathcal{L}(X^\kappa_{s_n}) = \frac{1}{|A_n|} \indc_{x \in A_n + x_n}(dx),\]
where $x_n \in \Lambda_n$ is such that $y \in A_n + x_n$ and $\mathcal{L}(X)$ denotes the law of a random variable $X$.

As such, we have that---in the vanishing noise limit---the particle trajectories given by different realizations of $X^\kappa_t$ spread out to the length scale $2^{-n/2}$ on the time interval $[\frac{1}{2},s_n]$. Since $s_n - \frac{1}{2}\approx 2^{-(1-\alpha) n/2}$, we see that in time $t = s_n-\frac{1}{2}$, the particle trajectories spread out to length scale $ \approx t^{\frac{1}{1-\alpha}}$. 

This scaling is consistent with the model case of non-trivial spreading of stochastic trajectories in the vanishing noise limit induced by a $C^\alpha$ vector field, given by taking the drift to be $u(x) =x|x|^{\alpha-1}$. In fact, using that in the vanishing noise asymptotic the solution to the SDE is concentrated on ODE trajectories, one can verify that this spreading is maximal. That is, for a vector field $u \in L^\infty_t C^\alpha_x$, it is impossible to have the particle trajectories spread at a rate faster than $t^{\frac{1}{1-\alpha}}.$

As such, the vector field $V$ causes particles to spread out at the maximal rate (given the spatial regularity) for \textit{every} initial point $y \in \T^2$. In some sense, the vector field behaves as is if there were a $|x-y|^\alpha$ cusp at every $y \in \T^2$.

\subsection{Proof of Corollary~\ref{cor:other-dissipations}}

\begin{proof}[Proof of Corollary~\ref{cor:other-dissipations}]
    We use $L^p_0$ to denote the subspace of $L^p$ given by mean-zero functions, similarly define $H^\sigma_0$. Then we can rephrase Theorem~\ref{thm:total_dissipation} as an operator norm bound:
    \[\Big\|\sol_{0,1}^{\kappa,V,\T^2}\Big\|_{L^1_0 \to L^1_0} \leq C \kappa^{\frac{(1-\alpha)^2}{72}}.\]
    Taking the adjoint and using that the adjoint solution operator is the solution operator for the backward flow, we get
    \[\Big\|\sol_{0,1}^{\kappa,\tilde V,\T^2}\Big\|_{L^\infty_0 \to L^\infty_0} \leq C \kappa^{\frac{(1-\alpha)^2}{72}},\]
    where $\tilde V(t,x) := V(1-t,x).$

    Then using that
    \[\sol_{0,1}^{\kappa, W, \T^2} =  e^{\kappa \Delta/4}\sol_{0,1}^{\kappa/4, \tilde V, \T^2}\sol_{0,1}^{\kappa/4, V, \T^2} e^{\kappa \Delta/4}\]
    and that the solution operator for a drift-diffusion equation is a contraction in $L^1$ and $L^\infty$, we get 
    \[\Big\|\sol_{0,1}^{\kappa, W, \T^2}\Big\|_{L^1_0 \to L^1_0} \leq  \Big\|\sol_{0,1}^{\kappa/4, \tilde V, \T^2}\Big\|_{L^1_0 \to L^1_0} \Big\|\sol_{0,1}^{\kappa/4, V, \T^2}\Big\|_{L^1_0 \to L^1_0}  \leq C \kappa^{\frac{(1-\alpha)^2}{72}},\]
    and similarly
    \[\Big\|\sol_{0,1}^{\kappa, W, \T^2}\Big\|_{L^\infty_0 \to L^\infty_0} \leq  C \kappa^{\frac{(1-\alpha)^2}{72}}.\]
    We then conclude~\eqref{eq:cor-lp} by Riesz-Thorin interpolation.

    For~\eqref{eq:cor-Hs}, we compute using standard smoothing estimates from the heat flow that
    \[
    \|\sol_{0,1}^{\kappa, W, \T^2}\|_{H^{-\sigma}_0 \to H^\sigma_0} \leq \|e^{\kappa  \Delta/4}\|_{L^2_0 \to H^\sigma_0} \|\sol_{1/4,3/4}^{\kappa, W,\T^2}\|_{L^2_0 \to L^2_0} \|e^{\kappa \Delta/4}\|_{H^{-\sigma}_0 \to L^2_0} 
    \leq C \kappa^{-2\sigma}\kappa^{\frac{(1-\alpha)^2}{72}},\]
    and then conclude for the range of $\sigma$ under consideration.
\end{proof}

\section{Total dissipation of two-cell data}

\label{sec:two-cell}

In the arguments below, some attempt at maintaining optimality in $\alpha$ is made. For example, we balance the size of the error in each step of the proof when choosing $n,m$ such that $5^{-n} \ll 5^{-m} \ll \kappa^{1/2}$. However, we do not keep the constant prefactor in the rate optimal for the sake of simplicity.

Note that all lemmas stated below, as well as in Section~\ref{sec:universal}, have their proofs deferred to Appendix~\ref{appendix:stability}. We do this in order to separate the relevant facts about drift-diffusion equations from the specific argument used to prove total dissipation.

    The following proposition controls the evolution of the two-cell data under the two-cell dissipating flow on the time interval $[0,t_n]$, as discussed in Subsection~\ref{subsec:two-cell-sketch}. Note particularly that we get an algebraic rate of error $\kappa^{(1-\alpha)/12}$ that degenerates to $0$ as $\alpha \to 1$, in line with anomalous dissipation being impossible at Lipschitz regularity. Note also that the length scale up to which we are treating the diffusion as a perturbative error is given by $5^{-n} \approx \kappa^{\frac{3}{2(\alpha+2)}} \ll \kappa^{1/2}$ with again the separation degenerating as $\alpha \to 1$.

\begin{proposition}
\label{prop:close-to-transport}
    There exists $C(\alpha)>0$ so that for all boundary data $f\in L^\infty([0,1]\times\partial B)$ and $\kappa>0$,  letting $n:=\big\lceil\tfrac{3}{2(\alpha+2)}\log_5(\kappa^{-1})\big\rceil$, it holds that 
\[\big\|\sol_{0,t_n}^{v,\kappa,f} \Theta_0-\Theta_n\big\|_{L^1(B)}\leq C\big(1 +\|f\|_{L^\infty([0,1]\times\partial B)}\big)\kappa^{(1-\alpha)/12}.\]
\end{proposition}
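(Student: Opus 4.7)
The plan is to telescope the error $\phi_n-\Theta_n$ across the intervals $[t_j,t_{j+1}]$, where $\phi_j := \sol_{0,t_j}^{v,\kappa,f}\Theta_0$ and $\Theta_j = \sol_{0,t_j}^v \Theta_0 = \indc_{E_j}$. Writing $\epsilon_j := \|\phi_j-\Theta_j\|_{L^1(B)}$ and using that the increment splits as
\[
\phi_{j+1}-\Theta_{j+1} = \sol^{v,\kappa,f}_{t_j,t_{j+1}}\phi_j - \sol^{v,\kappa,f}_{t_j,t_{j+1}}\Theta_j + \sol^{v,\kappa,f}_{t_j,t_{j+1}}\Theta_j - \sol^v_{t_j,t_{j+1}}\Theta_j,
\]
the first difference is linear in the initial data with zero boundary data, hence an $L^1\to L^1$ contraction, while the second is what Lemma~\ref{lem:small_kappa_difference} is designed to control. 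This yields the one-step recursion $\epsilon_{j+1}\leq \epsilon_j + \delta_j$ with $\delta_j := \|\sol^{v,\kappa,f}_{t_j,t_{j+1}}\Theta_j - \sol^v_{t_j,t_{j+1}}\Theta_j\|_{L^1(B)}$, and starting from $\epsilon_0=0$ gives $\epsilon_n\leq \sum_{j=0}^{n-1}\delta_j$.

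The heart of the proof is bounding $\delta_j$. The key inputs are (i) on $[t_j,t_{j+1}]$ the rescaling~\eqref{eq:v-def} makes $v$ Lipschitz in space with $\|v\|_{L^\infty_t W^{1,\infty}_x}\lesssim \tau_j^{-1}$, so the Lipschitz-flow stretching factor over the interval is $e^{\tau_j\cdot \tau_j^{-1}}=O(1)$; (ii) $\Theta_j=\indc_{E_j}$ has perimeter $\lesssim 5^j$ by Item~\ref{item:boundary_curves}; and (iii) the stochastic-characteristics representation shows that replacing the deterministic backward flow by a Brownian-perturbed one of variance $\kappa\tau_j$ changes the integral of an indicator of a set of perimeter $P$ by at most $P\sqrt{\kappa\tau_j}$ (up to constants), with an analogous contribution of order $(1+\|f\|_{L^\infty})\sqrt{\kappa\tau_j}$ from trajectories that reach $\partial B$ and feel $f$. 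Lemma~\ref{lem:small_kappa_difference} is expected to package these estimates into the bound
\[
\delta_j \;\leq\; C\bigl(1+\|f\|_{L^\infty([0,1]\times\partial B)}\bigr)\bigl(1+5^j\bigr)\sqrt{\kappa\tau_j}.
\]

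Using $\tau_j\approx 5^{(\alpha-1)j}$ gives $\sqrt{\kappa\tau_j}\cdot 5^j \lesssim \kappa^{1/2} 5^{(1+\alpha)j/2}$, an increasing geometric series in $j$ whose total is dominated by the last term:
\[
\epsilon_n \;\leq\; \sum_{j=0}^{n-1}\delta_j \;\lesssim\; \bigl(1+\|f\|_{L^\infty}\bigr)\kappa^{1/2}\, 5^{(1+\alpha)n/2}.
\]
Plugging in $n = \lceil \tfrac{3}{2(\alpha+2)}\log_5(\kappa^{-1})\rceil$, so that $5^n\lesssim \kappa^{-3/(2(\alpha+2))}$, a direct computation of the exponent gives $\tfrac12 - \tfrac{3(1+\alpha)}{4(\alpha+2)} = \tfrac{1-\alpha}{4(\alpha+2)}$, and since $\alpha+2\leq 3$ we conclude
\[
\epsilon_n \;\lesssim\; \bigl(1+\|f\|_{L^\infty}\bigr)\kappa^{(1-\alpha)/(4(\alpha+2))} \;\leq\; C\bigl(1+\|f\|_{L^\infty}\bigr)\kappa^{(1-\alpha)/12},
\]
as desired (handling $\kappa\geq 1$ trivially by the $L^\infty$ maximum principle).

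The main obstacle is verifying step (iii) in a way robust to the boundary data $f$: the $\kappa>0$ solution feels $f$ whenever the stochastic characteristics hit $\partial B$, whereas the transport solution does not (because $v$ is tangent to $\partial B$). Handling this requires the stochastic representation with stopping at $\partial B$, and controlling the probability of hitting the boundary within time $\tau_j$ by $\sqrt{\kappa\tau_j}/\mathrm{dist}(\cdot,\partial B)$-type estimates, which should be the content of Lemma~\ref{lem:small_kappa_difference} as proved in Appendix~\ref{appendix:stability}. Everything else is clean bookkeeping through a geometric sum.
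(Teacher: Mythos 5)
Your proposal is correct and follows essentially the same route as the paper: telescope over the intervals $[t_j,t_{j+1}]$ using the $L^1$ contraction, rescale each step to apply Lemma~\ref{lem:small_kappa_difference} to $U$ with diffusivity $\tau_j\kappa$, and sum the resulting increasing geometric series, with the exponent computation $\tfrac12-\tfrac{3(1+\alpha)}{4(\alpha+2)}=\tfrac{1-\alpha}{4(\alpha+2)}\geq\tfrac{1-\alpha}{12}$ matching the paper's. The only discrepancy is that the actual lemma carries an extra factor of $\log(\kappa^{-1})$, which is harmlessly absorbed by the strict slack $\tfrac{1-\alpha}{4(\alpha+2)}>\tfrac{1-\alpha}{12}$ for $\alpha<1$, exactly as the paper does.
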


The following lemma bounds the error between the drift-diffusion equation and the pure transport equation utilizing ``regularity'' of the initial data $\indc_E$ through the measure of the boundary of the set $E$ (which e.g.\ controls BV regularity). This lemma---suitably rescaled in time---is used iteratively as the main tool of Proposition~\ref{prop:close-to-transport}.

\begin{lemma}\label{lem:small_kappa_difference}
    For any divergence-free vector field $u \in L^\infty\big([0,1],W^{1,\infty}(B)\big)$ tangent to $\partial B$, there exists a constant $C\big(\|u\|_{L^\infty([0,1],W^{1,\infty}(B))}\big)>0$ so that for all boundary data $f\in L^\infty([0,1]\times \partial B)$, $\kappa>0$, and subsets $E\subseteq B$ so that $\partial E$ can be parametrized by a finite length Lipschitz curve, we have the estimate
    \[\big\|\sol_{0,1}^{u,\kappa,f} \indc_E - \sol_{0,1}^{u}\indc_E\big\|_{L^1(B)} \leq C\big(1+\|f\|_{L^\infty([0,1]\times\partial B)}\big)(1+|\partial E|)\kappa^{1/2} \log (\kappa^{-1}),\]
    where $|\partial E|$ denotes the length of the boundary curve for $\partial E$.
\end{lemma}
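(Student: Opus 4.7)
The plan is to exploit the stochastic representation of the drift-diffusion equation. Introduce the time-reversed SDE
\[dZ_r = -u(1-r, Z_r)\,dr + \sqrt{2\kappa}\,dW_r, \qquad Z_0 = x,\]
killed at the first exit time $\tau^x$ from $B$. An application of It\^o's formula together with the PDE gives the representation
\[\sol_{0,1}^{u,\kappa,f}\indc_E(x) = \E\bigl[\indc_E(Z^x_1)\indc_{\tau^x > 1} + f(1-\tau^x, Z^x_{\tau^x})\indc_{\tau^x \leq 1}\bigr],\]
while the deterministic analogue $\xi^x$, satisfying $\dot\xi^x_r = -u(1-r, \xi^x_r)$ with $\xi^x_0 = x$, stays inside $B$ by tangentiality of $u$ and yields $\sol_{0,1}^u\indc_E(x) = \indc_E(\xi^x_1)$. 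Subtracting, integrating over $x \in B$, and applying the triangle inequality reduces the lemma to bounding a bulk error $\int_B \E|\indc_E(Z^x_1) - \indc_E(\xi^x_1)|\,dx$ and a boundary loss $(1 + \|f\|_\infty)\int_B \P(\tau^x \leq 1)\,dx$.

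For the boundary loss, I would apply Gr\"onwall's inequality to $Z^x - \xi^x$ using the spatial Lipschitz bound on $u$ to obtain $|Z^x_r - \xi^x_r| \leq C\sqrt{\kappa}\sup_{s\leq r}|W_s|$ with $C$ depending on $\|u\|_{L^\infty W^{1,\infty}}$. Since $\xi^x$ stays at distance comparable to $\dist(x,\partial B)$ from $\partial B$, exit requires the Brownian supremum to exceed $c\dist(x,\partial B)/\sqrt{\kappa}$, and a standard Gaussian tail estimate followed by integration in $x$ gives $\int_B \P(\tau^x \leq 1)\,dx \lesssim \sqrt{\kappa}$, which is easily absorbed into the stated rate.

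The bulk term is the main content. For any threshold $r>0$,
\[\E|\indc_E(Z^x_1) - \indc_E(\xi^x_1)| \leq \P(|Z^x_1 - \xi^x_1| > r) + \indc_{\xi^x_1 \in N_r(\partial E)},\]
where $N_r(\partial E)$ denotes the $r$-neighborhood of $\partial E$. The same Gr\"onwall/Gaussian argument bounds the first term by $C\exp(-cr^2/\kappa)$. Since $u$ is divergence-free, the map $x \mapsto \xi^x_1$ is volume-preserving, so $\int_B \indc_{\xi^x_1 \in N_r(\partial E)}\,dx = |N_r(\partial E)| \leq Cr|\partial E|$ by the finite-length Lipschitz parametrization hypothesis. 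The main obstacle is the calibration $r = C'\sqrt{\kappa}\log(\kappa^{-1})$ with $C'$ chosen so the Gaussian tail drops below any algebraic power of $\kappa$; this is precisely what forces the logarithmic factor in the final bound $C|\partial E|\sqrt{\kappa}\log(\kappa^{-1})$.
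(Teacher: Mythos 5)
Your proposal is correct and follows essentially the same route as the paper: the stochastic characteristic representation, a Gr\"onwall comparison of the SDE and ODE flows giving $|X^\kappa_0(x)-X_0(x)|\lesssim \sqrt{\kappa}\sup_t|w_t|$, Gaussian tails at the threshold $\sqrt{\kappa}\log(\kappa^{-1})$, and volume preservation of the backward flow combined with the tube estimate $|N_r(\partial E)|\lesssim r(|\partial E|+1)$ (you need the $+1$, which the $(1+|\partial E|)$ factor in the statement absorbs). The only differences are bookkeeping: the paper splits the spatial domain into a good set $A$ and its complement rather than splitting inside the expectation, and for the exit probability it runs Gr\"onwall from the exit point along the SDE trajectory rather than integrating your distance-dependent tail $\exp(-c\,\dist(x,\partial B)^2/\kappa)$ over $x$; both uses of tangentiality are equivalent.
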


\begin{proof}[Proof of Proposition~\ref{prop:close-to-transport}]
Rewriting the difference between $\sol^{v,\kappa,f}_{0,t_n}\Theta_0$ and $\Theta_n$ as a telescoping sum, we have that
\begin{equation*}
\big\|\sol^{v,\kappa,f}_{0,t_n}\Theta_0-\Theta_n\big\|_{L^1}=\bigg\|\sum_{j=0}^{n-1}\sol^{v,\kappa,f}_{t_j,t_n}\Theta_{j}-\sol^{v,\kappa,f}_{t_{j+1},t_n}\Theta_{j+1}\bigg\|_{L^1}\leq \sum_{j=0}^{n-1}\bigg\|\sol^{v,\kappa,f}_{t_j,t_n}\Theta_{j}-\sol^{v,\kappa,f}_{t_{j+1},t_n}\Theta_{j+1}\bigg\|_{L^1},
\end{equation*}
where the last line follows by the triangle inequality. Since the solution operator $\sol^{v,\kappa,0}_{s,t}:L^1(B)\rightarrow L^1(B)$ is contractive for all $
s\leq t$,
\begin{equation}\label{eq:telescope}
\sum_{j=0}^{n-1}\bigg\|\sol^{v,\kappa,f}_{t_j,t_n}\Theta_{j}-\sol^{v,\kappa,f}_{t_{j+1},t_n}\Theta_{j+1}\bigg\|_{L^1}=\sum_{j=0}^{n-1}\bigg\|\sol_{t_{j+1},t_n}^{v,\kappa,0}\Big(\sol^{v,\kappa,f}_{t_j,t_{j+1}}\Theta_{j}-\Theta_{j+1}\Big)\bigg\|_{L^1}\leq \sum_{j=0}^{n-1}\big\|\sol^{v,\kappa,f}_{t_j,t_{j+1}}\Theta_{j}-\Theta_{j+1}\big\|_{L^1}.
\end{equation}
We will bound each term in the sum on the right-hand side above using Lemma~\ref{lem:small_kappa_difference}.

For any fixed $j$, letting $g(t,x):=f(\tau_j(t-j)+t_j,x)$, by rescaling in time it holds that
\[\sol^{v,\kappa,f}_{t_j,t_{j+1}}\Theta_j=\sol^{U,\tau_j\kappa,g}_{j,j+1}\Theta_j.\]
Since $\sol^U_{j,j+1}\Theta_j=\sol^U_{j,j+1}\indc_{E_j}=\indc_{E_{j+1}}=\Theta_{j+1}$ by Item~\ref{item:set_mapping}, Lemma~\ref{lem:small_kappa_difference} implies that
\[\big\|\sol^{v,\kappa,f}_{t_j,t_{j+1}}\Theta_{j}-\Theta_{j+1}\big\|_{L^1}=\Big\|\sol^{U,\tau_j\kappa,g}_{j,j+1}\indc_{E_j}-\sol^U_{j,j+1}\indc_{E_j}\Big\|_{L^1}\leq C(1+\|g\|_{L^\infty})(1+|\partial E_j|)(\tau_j\kappa)^{1/2} \log\big( (\tau_j\kappa)^{-1}\big),\]
for some constant $C>0$ that only depends on $\|U\|_{L^\infty_t W^{1,\infty}_x}$. Since $\|g\|_{L^\infty}\leq \|f\|_{L^\infty}$ and $|\partial E_j|\leq C5^j$ by Item~\ref{item:boundary_curves}, the above inequality implies that
\begin{equation}\label{eq:j_to_j+1_bound}
\big\|\sol^{v,\kappa,f}_{t_j,t_{j+1}}\Theta_{j}-\Theta_{j+1}\big\|_{L^1}\leq C(1+\|f\|_{L^\infty})5^j(\tau_j\kappa)^{1/2} \log\big((\tau_j\kappa)^{-1}\big)
\end{equation}
for all $j$.

Inserting~\eqref{eq:j_to_j+1_bound} into~\eqref{eq:telescope}, we have in total found that
\[\|\sol^{v,\kappa,f}_{0,t_n}\Theta_0-\Theta_n\|_{L^1}\leq C(1+\|f\|_{L^\infty})\sum_{j=0}^{n-1}5^j(\tau_j\kappa)^{1/2}\log \big((\tau_j\kappa)^{-1}\big).\]
Uniformly bounding $\log \big((\tau_j\kappa)^{-1}\big)$ by $\log \big((\tau_n\kappa)^{-1}\big)$ and using that $\sum_{j=0}^{n-1}5^j\tau_j^{1/2}\leq C5^n\tau_n^{1/2}$ for some $C>0$, we have shown that
\[\big\|\sol^{v,\kappa,f}_{0,t_n}\Theta_0-\Theta_n\big\|_{L^1}\leq C(1+\|f\|_{L^\infty})5^n(\tau_n\kappa)^{1/2}\log ((\tau_{n}\kappa)^{-1}).\]
By our choice of $n$, $5^n(\tau_n\kappa)^{1/2}\leq C\kappa^{\frac{(1-\alpha)}{4(\alpha+2)}}$ and $\log( (\tau_n\kappa)^{-1})\leq C\log(\kappa^{-1})$, thus the proposition holds since $\kappa^{\frac{(1-\alpha)}{4(\alpha+2)}}\log(\kappa^{-1})\leq C\kappa^{(1-\alpha)/12}$.
\end{proof}

The following proposition controls the evolution of the two-cell data under the two-cell dissipating flow on the time interval $[t_n,\tfrac{1}{2}]$, as discussed in Subsection~\ref{subsec:two-cell-sketch}. Recall that our goal is to show that the solution has the correct means on small boxes $B_m + x_m$ at time $1/2$, that is $\big(\sol_{0,1/2}^{v,\kappa,f} \Theta_0)_{B_m + x} \approx \tfrac{1}{2}$ for all $x_m \in \Pi_m$, where $m$ is such that $5^{-n} \ll 5^{-m} \ll \kappa^{1/2}.$ The way this is made precise below is to say that $\sol_{0,1/2}^{v,\kappa,f} \Theta_0$ is $L^1$ close to a function $\varphi$, where $\varphi$ has the exact right means: $(\varphi)_{B_m + x} =\tfrac{1}{2}.$ Note by Proposition~\ref{prop:close-to-transport}, we already have that $\sol_{0,1/2}^{v,\kappa,f} \Theta_0 \approx \sol_{t_n,1/2}^{v,\kappa,f} \Theta_n$. As such, we define $\varphi$ exactly as $\sol_{t_n,1/2}^{v,\kappa,f} \Theta_n$ but with the means corrected: we add a function that is piecewise constant on the $5^{-m}$ grid that corrects the means to be exactly $\tfrac{1}{2}$ on each box $B_m + x_m, x_m \in \Pi_m$, as is defined in~\eqref{eq:varphi-def}.

\begin{proposition}
    \label{prop:advection-noise-small}
    There exists $C(\alpha)>0$ so that for all boundary data $f\in L^\infty([0,1]\times\partial B)$ and $\kappa>0$, letting
    \[n:=\big\lceil\tfrac{3}{2(\alpha+2)}\log_5(\kappa^{-1})\big\rceil\text{ and } m :=\big\lceil\tfrac{\alpha+5}{4(\alpha+2)}\log_5(\kappa^{-1})\big\rceil,\]
    there exists $\varphi \in L^\infty(B)$ so that $\|\varphi \|_{L^\infty(B)}\leq C\big(1 +\|f\|_{L^\infty([0,1]\times\partial B)}\big)$, $(\varphi)_{B_m+x_m}=\frac{1}{2}$ for all $x_m\in\Pi_m$, and
\[\Big\|\sol^{v,\kappa,f}_{t_n,1/2}\Theta_n-\varphi\Big\|_{L^1(B)}\leq C\big(1 + \|f\|_{L^\infty([0,1]\times\partial B)}\big)\kappa^{(1-\alpha)/12}.\]
\end{proposition}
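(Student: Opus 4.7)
The plan is to build $\varphi$ by explicitly correcting the $B_m$-averages of $\psi := \sol^{v,\kappa,f}_{t_n,1/2}\Theta_n$. Set
\[\varphi := \psi + \sum_{x_m \in \Pi_m}\bigl(\tfrac{1}{2}-(\psi)_{B_m+x_m}\bigr)\indc_{B_m+x_m},\]
so that $(\varphi)_{B_m+x_m}=\tfrac{1}{2}$ holds by construction. The maximum principle for the drift-diffusion equation yields $\|\psi\|_{L^\infty}\leq \max(\|\Theta_n\|_{L^\infty},\|f\|_{L^\infty})\leq 1+\|f\|_{L^\infty}$, and each additive correction is bounded by the same quantity, producing the required pointwise bound on $\varphi$. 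The $L^1$ error then reduces to $\|\varphi-\psi\|_{L^1(B)}=\sum_{x_m}|B_m|\,|(\psi)_{B_m+x_m}-\tfrac{1}{2}|$.

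The key observation is that $(\Theta_n)_{B_m+x_m}=\tfrac{1}{2}$ for every $x_m\in\Pi_m$: since $n>m$ and the grids $\Pi_n,\Pi_m$ are compatible, each $B_m+x_m$ decomposes into $5^{2(n-m)}$ sub-boxes of the form $B_n+x_n'$, each of which has $\Theta_n$-average exactly $\tfrac{1}{2}$ by Item~\ref{item:box_averages}. Consequently, bounding $\|\varphi-\psi\|_{L^1}$ amounts to controlling
\[\sum_{x_m\in\Pi_m}|B_m|\,\bigl|\bigl(\sol^{v,\kappa,f}_{t_n,1/2}\Theta_n-\Theta_n\bigr)_{B_m+x_m}\bigr|,\]
which is precisely the kind of quantity that Lemma~\ref{lem:mean-preservation} is designed to estimate through the stochastic-characteristic representation. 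I expect this lemma to bound the above sum by $(\|\Theta_n\|_{L^\infty}+\|f\|_{L^\infty})$ times the ratio of the typical particle displacement over $[t_n,\tfrac{1}{2}]$ to the box side $5^{-m}$.

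To compute the displacement, the telescoping identity $\tfrac{1}{2}-t_n=\sum_{j\ge n}\tau_j=\tfrac{1}{2}\cdot 5^{(\alpha-1)n}$ gives a diffusive contribution of $\sqrt{\kappa(\tfrac{1}{2}-t_n)}\lesssim\sqrt{\kappa}\cdot 5^{(\alpha-1)n/2}$, while Item~\ref{item:L^infty_bound} yields a drift contribution $\int_{t_n}^{1/2}\|v(s,\cdot)\|_{L^\infty}\,ds\leq\sum_{j\ge n}\tau_j\cdot\tau_j^{-1}\cdot C5^{-j}\lesssim 5^{-n}$. Substituting $5^n\approx\kappa^{-3/(2(\alpha+2))}$ and $5^m\approx\kappa^{-(\alpha+5)/(4(\alpha+2))}$, the two pieces produce error rates $\kappa^{(1-\alpha)/(2(\alpha+2))}$ and $\kappa^{(1-\alpha)/(4(\alpha+2))}$ respectively; since $\alpha<1$ yields $4(\alpha+2)\leq 12$, both are dominated by $\kappa^{(1-\alpha)/12}$, closing the estimate.

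The main technical content lies entirely in Lemma~\ref{lem:mean-preservation} (deferred to Appendix~\ref{appendix:stability}), whose proof must bound changes of box-averages linearly in the typical particle displacement while absorbing the boundary data as pure $L^\infty$ error. The principal subtlety in the parameter setup is that $m$ must satisfy two competing constraints: $5^{-m}$ must dominate both the diffusive scale $\sqrt{\kappa(\tfrac{1}{2}-t_n)}$ and the drift scale $5^{-n}$, but $m$ must stay far enough below $n$ that the $B_m$-grid is coarsened by $\Theta_n$. The specific exponents in the definitions of $n$ and $m$ are exactly those that equalize the diffusive and drift error contributions, producing the stated rate.
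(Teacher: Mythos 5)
Your proposal is correct and follows essentially the same route as the paper: the same mean-corrected $\varphi$, the same reduction to the box averages of $\sol^{v,\kappa,f}_{t_n,1/2}\Theta_n-\Theta_n$ (using $(\Theta_n)_{B_m+x_m}=\tfrac12$), and the same application of Lemma~\ref{lem:mean-preservation} after rescaling each sub-box, with matching exponent arithmetic (the dropped $\log(\kappa^{-1})$ is harmless since $(1-\alpha)/(2(\alpha+2))>(1-\alpha)/12$). The only slight inaccuracy is the closing remark that $n,m$ equalize the drift and diffusive contributions within this step---they come out as $\kappa^{(1-\alpha)/(4(\alpha+2))}$ versus $\kappa^{(1-\alpha)/(2(\alpha+2))}$; the actual balancing is against the errors of Proposition~\ref{prop:close-to-transport} and the final heat-averaging step---but this does not affect the validity of the argument.
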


    The following lemma controls how much the drift and diffusion can change the mean of the initial data. Note particularly the presence of a $L^1_tL^\infty_x$ norm on $u$ on the right-hand side. This bound---suitably rescaled in space and time---is the central tool of the proof of Proposition~\ref{prop:advection-noise-small}.  

\begin{lemma}
    \label{lem:mean-preservation}
    There exists $C>0$ so that for any divergence-free vector field $u \in L^1\big([0,1],L^\infty(B)\big)$, $\kappa>0$, boundary data $f\in L^\infty([0,1]\times\partial B)$, and initial datum $\theta_0$, it holds that
    \[\Big|\big(\sol_{0,1}^{u,\kappa,f} \theta_0\big)_B- \big(\theta_0\big)_B\Big| \leq C\big(\|\theta_0\|_{L^\infty(B)}+\|f\|_{L^\infty([0,1]\times\partial B)}\big)\big(\|u\|_{L^1([0,1],L^\infty(B))}+\kappa^{1/2} \log (\kappa^{-1})\big).\]
\end{lemma}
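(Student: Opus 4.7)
The plan is to combine a PDE-duality argument with the stochastic representation of drift-diffusion equations, consistent with the approach for the other stability lemmas deferred to Appendix~\ref{appendix:stability}. First, by linearity I would split $\sol_{0,1}^{u,\kappa,f}\theta_0$ into the zero-boundary piece $\sol_{0,1}^{u,\kappa,0}\theta_0$ and the zero-initial-data piece $\sol_{0,1}^{u,\kappa,f}0$, and control the deviation of the mean for each separately.

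For the first piece, I would introduce the backward adjoint solution $\phi$ of $\partial_s\phi+\kappa\Delta\phi+u\cdot\nabla\phi=0$ on $[0,1]\times B$ with terminal datum $\phi(1,\cdot)=1$ on $B$ and Dirichlet datum $\phi|_{[0,1]\times\partial B}=0$. Testing the PDE for $\sol_{0,\cdot}^{u,\kappa,0}\theta_0$ against $\phi$ and integrating by parts in space and time, all boundary terms vanish thanks to the Dirichlet conditions together with $\nabla\cdot u=0$, producing the duality identity
\[\int_B \sol_{0,1}^{u,\kappa,0}\theta_0(x)\,dx = \int_B \theta_0(x)\phi(0,x)\,dx.\]
By the backward Kolmogorov equation, $\phi(0,x)=\P_x[\tau^Z>1]$, where $\tau^Z$ is the first exit time from $B$ of the forward SDE $dZ_t=u(t,Z_t)\,dt+\sqrt{2\kappa}\,dW_t$, so the deviation from $|B|(\theta_0)_B$ is controlled by $\|\theta_0\|_{L^\infty}\int_B \P_x[\tau^Z\leq 1]\,dx$. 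For the second piece, I would apply the standard Feynman--Kac representation to the reverse-time characteristic SDE $dY_s=-u(1-s,Y_s)\,ds+\sqrt{2\kappa}\,d\tilde W_s$ to obtain $\sol_{0,1}^{u,\kappa,f}0(x)=\E_x[f(1-\tau^Y,Y_{\tau^Y})\indc_{\tau^Y\leq 1}]$, yielding the analogous bound $\|f\|_{L^\infty}\int_B\P_x[\tau^Y\leq 1]\,dx$.

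The final step is the geometric estimate on these exit-probability integrals. Both processes have pathwise displacement from the starting point bounded by $\|u\|_{L^1([0,1],L^\infty(B))}+\sqrt{2\kappa}\sup_{s\leq 1}|W_s|$, where the first term absorbs the full deterministic contribution via the $L^1_tL^\infty_x$ norm. Setting $M:=(c_0\kappa\log(\kappa^{-1}))^{1/2}$ with $c_0$ sufficiently large, Gaussian concentration gives $\P[\sqrt{2\kappa}\sup_s|W_s|>M]\leq \kappa^{10}$, so on the overwhelming event an exit requires $d(x,\partial B)\leq \|u\|_{L^1_tL^\infty_x}+M$. Combining with the boundary-layer volume estimate $|\{x\in B:d(x,\partial B)\leq r\}|\les r$ and integrating, both exit-probability integrals are bounded by $C\bigl(\|u\|_{L^1([0,1],L^\infty(B))}+\kappa^{1/2}\log(\kappa^{-1})\bigr)$; after dividing by $|B|$ this yields the desired estimate.

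I expect the main technical care to be in justifying the duality identity rigorously for the merely $L^1_tL^\infty_x$ drift: for smooth $u$ it follows directly from integration by parts, but in the setting at hand it must be justified via approximation by smooth divergence-free fields, which is routine because the nondegenerate diffusion $\kappa>0$ supplies the needed compactness. The remaining estimate on the boundary layer is a standard Gaussian tail integration and is what produces the logarithmic factor in the final bound.
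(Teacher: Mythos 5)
Your proposal is correct, and it follows the same overall architecture as the paper (stochastic representation, a boundary layer of width $\|u\|_{L^1_tL^\infty_x}+\kappa^{1/2}\log(\kappa^{-1})$, and a Gaussian tail bound for the exit probability), but the key identity you use for the initial-data contribution is genuinely different. The paper works entirely with the single backward Feynman--Kac representation $\sol_{0,1}^{u,\kappa,f}\theta_0(x)=\E[\theta_0(X_0^\kappa(x));\tau_x\le 0]+\E[f(\tau_x,X_{\tau_x}^\kappa(x));\tau_x>0]$, restricts the spatial integral to the shrunken box $B_\delta$, and on the event that no point of $B_\delta$ exits it exploits the divergence-free structure via \emph{volume preservation of the backward stochastic flow}: $\int_{B_\delta}\theta_0(X_0^\kappa(x))\,dx=\int_{X_0^\kappa(B_\delta)}\theta_0$ with $|B\setminus X_0^\kappa(B_\delta)|\le C\delta$. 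You instead exploit the divergence-free structure via \emph{duality}: testing against the adjoint solution with terminal datum $1$ and zero Dirichlet data gives the exact identity $\int_B\sol_{0,1}^{u,\kappa,0}\theta_0=\int_B\theta_0(x)\,\P_x[\tau^Z>1]\,dx$, reducing everything (for both pieces of your linearity split) to the integrated exit probability. Your route is arguably cleaner for the interior term---it avoids the Fubini manipulation and the conditioning on the uniform non-exit event over $B_\delta$---at the price of having to set up and justify the adjoint boundary-value problem and the integration by parts for a drift that is merely $L^1_tL^\infty_x$; you correctly identify this as the point needing care and correctly note that the nondegenerate diffusion makes the approximation argument routine. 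The exit-probability estimate itself (pathwise displacement bounded by $\|u\|_{L^1_tL^\infty_x}+\sqrt{2\kappa}\sup_{s\le1}|W_s|$, Gaussian concentration at threshold of order $\kappa^{1/2}\log(\kappa^{-1})$ or smaller, boundary-layer volume $\lesssim r$) coincides with the paper's. One small remark: your Dirichlet condition on the adjoint $\phi$ is essential here because, unlike in Lemma~\ref{lem:small_kappa_difference}, $u$ is not assumed tangent to $\partial B$ in this lemma; you use it correctly to kill all boundary terms.
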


\begin{proof}[Proof of Proposition~\ref{prop:advection-noise-small}] 
We let
\begin{equation}
\label{eq:varphi-def}
\varphi(x):=\sol^{v,\kappa,f}_{t_n,1/2}\Theta_n(x)+\sum_{x_m\in\Pi_m}\bigg(\tfrac{1}{2}-\Big(\sol^{v,\kappa,f}_{t_n,1/2}\Theta_n\Big)_{B_m+x_m}\bigg)\indc_{B_m+x_m}(x)
\end{equation}
    so that $(\varphi)_{B_m+x_m}=\tfrac{1}{2}=\big(\Theta_n\big)_{B_m+x_m}$ for all $x_m\in \Pi_m$ by Item~\ref{item:box_averages}. We thus have that
\begin{align}
\big\|\sol^{v,\kappa,f}_{t_n,1/2}\Theta_n-\varphi\big\|_{L^1(B)}&=\sum_{x_m\in\Pi_m}\int_{B_m+x_m}\bigg|\big(\Theta_n\big)_{B_m+x_m} -\big(\sol^{v,\kappa,f}_{t_n,1/2}\Theta_n\Big)_{B_m+x_m}\bigg|\,dx\notag
\\&\leq \max_{x_m\in \Pi_m}\bigg|\big(\Theta_n\big)_{B_m+x_m} -\big(\sol^{v,\kappa,f}_{t_n,1/2}\Theta_n\Big)_{B_m+x_m}\bigg|.\label{eq:box_averages}
\end{align}
We just need to bound the term in the max in~\eqref{eq:box_averages} uniformly over all $x_m\in\Pi_m$.

Letting $\tau := \frac{1}{2} - t_n,$ $g(t,x):=\sol_{t_n,t_n+\tau t}^{v,\kappa,f}\Theta_n(5^{-m}x+x_m)$, $w(t,x):=\tau 5^mv(\tau t+t_n,5^{-m}x+x_m)$,
and $\Psi(x):=\Theta_n(5^{-m}x+x_m),$ by rescaling in time and space it holds that
\[\sol_{t_n,1/2}^{v,\kappa,f}\Theta_n(5^{-m}x+x_m)=\sol_{0,1}^{w,\tau5^{2m}\kappa,g}\Psi(x)\]
for all $x\in B$. Together with Lemma~\ref{lem:mean-preservation} this implies that for each $x_m \in \Pi_m$
\begin{align*}\bigg|\big(\Theta_n\big)_{B_m+x_m} -\big(\sol^{v,\kappa,f}_{t_n,1/2}\Theta_n\Big)_{B_m+x_m}\bigg|&=\bigg| \big( \sol^{w,\tau5^{2m}\kappa,g}_{0,1}\Psi\big)_B-\big(\Psi\big)_B \bigg|
\\&\leq C(\|\Psi\|_{L^\infty}+\|g\|_{L^\infty})\big(\|w\|_{L^1_tL^\infty_x}+(\tau5^{2m}\kappa)^{1/2} \log\big( (\tau5^{2m}\kappa)^{-1}\big)\big).
\end{align*}
Clearly $\|\Psi\|_{L^\infty}\leq 1$ and $\|g\|_{L^\infty}\leq 1+\|f\|_{L^\infty}.$ Using the definitions of $w$ and $v$ with Item~\ref{item:L^infty_bound} in Theorem~\ref{thm:ACM_field} we see that
\[\|w\|_{L^1_tL^\infty_x}\leq5^{m}\sum_{j=n}^\infty\tau_j\|v\|_{L^\infty([t_n,t_{n+1}]\times B)}\leq5^{m}\sum_{j=n}^\infty\|U\|_{L^\infty([n,n+1]\times B)}\leq C5^{m-n}.\]
Combining these bounds, in total we have found that
\begin{equation}\label{eq:complicated_bound}
\big\|\sol^{v,\kappa,f}_{t_n,1/2}\Theta_n-\varphi\big\|_{L^1(B)}\leq C(1+\|f\|_{L^\infty})\big(5^{m-n}+(\tau5^{2m}\kappa)^{1/2} \log \big((\tau5^{2m}\kappa)^{-1}\big)\big).    
\end{equation}

To conclude, it thus suffices to bound the last factor in the right-hand side of~\eqref{eq:complicated_bound} by some constant times $\kappa^{(1-\alpha)/12}$. Since $\tau=\sum_{j=n}^\infty\tau_j\leq C\tau_n$, and $m<n$, we have the immediate bound
\[(\tau5^{2m}\kappa)^{1/2} \log \big((\tau5^{2m}\kappa)^{-1}\big)\leq C(\tau_n5^{2n}\kappa)^{1/2} \log ((\tau_n\kappa)^{-1}).\]
In Proposition~\ref{prop:close-to-transport} we bounded the term on the right-hand side above by $C\kappa^{(1-\alpha)/12}$. On the other hand, by our choice of $n$ and $m$ 
\[5^{m-n}\leq C \kappa^{\frac{(1-\alpha)}{4(\alpha+2)}}\leq C\kappa^{(1-\alpha)/12}.\]
We have thus appropriately bounded each term.
\end{proof}

Finally, we want to control the evolution of the solution on the interval $[\frac{1}{2},1]$. On this interval, $v =0$ and as such we are undergoing pure diffusion under the heat equation. The following lemma states that the diffusion under the heat equation on the box is equivalent to the diffusion on the torus up to a small error. This allows us to use the explicit form of the heat kernel on $\T^2$ in the proof of Theorem~\ref{thm:two-cell-dissipation}.

\begin{lemma}
    \label{lem:close-to-toroidal-heat}
   There exists $C>0$ so that for all boundary data $f\in L^\infty([0,1]\times\partial B)$, initial data $\theta_0 \in L^\infty(B),$ and $\kappa>0$, it holds that
    \[\big\|\sol_{0,1}^{0,\kappa,f} \theta_0 - \sol_{0,1}^{0,\kappa,\T^2} \theta_0\big\|_{L^1(B)} \leq C\big(\|\theta_0\|_{L^\infty(B)} + \|f\|_{L^\infty([0,1]\times\partial B)}\big)\kappa^{1/2} \log (\kappa^{-1}).\]
\end{lemma}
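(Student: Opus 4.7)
The plan is to use the stochastic (Feynman--Kac) representation of both solutions and observe that they coincide on the event that the underlying Brownian motion stays inside $B$. Fix $x \in B$ and let $W^\kappa_t$ be planar Brownian motion with diffusivity $\kappa$ started at $x$, lifted to $\R^2$. Let $\tau := \inf\{t \geq 0 : W^\kappa_t \notin B\}$ denote its first exit time from $B$. Since both operators $\sol_{0,1}^{0,\kappa,f}$ and $\sol_{0,1}^{0,\kappa,\T^2}$ correspond to pure diffusion, they admit the representations
\begin{align*}
\sol_{0,1}^{0,\kappa,f}\theta_0(x) &= \E\big[\theta_0(W^\kappa_1)\indc_{\tau > 1}\big] + \E\big[f(\tau, W^\kappa_\tau)\indc_{\tau \leq 1}\big], \\
\sol_{0,1}^{0,\kappa,\T^2}\theta_0(x) &= \E\big[\theta_0^{\mathrm{per}}(W^\kappa_1)\big],
\end{align*}
where $\theta_0^{\mathrm{per}}$ denotes the periodic extension of $\theta_0$ to $\R^2$.

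The key observation is that on the event $\{\tau > 1\}$ the lifted Brownian motion has not left $B$, so $W^\kappa_1 \in B$ and $\theta_0^{\mathrm{per}}(W^\kappa_1) = \theta_0(W^\kappa_1)$; the two solutions therefore coincide on this event. On the complementary event $\{\tau \leq 1\}$ each contribution is bounded in absolute value by $\|\theta_0\|_{L^\infty(B)} + \|f\|_{L^\infty([0,1] \times \partial B)}$, yielding the pointwise estimate
\[
    \big|\sol_{0,1}^{0,\kappa,f}\theta_0(x) - \sol_{0,1}^{0,\kappa,\T^2}\theta_0(x)\big| \leq \big(\|\theta_0\|_{L^\infty(B)} + \|f\|_{L^\infty([0,1]\times\partial B)}\big)\, \P\big(\tau \leq 1 \,\big|\, W^\kappa_0 = x\big).
\]

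It remains to estimate $\int_B \P(\tau \leq 1 \mid W^\kappa_0 = x)\, dx$. The reflection principle applied coordinatewise gives the standard Gaussian exit bound
\[
    \P\big(\tau \leq 1 \,\big|\, W^\kappa_0 = x\big) \leq \P\Big(\max_{t \leq 1}|W^\kappa_t - x| \geq \dist(x, \partial B)\Big) \leq C \exp\big(-c\, \dist(x,\partial B)^2 / \kappa\big)
\]
for universal $c, C > 0$. Since $\partial B$ is a rectifiable curve of finite length, one has $|\{x \in B : \dist(x, \partial B) \leq s\}| \leq C s$ for $s \in [0,1]$, and a coarea computation gives
\[
    \int_B \exp\big(-c\, \dist(x, \partial B)^2/\kappa\big)\, dx \leq C\int_0^\infty \exp\big(-c s^2/\kappa\big)\, ds \leq C \kappa^{1/2}.
\]
Combining these produces the desired $L^1$ estimate (in fact with a clean $\kappa^{1/2}$ rate; the $\log(\kappa^{-1})$ in the stated bound is loose slack, plausibly included to match the form of the other stability lemmas).

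The argument is essentially routine; the only technical caveat is that $B$ has corners, but since $\partial B$ remains a finite-length Lipschitz curve the tubular-neighborhood bound is unaffected, and the set of Brownian trajectories hitting a corner exactly has zero probability so does not contribute to the stochastic representation.
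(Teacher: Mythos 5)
Your proof is correct and follows essentially the same route as the paper's: both compare the two solutions via the stochastic representation, observe that the difference is supported on the event that the Brownian path reaches $\partial B$ before the final time, and integrate the resulting hitting probability over $B$. The only difference is in the last step, where you integrate the Gaussian tail $\exp(-c\,\dist(x,\partial B)^2/\kappa)$ directly and obtain the slightly sharper rate $C\kappa^{1/2}$, whereas the paper splits $B$ into an inner box and a boundary layer of width $\kappa^{1/2}\log(\kappa^{-1})$, which is where its logarithmic factor originates.
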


We now conclude Theorem~\ref{thm:two-cell-dissipation}. From the previous propositions, we know that (up to a vanishing error) $\sol_{0,1/2}^{v,\kappa,f} \Theta_0$ has the correct averages on each box $B_m + x_m, x_m \in \Pi_m$. Thus to conclude, we use straightforward bounds on the heat kernel on $\T^2$ (together with Lemma~\ref{lem:close-to-toroidal-heat}) to show that the diffusion essentially averages the solution on scale $5^{-m}$, thus giving that the solution at the final time is (asymptotically) equal to the constant $\frac{1}{2}.$

\begin{proof}[Proof of Theorem~\ref{thm:two-cell-dissipation}] Let $n$, $m$ and $\varphi$ be as in Propositions~\ref{prop:advection-noise-small}. Then the triangle inequality and the fact that the operator $\sol_{s,t}^{v,\kappa,0}$ is an $L^1$ contraction imply that
\[\big\|\sol_{0,1}^{v,\kappa,f} \Theta_0 - \tfrac{1}{2}\big\|_{L^1(B)}\leq \big\|\sol_{0,t_n}^{v,\kappa,f} \Theta_0 - \Theta_n\big\|_{L^1(B)}+\Big\|\sol_{t_n,1/2}^{v,\kappa,f} \Theta_n - \varphi\Big\|_{L^1(B)}+\Big\|\sol_{1/2,1}^{v,\kappa,f} \varphi - \tfrac{1}{2}\Big\|_{L^1(B)}.\]
Propositions~\ref{prop:close-to-transport} and~\ref{prop:advection-noise-small} imply that the first two terms on the right-hand side above are both bounded by some constant times $(1+\|f\|_{L^\infty})\kappa^{(1-\alpha)/12}.$ Thus, to conclude the theorem, it only remains to bound the last term.

Since $v=0$ when $t\in\big[\tfrac{1}{2},1\big]$, Lemma~\ref{lem:close-to-toroidal-heat} implies that
\[\big\|\sol_{1/2,1}^{v,\kappa,f} \varphi - \tfrac{1}{2}\big\|_{L^1(B)}\leq \Big\|\sol_{1/2,1}^{0,\kappa,\T^2} \varphi - \tfrac{1}{2}\Big\|_{L^1(B)}+C(1+\|f\|_{L^\infty})\kappa^{1/2}\log(\kappa^{-1}).\]
Since $\kappa^{1/2}\log(\kappa^{-1})\leq C\kappa^{(1-\alpha)/12},$ to conclude it suffices to show 
\[\Big\|\sol_{1/2,1}^{0,\kappa,\T^2} \varphi - \tfrac{1}{2}\Big\|_{L^1(B)} \leq  C\big(1 +\|f\|_{L^\infty([0,1]\times\partial B)}\big)\kappa^{(1-\alpha)/12}.\] We proceed by letting $\tilde\varphi:=\varphi-1/2$ so that $(\tilde\varphi)_{B_m+x_m}=0$ for all $x_m\in\Pi_m$ and
\[\Big\|\sol_{1/2,1}^{0,\kappa,\T^2}\varphi-\tfrac{1}{2}\Big\|_{L^1}=\|\Phi_{\kappa/2}*\tilde{\varphi}\|_{L^1},\]
where $\Phi_{t}$ denotes the fundamental solution to the heat equation on the torus. 

Expanding out the definition of the convolution we have that
\[\|\Phi_{\kappa/2}*\tilde{\varphi}\|_{L^1}=\sum_{x_m\in \Pi_m}\int_{x_m+B_m}\Bigg|\sum_{y_m\in\Pi_m}\int_{y_m+B_m}\Phi_{\kappa/2}(x-y)\tilde\varphi(y)\,dy\Bigg|\,dx.\]
Since $\tilde\varphi$ is has zero mean on $y_m+B_m$ for all $y_m \in \Pi_m$, this is equal to
\[\sum_{x_m\in \Pi_m}\int_{x_m+B_m}\Bigg|\sum_{y_m\in\Pi_m}\int_{y_m+B_m}\big(\Phi_{\kappa/2}(x-y)-(\Phi_{\kappa/2})_{x_m-y_m+B_{m-1}}\big)\tilde\varphi(y)\,dy\Bigg|\,dx,\]
which is in turn bounded by
\begin{equation}\label{eq:Holder_bound}
\sum_{x_m,y_m\in\Pi_m}\int_{y_m+B_m}|\tilde\varphi(y)|\int_{x_m+B_m}\big|\Phi_{\kappa/2}(x-y)-(\Phi_{\kappa/2})_{x_m-y_m+B_{m-1}}\big|\,dx\,dy.\end{equation}
Using that $y\in y_m+B_m$ and the Poincar\'e inequality
\begin{align*}
\int_{x_m+B_m}\big|\Phi_{\kappa/2}(x-y)-(\Phi_{\kappa/2})_{x_m-y_m+B_{m-1}}\big|\,dx&\leq \int_{x_m-y_m+B_{m-1}}\big|\Phi_{\kappa/2}(x)-(\Phi_{\kappa/2})_{x_m-y_m+B_{m-1}}\big|\,dx
\\&\leq C5^{-m}\int_{x_m-y_m+B_{m-1}}|\nabla\Phi_{\kappa/2}(x)|\,dx,
\end{align*}
where we note that the integrals on the right-hand side are over the larger box $B_{m-1}$. Inserting this into~\eqref{eq:Holder_bound}, we thus find that
\begin{align*}
\Big\|\sol_{\frac{1}{2},1}^{0,\kappa,\T^2}\varphi-\tfrac{1}{2}\Big\|_{L^1}&\leq C5^{-m}\sum_{x_m,y_m\in\Pi_m}\int_{y_m+B_m}|\tilde\varphi(y)|\int_{x_m-y_m+B_{m-1}}\big|\nabla\Phi_{\kappa/2}(x)\big|\,dx\,dy
\\&= C5^{-m}\|\nabla\Phi_{\kappa/2}\|_{L^1}\|\tilde\varphi\|_{L^1}
\\&\leq C5^{-m}\kappa^{-1/2}\|\tilde\varphi\|_{L^1}.
\end{align*}
By construction, $\|\tilde\varphi\|_{L^1}\leq C(1+\|f\|_{L^\infty})$ while
\[5^{-m}\kappa^{-1/2}\leq C\kappa^{\frac{(1-\alpha)}{4(\alpha+2)}}\leq C\kappa^{(1-\alpha)/12}.\]
This concludes the theorem.
\end{proof}

We lastly note the following corollary---whose proof is direct from Theorem~\ref{thm:two-cell-dissipation}---stated in the form we will need in Section~\ref{sec:universal}. 

\begin{corollary}\label{cor:two-cell-dissipation}
There exists $C(\alpha)>0$ so that for all boundary data $f\in L^\infty([0,1]\times \partial B)$ and $a_1,a_0\in\R$, letting $\theta_0:=(a_1-a_0)\Theta_0+a_0$ it holds that
\[\big\|\sol_{0,1}^{v,\kappa,f}\theta_0-\tfrac{a_0+a_1}{2}\big\|_{L^1(B)}\leq C\big(|a_1-a_0| +\|f-a_0\|_{L^\infty([0,1]\times \partial B)}\big)\kappa^{(1-\alpha)/12}.\]
\end{corollary}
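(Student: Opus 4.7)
The plan is to exploit the affine nature of the solution map $\sol_{0,1}^{v,\kappa,f}\theta_0$ in the pair $(\theta_0,f)$ in order to reduce to Theorem~\ref{thm:two-cell-dissipation}. Specifically, since constants are preserved by the drift-diffusion equation with matching constant boundary data (the vector field is tangent to $\partial B$ and divergence-free so $\sol_{0,1}^{v,\kappa,a_0}(a_0)=a_0$), I would first write
\[
\sol_{0,1}^{v,\kappa,f}\theta_0 - a_0 \;=\; \sol_{0,1}^{v,\kappa,f-a_0}(\theta_0 - a_0).
\]
Since $\theta_0 - a_0 = (a_1-a_0)\Theta_0$, linearity gives
\[
\sol_{0,1}^{v,\kappa,f}\theta_0 - a_0 \;=\; (a_1-a_0)\,\sol_{0,1}^{v,\kappa,g}\Theta_0,\qquad g := \tfrac{f-a_0}{a_1-a_0},
\]
whenever $a_1 \neq a_0$.

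Next, I would apply Theorem~\ref{thm:two-cell-dissipation} to $\sol_{0,1}^{v,\kappa,g}\Theta_0$ with boundary data $g$ to conclude
\[
\big\|\sol_{0,1}^{v,\kappa,g}\Theta_0 - \tfrac{1}{2}\big\|_{L^1(B)} \leq C\Big(1 + \tfrac{\|f-a_0\|_{L^\infty}}{|a_1-a_0|}\Big)\kappa^{(1-\alpha)/12}.
\]
Multiplying both sides by $|a_1-a_0|$ and using that $a_0 + \tfrac{a_1-a_0}{2} = \tfrac{a_0+a_1}{2}$ immediately yields the claimed estimate.

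There is no real obstacle here, only one minor bookkeeping issue: the degenerate case $a_1 = a_0$, where the scaling step breaks. In that case $\theta_0 \equiv a_0$ is constant, so by the same affine decomposition, $\sol_{0,1}^{v,\kappa,f}\theta_0 - \tfrac{a_0+a_1}{2} = \sol_{0,1}^{v,\kappa,f-a_0}(0)$, and the maximum principle for drift-diffusion equations with divergence-free drift tangent to $\partial B$ gives the pointwise bound $\|\sol_{0,1}^{v,\kappa,f-a_0}(0)\|_{L^\infty(B)} \leq \|f - a_0\|_{L^\infty([0,1]\times\partial B)}$, hence $\|\sol_{0,1}^{v,\kappa,f-a_0}(0)\|_{L^1(B)} \leq |B|\,\|f-a_0\|_{L^\infty}$, which is even stronger than what is required.
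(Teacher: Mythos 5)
Your main computation is correct and is exactly the route the paper intends (the paper explicitly remarks that the corollary is ``direct from Theorem~\ref{thm:two-cell-dissipation}''): subtract the constant solution $\sol_{0,1}^{v,\kappa,a_0}a_0=a_0$, use joint linearity in $(\theta_0,f)$ and homogeneity to reduce to $\Theta_0$ with boundary data $g=(f-a_0)/(a_1-a_0)$, apply the theorem, and rescale.

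The one genuine error is your treatment of the degenerate case $a_1=a_0$. The maximum-principle bound $\|\sol_{0,1}^{v,\kappa,f-a_0}0\|_{L^1(B)}\le |B|\,\|f-a_0\|_{L^\infty}$ is \emph{not} stronger than what is required: the corollary demands $C\|f-a_0\|_{L^\infty}\,\kappa^{(1-\alpha)/12}$, which carries a factor tending to $0$ as $\kappa\to 0$, so no choice of $C$ makes $|B|\,\|f-a_0\|_{L^\infty}$ imply it for all $\kappa>0$. (A direct proof of the vanishing rate for constant data is not entirely trivial either: the crude boundary-hitting estimate of Lemma~\ref{lem:constant_change} applied on all of $[0,1]$ only yields an $O(\|v\|_{L^1_tL^\infty_x})=O(1)$ error.) The fix is a one-line limiting argument: apply your estimate with $a_1$ replaced by $a_0+\eps$, note that the corresponding initial data differ by $\eps\Theta_0$ so the solutions differ by at most $\eps|B|$ in $L^1$ by the $L^1$-contractivity of $\sol_{0,1}^{v,\kappa,0}$ and linearity, and let $\eps\to 0$. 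With that replacement the proof is complete.
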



\section{Universal total dissipation}

\label{sec:universal}

We now move on to the proof of Theorem~\ref{thm:total_dissipation}, following closely the sketch given in Subsection~\ref{subsec:universal-sketch}. There is however an additional technical issue that we did not address in the proof sketch. Note that the central step of the argument relies on iteratively using Theorem~\ref{thm:two-cell-dissipation} on sub-boxes, for which one needs to control the $L^\infty$ norm of the solution on the boundaries of the sub-boxes. However, Theorem~\ref{thm:total_dissipation} is stated for $TV$ initial data, so we do not immediately get $L^\infty$ control of the solution. Using that $V = 0$ on the interval $[0,\frac{1}{2}]$ together with estimates on the heat kernel, we get $\|\sol_{0,1/2}^{V,\kappa,\T^2} \theta_0\|_{L^\infty} \leq C \kappa^{-1/2} \|\theta_0\|_{L^1}.$ Unfortunately, the rate from Theorem~\ref{thm:two-cell-dissipation}, $\kappa^{(1-\alpha)/12}$, cannot compete with $\kappa^{-1/2}$, so we have to be more careful than brutally using the $L^1 \to L^\infty$ smoothing of the heat equation.

This is accomplished with the following lemma. The pure diffusion acting on the interval $[0,\frac{1}{2}]$ causes $\sol_{0,1/2}^{V,\kappa,\T^2} \theta_0$ to be $L^1$ close to a function that is piecewise constant on a grid with sidelength $\ll \kappa^{1/2}$. Throughout the interval $[\frac{1}{2},1]$---as described in the sketch given in Subsection~\ref{subsec:universal-sketch}---$\sol_{0,t}^{V,\kappa,\T^2}$ will continue to be $L^1$ close to a piecewise constant function. The lemma below gives that local $L^\infty$ norms of solutions to drift-diffusion equations are controlled locally by the $L^\infty$ norm of the initial data on a slightly larger box together with an exponential discount on the $L^\infty$ norm of ``far'' initial data. This allows us to exploit the local smoothness given by the solution being (essentially) piecewise constant to control the local $L^\infty$ norm on the right-hand side of the estimate and use the brutal $\kappa^{-1/2}$ bound to control the second term, as the exponential discount dominates.

\begin{lemma}\label{lem:box_bounds}
   There exists $C>0$ so that for any divergence-free vector field $u \in L^1\big([0,1], L^\infty(\T^2)\big)$, initial datum $\theta_0 \in L^\infty(\T^2)$, $\kappa>0, \ell \in (0,1/2]$, and $L\geq 2 \|u\|_{L^1_t L^\infty_x},$ it holds that
    \[\big\|\sol^{u,\kappa,\T^2}_{0,1} \theta_0\big\|_{L^\infty(\sqrt{2}[-\ell,\ell]\times[-\ell,\ell])} \leq \|\theta_0\|_{L^\infty(\sqrt{2}[-\ell,\ell]\times[-\ell,\ell]+[-L,L]^2)} +C \exp(-C^{-1}\kappa^{-1} L^2)\|\theta_0\|_{L^\infty(\T^2)}.\]
\end{lemma}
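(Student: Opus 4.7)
The plan is to use the stochastic representation of the drift-diffusion equation, which is used throughout Appendix~\ref{appendix:stability}. Lift $\theta_0$ and $u$ periodically from $\T^2$ to $\R^2$, and write $\tilde \theta_0, \tilde u$ for the lifts. Then for each $x \in \T^2$ (identified with a representative in $\R^2$), we have the representation
\[\sol^{u,\kappa,\T^2}_{0,1}\theta_0(x) = \E\bigl[\tilde\theta_0(Y^x_1)\bigr],\]
where $Y^x$ is the $\R^2$-valued backward stochastic trajectory solving $Y^x_0 = x$ and $dY^x_s = -\tilde u(1-s, Y^x_s)\,ds + \sqrt{2\kappa}\,dB_s$ for a standard 2D Brownian motion $B$. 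We emphasize that we work with $Y^x$ on $\R^2$ (not modded out) so that distances between $Y^x_1$ and $x$ are honest Euclidean distances, while the evaluation $\tilde\theta_0(Y^x_1) = \theta_0(Y^x_1 \bmod \T^2)$ is well-defined by periodicity.

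Fix $x \in \sqrt{2}[-\ell,\ell]\times [-\ell,\ell]$ and let $Q_x := x + [-L,L]^2$. Splitting the expectation according to whether $Y^x_1 \in Q_x$,
\[\bigl|\sol^{u,\kappa,\T^2}_{0,1}\theta_0(x)\bigr| \leq \E\bigl[|\tilde\theta_0(Y^x_1)| \indc_{Y^x_1 \in Q_x}\bigr] + \|\theta_0\|_{L^\infty(\T^2)}\, \P(Y^x_1 \notin Q_x).\]
For the first term, observe that $Q_x \subset \sqrt{2}[-\ell,\ell]\times[-\ell,\ell] + [-L,L]^2$ (as a subset of $\R^2$), and the $L^\infty$ norm of $\tilde\theta_0$ on the latter set equals $\|\theta_0\|_{L^\infty(\sqrt{2}[-\ell,\ell]\times[-\ell,\ell]+[-L,L]^2)}$ (interpreting the set in $\T^2$ via the projection) by periodicity. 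This bounds the first term by the desired quantity.

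The remaining task is to show that $\P(Y^x_1 \notin Q_x) \leq C \exp(-C^{-1}\kappa^{-1} L^2)$. From the SDE we have the deterministic bound
\[|Y^x_1 - x|_\infty \leq \int_0^1 \|\tilde u(1-s,\cdot)\|_{L^\infty}\,ds + \sqrt{2\kappa}\, |B_1|_\infty = \|u\|_{L^1_t L^\infty_x} + \sqrt{2\kappa}\,|B_1|_\infty.\]
Thus $\{Y^x_1 \notin Q_x\} \subseteq \{|Y^x_1 - x|_\infty > L\} \subseteq \{\sqrt{2\kappa}\,|B_1|_\infty > L - \|u\|_{L^1_t L^\infty_x}\}$. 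Using the assumption $L \geq 2 \|u\|_{L^1_t L^\infty_x}$, the right-hand quantity is at least $L/2$, so
\[\P(Y^x_1 \notin Q_x) \leq \P\bigl(|B_1|_\infty > L/(2\sqrt{2\kappa})\bigr) \leq C \exp(-C^{-1} \kappa^{-1} L^2)\]
by the standard Gaussian tail estimate in two dimensions. Combining everything and taking the supremum over $x \in \sqrt{2}[-\ell,\ell]\times [-\ell,\ell]$ gives the lemma.

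The only real obstacle is keeping the bookkeeping straight between $\R^2$ and $\T^2$: the stochastic trajectory $Y^x$ must be tracked on $\R^2$ so that $L$ is a genuine displacement, while the $L^\infty$ norms on the right-hand side are interpreted in $\T^2$. Once this is set up, the remainder is a one-shot application of the Gaussian tail bound combined with the drift-budget condition $L \geq 2\|u\|_{L^1_t L^\infty_x}$.
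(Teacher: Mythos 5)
Your proposal is correct and follows essentially the same route as the paper's proof: the stochastic characteristic representation, a split of the expectation according to whether the endpoint of the trajectory leaves an $L$-neighborhood, the deterministic drift bound $\|u\|_{L^1_tL^\infty_x}\le L/2$, and a Gaussian tail estimate. The only cosmetic difference is that you use $|B_1|$ where the paper uses $\sup_{t\in[0,1]}|w_t|$; both yield the same exponential bound since only the endpoint of the trajectory matters here.
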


In the proposition below, we control the solution on the time interval $[t_n,1]$. It is on this interval that we iteratively use Theorem~\ref{thm:two-cell-dissipation} (or more precisely Corollary~\ref{cor:two-cell-dissipation}) to give that---provided the solution is piecewise constant on the $A_n + x_n, x_n \in \Lambda_n$ sub-boxes at time $t_n$---the solution is iteratively averaged to be (essentially) constant on $\T^2$ at time $1$. Note particularly that the $L^\infty$ norm on the right-hand side comes paired with an exponential discount, as discussed above.

In Proposition~\ref{prop:2_cell_averaging}, which in some sense mirrors Proposition~\ref{prop:close-to-transport} of Section~\ref{sec:two-cell}, we take $n$ (essentially) as in Proposition~\ref{prop:close-to-transport}. This is slightly suboptimal but appreciably simplifies statements and computations at the cost of a constant factor in the algebraic rate.

\begin{proposition}
\label{prop:2_cell_averaging}
    There exists $C(\alpha)>0$ so that for all $\kappa>0$, letting $n:=\big\lceil\frac{3}{2(\alpha+2)}\log_{\sqrt{2}}(\kappa^{-1})\big\rceil$, if $\varphi\in L^1(\T^2)$ is constant on $A_n+x_n$ for all $x_n\in\Lambda_n$, then
\[\Big\|\sol_{s_n,1}^{V,\kappa,\T^2}\varphi-(\varphi)_{\T^2}\Big\|_{L^1(\T^2)}\leq C\Big(\|\varphi\|_{L^1(\T^2)}+\|\varphi\|_{L^\infty(\T^2)}\exp\big({-C^{-1}}\kappa^{-(1-\alpha)/6}\big)\Big)\kappa^{\frac{(1-\alpha)^2}{72}}.\]
\end{proposition}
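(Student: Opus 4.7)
The plan is to mirror Proposition~\ref{prop:close-to-transport} at the dyadic scale: instead of iterating the Alberti--Crippa--Mazzucato cascade, I iterate the (rescaled) two-cell dissipator of Corollary~\ref{cor:two-cell-dissipation} along the cascade $A_n \to A_{n-1}\to\cdots\to A_0=B$. Let $\psi_j$ be the function constant on each $A_j+x_j$ with value equal to the average of $\varphi$ there, so that $\psi_n=\varphi$, $\psi_0=(\varphi)_{\T^2}$, and $\|\psi_j\|_{L^p}\leq\|\varphi\|_{L^p}$. Telescoping and using the $L^1$-contractivity of $\sol^{V,\kappa,\T^2}_{s_j,1}$ gives
\[\Big\|\sol^{V,\kappa,\T^2}_{s_n,1}\varphi-(\varphi)_{\T^2}\Big\|_{L^1(\T^2)}\leq\sum_{j=0}^{n-1}\Big\|\sol^{V,\kappa,\T^2}_{s_{j+1},s_j}\psi_{j+1}-\psi_j\Big\|_{L^1(\T^2)}.\]

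For each $j$, I decompose the $L^1$-norm across the sub-boxes $A_j+x_j$. On each such sub-box, the change of variables $(s,y)\mapsto(\sigma_j s+s_{j+1},\mathcal{R}^j y+x_j)$ combined with~\eqref{eq:V-def} reduces the restricted problem to a drift-diffusion problem on $B$ with drift $v$, initial datum of the two-cell form $(b_1-b_0)\Theta_0+b_0$ (where $b_0,b_1$ are the two constant values of $\psi_{j+1}$ on the two halves), effective diffusivity $\tilde\kappa_j:=2^j\sigma_j\kappa\asymp 2^{(\alpha+1)j/2}\kappa$, and boundary data $f^{x_j}$ equal to the trace of $\sol^{V,\kappa,\T^2}_{s_{j+1},\cdot}\psi_{j+1}$ on $\partial(A_j+x_j)$. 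Corollary~\ref{cor:two-cell-dissipation} then yields
\[\Big\|\sol^{V,\kappa,\T^2}_{s_{j+1},s_j}\psi_{j+1}-\psi_j\Big\|_{L^1(A_j+x_j)}\leq C\cdot 2^{-j}\Big(|b_1-b_0|+\|f^{x_j}-b_0\|_{L^\infty}\Big)\tilde\kappa_j^{(1-\alpha)/12}.\]
The interior contribution $\sum_{x_j}|b_1-b_0|\cdot 2^{-j}$ collapses to $\lesssim\|\psi_{j+1}\|_{L^1}\leq\|\varphi\|_{L^1}$ by the piecewise-constant structure.

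For the boundary term I apply Lemma~\ref{lem:box_bounds} on the time subinterval $[s_{j+1},t]$ (length $\leq\sigma_j$) with buffer $L_j:=c\,2^{-j/2}$, which is simultaneously comparable to the sub-box sidelength and to the advection length $\|V\|_{L^1([s_{j+1},s_j],L^\infty)}\asymp 2^{-j/2}$. After the time-rescaling the lemma's exponent becomes $(\kappa\sigma_j)^{-1}L_j^2\asymp \kappa^{-1}2^{-(\alpha+1)j/2}$ (using $\sigma_j^{-1}\asymp 2^{(1-\alpha)j/2}$). The resulting $L^\infty$ bound has two pieces: the local term $\|\psi_{j+1}\|_{L^\infty(A_j+x_j+[-L_j,L_j]^2)}$, which sums against $2^{-j}$ to $\lesssim\|\varphi\|_{L^1}$ because the enlarged box meets only $O(1)$ sub-boxes on which $\psi_{j+1}$ is constant; and an exponentially small piece $\|\varphi\|_{L^\infty}\exp(-C^{-1}\kappa^{-1}2^{-(\alpha+1)j/2})$. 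Summing over $j\in\{0,\ldots,n-1\}$, the $\|\varphi\|_{L^1}$-part is a geometric series in $\tilde\kappa_j^{(1-\alpha)/12}$ dominated by its $j=n$ value $\kappa^{(1-\alpha)^2/(24(\alpha+2))}\leq\kappa^{(1-\alpha)^2/72}$; the $\|\varphi\|_{L^\infty}$-part's summand is monotone in $j$ and at $j=n$ contributes $\exp(-C^{-1}\kappa^{-(1-\alpha)/(2(\alpha+2))})\kappa^{(1-\alpha)^2/(24(\alpha+2))}$, which, since $2(\alpha+2)\leq 6$, is absorbed into $\exp(-C^{-1}\kappa^{-(1-\alpha)/6})\kappa^{(1-\alpha)^2/72}$.

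The main obstacle is the boundary control: the naive estimate $\exp(-C^{-1}\kappa^{-1}L_j^2)$ obtained without tracking the short time-length $\sigma_j$ of the $j$-th stage is much too weak at the finest scale $j=n$ (it degenerates to $1$), and only after properly including $\sigma_j$ in the rescaling of Lemma~\ref{lem:box_bounds} does one recover the superpolynomial discount needed to absorb $\|\varphi\|_{L^\infty}$. A secondary subtlety is that the local-$L^\infty$ bound must be summed against the weight $2^{-j}$ via a finite-overlap argument (each $A_{j+1}$-cell lies in $O(1)$ enlarged sub-boxes) in order to produce $\|\varphi\|_{L^1}$ rather than the much larger $\|\varphi\|_{L^\infty}$.
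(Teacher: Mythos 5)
Your proposal is correct and follows essentially the same route as the paper's proof: the same telescoping over the piecewise-constant averages $\varphi_j$, the same space-time rescaling onto $B$ with effective diffusivity $\sigma_j 2^j\kappa$ to invoke Corollary~\ref{cor:two-cell-dissipation}, the same application of Lemma~\ref{lem:box_bounds} with buffer $L\asymp 2^{-j/2}$ to split the boundary data into a local piece (summing to $\|\varphi\|_{L^1}$ by finite overlap) and an exponentially discounted $\|\varphi\|_{L^\infty}$ piece, and the same geometric summation dominated by the $j=n$ term. The numerology (exponent $\kappa^{-1}2^{-(\alpha+1)j/2}$, the bound $\sigma_n2^n\kappa\lesssim\kappa^{(1-\alpha)/6}$, and the final rate $\kappa^{(1-\alpha)^2/72}$) matches the paper's computation in~\eqref{eq:algebraic-rate-comp}.
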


\begin{proof}
For $0\leq j\leq n$,  we let $a_{x_j}:=(\varphi)_{A_j+x_j}$ for all $x_j\in \Lambda_j$ and
\[\varphi_j(x):=\sum_{x_j\in \Lambda_j}a_{x_j}\indc_{A_j+x_j}(x).\]
Thus $\varphi_n=\varphi$ by assumption and $\varphi_0=(\varphi)_{\T^2}$. Rewriting the difference between $\sol_{s_n,1}^{V,\kappa,\T^2}\varphi$ and $(\varphi)_{\T^2}$ as a telescoping sum and using that $\sol_{s,t}^{V,\kappa,\T^2}$ is an $L^1$ contraction we find that
\begin{equation}
\Big\|\sol_{s_n,1}^{V,\kappa,\T^2}\varphi-(\varphi)_{\T^2}\Big\|_{L^1}=\Bigg\|\sum_{j=0}^{n-1}\sol_{s_{j+1},1}^{V,\kappa,\T^2}\varphi_{j+1}-\sol_{s_j,1}^{V,\kappa,\T^2}\varphi_{j}\Bigg\|_{L^1} \leq\sum_{j=0}^{n-1}\Big\|\sol_{s_{j+1},s_j}^{V,\kappa,\T^2}\varphi_{j+1}-\varphi_{j}\Big\|_{L^1}.\label{eq:time_splits}
\end{equation}
Fixing $j$ and splitting the integral over boxes at scale $2^{-j/2}$
\begin{equation}\label{eq:small_box_split}
\Big\|\sol_{s_{j+1},s_j}^{V,\kappa,\T^2}\varphi_{j+1}-\varphi_{j}\Big\|_{L^1(\T^2)}=\sum_{x_j\in\Lambda_j}\big\|\sol_{s_{j+1},s_j}^{V,\kappa,\T^2}\varphi_{j+1}-\varphi_{j}\big\|_{L^1(A_j+x_j)}.
\end{equation}
We will bound each term in the above sum using Corollary~\ref{cor:two-cell-dissipation}.

Letting $\tilde\varphi(x):=\varphi_{j+1}(\mathcal{R}^jx+x_j)$ and 
$f_{x_j}(t,x):=\sol_{s_{j+1},(s_{j+1}+\sigma_jt)}^{V,\kappa,\T^2}\varphi_{j+1}(\mathcal{R}^jx+x_j)$ for any $x_j\in \Lambda_j$, by rescaling in time and space it holds that
\[\sol_{s_{j+1},s_j}^{V,\kappa,\T^2}\varphi_{j+1}(R^jx+x_j)=\sol_{0,1}^{v,\sigma_j2^j\kappa,f_{x_j}}\tilde\varphi(x)\]
for all $x\in B$. By the definition of $\varphi_{j+1}$, there exists $a_{x_j,1},a_{x_j,0}\in\R$ so that
\[\tilde\varphi=(a_{x_j,1}-a_{x_j,0})\Theta_0+a_{x_j,0}\]
and $a_{x_j}=\tfrac{1}{2}\big(a_{x_j,1}+a_{x_j,2}\big).$ Since $\varphi_j$ is constantly equal to $a_{x_j}$ on $A_j+x_j$, Corollary~\ref{cor:two-cell-dissipation} implies that
\begin{align*}
\|\sol_{s_{j+1},s_j}^{V,\kappa,\T^2}\varphi_{j+1}-\varphi_{j}\|_{L^1(A_j+x_j)}&=2^{-j}\|\sol_{0,1}^{v,\sigma_j2^j\kappa,f_{x_j}}\tilde\varphi(x)-a_{x_j}\|_{L^1(B)}
\\&\leq C2^{-j}(|a_{x_j,1}-a_{x_j,0}| +\|f_{x_j}-a_{x_j,0}\|_{L^\infty})(\sigma_j2^j\kappa)^{(1-\alpha)/12}\notag
\\&\leq C2^{-j}(|a_{x_j,1}|+|a_{x_j,0}|+\|f_{x_j}\|_{L^\infty})(\sigma_j2^j\kappa)^{(1-\alpha)/12}.
\end{align*}
Combining this with~\eqref{eq:small_box_split}, we've found that
\begin{equation}\label{eq:j_to_j+1_bound_2}
\Big\|\sol_{s_{j+1},s_j}^{V,\kappa,\T^2}\varphi_{j+1}-\varphi_{j}\Big\|_{L^1(\T^2)}\leq C\bigg(\sum_{x_j\in\Lambda_j}2^{-j}(|a_{x_j,1}|+|a_{x_j,0}|+\|f_{x_j}\|_{L^\infty}) \bigg)(\sigma_j2^j\kappa)^{(1-\alpha)/12}.
\end{equation}
We thus need to appropriately bound the sum on the right-hand side.

Choosing $L=C2^{-j/2}$ (with $C$ chosen so that $L \geq 2 \|V\|_{L^1([s_{j+1}, s_j], L^\infty(\T^2))}$), after rescaling in time, Lemma~\ref{lem:box_bounds} implies that
\begin{align*}
\|f_{x_j}\|_{L^\infty}&\leq \Big\|\sol_{s_{j+1},t}^{V,\kappa,\T^d}\varphi_{j+1}\Big\|_{L^\infty([s_{j+1},s_j]\times (A_j+x_j))}
\\&\leq\|\varphi_{j+1}\|_{L^\infty(A_j+[-L,L]^2+x_j)}+C\|\varphi_{j+1}\|_{L^\infty(\T^2)}\exp(-C^{-1}(\sigma_j 2^j\kappa)^{-1}).    
\end{align*}
Since $|a_{x_j,1}|+|a_{x_j,0}|\leq 2\|\varphi_j\|_{L^\infty(A_j+[-L,L]^2+x_j)}$ and $\|\varphi_{j+1}\|_{L^\infty}\leq \|\varphi\|_{L^\infty}$, this implies that
\begin{align*}
&\sum_{x_j\in\Lambda_j}2^{-j}(|a_{x_j,1}|+|a_{x_j,0}|+\|f_{x_j}\|_{L^\infty})
\\&\qquad\leq 2^{-j}\sum_{x_j\in\Lambda_j}\|\varphi_{j+1}\|_{L^\infty(A_j+[-L,L]^2+x_j)}+C\|\varphi_{j+1}\|_{L^\infty}\exp(-C^{-1}(\sigma_j2^j\kappa)^{-1})
\\&\qquad\leq C\Big(\|\varphi_{j+1}\|_{L^1} +\|\varphi_{j+1}\|_{L^\infty}\exp(-C^{-1}(\sigma_j2^j\kappa)^{-1})\Big)
\\&\qquad\leq C\Big(\|\varphi\|_{L^1} +\|\varphi\|_{L^\infty}\exp(-C^{-1}(\sigma_j2^j\kappa)^{-1})\Big)
\end{align*}
where the second inequality follows by the definitions of $\varphi_{j+1}$ and $L$. 
With~\eqref{eq:j_to_j+1_bound_2} this shows that
\begin{equation}\label{eq:j_to_j+1_bound_3}
\Big\|\sol_{s_{j+1},s_j}^{V,\kappa,\T^2}\varphi_{j+1}-\varphi_{j}\Big\|_{L^1}\leq C\Big(\|\varphi\|_{L^1}+\|\varphi\|_{L^\infty}\exp\big({-C}(\sigma_j2^j\kappa)^{-1}\big)\Big)(\sigma_j2^j\kappa)^{(1-\alpha)/12}   
\end{equation}
for all $j$.

Using that $j\leq n$ and the definitions of $\sigma_j$ and $n$ we have the inequalities 
\begin{equation}
\label{eq:algebraic-rate-comp}
\sigma_j 2^j \kappa \leq \sigma_n 2^n \kappa \leq C2^{(\alpha+1)n/2} \kappa \leq C \kappa^{- \frac{3\alpha + 3}{2(\alpha+2)} +1} = C \kappa^{\frac{1-\alpha}{2(\alpha+2)}} \leq C \kappa^{(1-\alpha)/6}.\end{equation}
This implies that $\exp(-C^{-1}(\sigma_j2^j\kappa)^{-1})\leq \exp(-C^{-1} \kappa^{-(1-\alpha)/6})$ uniformly over $j$. Using this and inserting~\eqref{eq:j_to_j+1_bound_3} into~\eqref{eq:time_splits}, in total we find that
\[\big\|\sol_{s_n,1}^{V,\kappa,\T^2}\varphi-(\varphi)_{\T^2}\big\|_{L^1}\leq C\Big(\|\varphi\|_{L^1}+\|\varphi\|_{L^\infty}\exp(-C^{-1} \kappa^{-(1-\alpha)/6})\Big) \sum_{j=0}^{n-1}(\sigma_j2^j\kappa)^{(1-\alpha)/12}.\]
Since
\[\sum_{j=0}^{n-1}(\sigma_j2^j\kappa)^{(1-\alpha)/12}\leq C
(\sigma_n2^n\kappa)^{(1-\alpha)/12}\leq  C\kappa^{\frac{(1-\alpha)^2}{72}},\]
this concludes the proposition.
\end{proof}

The following proposition controls the evolution of the solution on the time interval $[\frac{1}{2},t_n]$. We show that the action of the drift and diffusion is small enough that, since the solution is starting as piecewise constant on boxes of sidelength $2^{-m/2}$, the solution remains (essentially) piecewise constant on the same boxes.

Proposition~\ref{prop:stays_near_constant} in some sense mirrors Proposition~\ref{prop:advection-noise-small} of Section~\ref{sec:two-cell}. As such---under the same considerations as in the choice of $n$ in Proposition~\ref{prop:2_cell_averaging}---we choose $m$ (essentially) as in Proposition~\ref{prop:advection-noise-small}.

\begin{proposition}\label{prop:stays_near_constant}
         There exists $C(\alpha)>0$ so that for all $\kappa>0$, letting
         \[n:=\big\lceil\tfrac{3}{2(\alpha+2)}\log_{\sqrt{2}}(\kappa^{-1})\big\rceil\text{ and } m :=\big\lceil\tfrac{\alpha+5}{4(\alpha+2)} \log_{\sqrt{2}}(\kappa^{-1})\big\rceil,\]
         if $\varphi\in L^1(\T^2)$ is constant on $A_m+x_m$ for all $x_m\in\Lambda_m$, then
\[\Big\|\sol^{V,\kappa,\T^2}_{1/2,s_n}\varphi-\varphi\Big\|_{L^1(\T^2)}\leq C\Big(\|\varphi\|_{L^1(\T^2)}+\|\varphi\|_{L^\infty(\T^2)}\exp\big({-C}\kappa^{- (1-\alpha)/6}\big)\Big) \kappa^{(1-\alpha)/12}.\]
\end{proposition}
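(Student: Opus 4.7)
The plan is to mirror Proposition~\ref{prop:advection-noise-small} from Section~\ref{sec:two-cell} in the same way that Proposition~\ref{prop:2_cell_averaging} mirrored Proposition~\ref{prop:close-to-transport}. Since $\varphi$ is already piecewise constant on the grid $\{A_m+x_m\}_{x_m\in\Lambda_m}$, writing $a_{x_m}$ for its value on $A_m+x_m$, one decomposes
\[\big\|\sol^{V,\kappa,\T^2}_{1/2,s_n}\varphi-\varphi\big\|_{L^1(\T^2)} = \sum_{x_m \in \Lambda_m}\big\|\sol^{V,\kappa,\T^2}_{1/2,s_n}\varphi - a_{x_m}\big\|_{L^1(A_m+x_m)},\]
and interprets the restriction of $\sol$ to each $A_m+x_m$ as the solution of a drift-diffusion problem with constant initial datum $a_{x_m}$ and boundary datum $f_{x_m}$ inherited from $\sol$ on $\partial(A_m+x_m)$. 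Each local term is then controlled by the assumed stability estimate Lemma~\ref{lem:constant_change}, which plays the same role for constant-on-a-box initial data that Lemma~\ref{lem:mean-preservation} played in Proposition~\ref{prop:advection-noise-small}.

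After rescaling $A_m+x_m$ to unit size (spatial scaling by $2^{m/2}$) and $[1/2, s_n]$ to $[0,1]$ (temporal scaling by $\sigma_n$), Lemma~\ref{lem:constant_change} produces a bound of the form
\[C\,2^{-m}\big(|a_{x_m}| + \|f_{x_m}\|_{L^\infty}\big)\big(\|\widetilde V\|_{L^1_t L^\infty_x} + \widetilde\kappa^{1/2}\log\widetilde\kappa^{-1}\big),\]
where $\widetilde V$ and $\widetilde\kappa$ are the rescaled drift and diffusivity. Using the definition of $V$ together with Item~\ref{item:L^infty_bound} of Theorem~\ref{thm:ACM_field}, each slab $[s_{j+1},s_j]$ contributes $\lesssim 2^{-j/2}$ to $\|V\|_{L^1([1/2,s_n], L^\infty(\T^2))}$, so summing $j\geq n$ gives $\|V\|_{L^1([1/2,s_n], L^\infty)} \lesssim 2^{-n/2}$. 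With $s_n - 1/2 \asymp \sigma_n \asymp 2^{(\alpha-1)n/2}$, one finds $\|\widetilde V\|_{L^1_tL^\infty_x} \lesssim 2^{(m-n)/2}$ and $\widetilde\kappa^{1/2} \lesssim (\sigma_n 2^m\kappa)^{1/2}$; the algebra paralleling~\eqref{eq:algebraic-rate-comp} in the proof of Proposition~\ref{prop:2_cell_averaging} shows that for the chosen $n$ and $m$ both scales are bounded by $C\kappa^{(1-\alpha)/12}$.

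For the boundary term, I apply Lemma~\ref{lem:box_bounds} with $L \asymp 2^{-n/2}$ (chosen so $L \geq 2\|V\|_{L^1_tL^\infty_x}$) on the slightly enlarged box $A_m+x_m+[-L,L]^2$, obtaining
\[\|f_{x_m}\|_{L^\infty} \leq \|\varphi\|_{L^\infty(A_m+x_m+[-L,L]^2)} + C\|\varphi\|_{L^\infty(\T^2)}\exp\big({-}C^{-1}\kappa^{-(1-\alpha)/6}\big),\]
the exponent arising from $(\sigma_n 2^n\kappa)^{-1} \gtrsim \kappa^{-(1-\alpha)/6}$, again as in~\eqref{eq:algebraic-rate-comp}. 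Since $n > m$, one has $L \lesssim 2^{-m/2}$, so each enlarged box overlaps $O(1)$ of the $A_m$-cells on which $\varphi$ is constant. Summing then yields
\[\sum_{x_m\in\Lambda_m} 2^{-m}\big(|a_{x_m}| + \|f_{x_m}\|_{L^\infty}\big) \leq C\Big(\|\varphi\|_{L^1(\T^2)} + \|\varphi\|_{L^\infty(\T^2)}\exp\big({-}C^{-1}\kappa^{-(1-\alpha)/6}\big)\Big),\]
and combining with the algebraic scale factor gives the proposition.

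The main obstacle is the final averaging step: ensuring that the sum of local $L^\infty$ boundary bounds returns a constant multiple of $\|\varphi\|_{L^1}$ rather than the much larger $|\Lambda_m|\cdot\|\varphi\|_{L^\infty}$. This relies on the Lemma~\ref{lem:box_bounds} buffer $L$ being at most comparable to the scale $2^{-m/2}$ of $\varphi$'s piecewise-constant structure, which in turn uses $n>m$, ensured by the chosen values. This exactly mirrors the corresponding step in Proposition~\ref{prop:2_cell_averaging} and is what prevents the $\|\varphi\|_{L^\infty}$ term from contaminating the main estimate, leaving it with only the harmless exponentially small coefficient.
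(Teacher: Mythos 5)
Your proposal is correct and follows essentially the same route as the paper: split the $L^1$ norm over the cells $A_m+x_m$, rescale each cell to the unit box and the interval $[1/2,s_n]$ to $[0,1]$, apply Lemma~\ref{lem:constant_change} to the constant datum $a_{x_m}$ with inherited boundary data, bound $\|W\|_{L^1_tL^\infty_x}\lesssim 2^{(m-n)/2}$ and $\tau 2^m\kappa\lesssim\kappa^{(1-\alpha)/6}$, and control $\|f_{x_m}\|_{L^\infty}$ via Lemma~\ref{lem:box_bounds} with a buffer $L\lesssim 2^{-m/2}$ so the summed local $L^\infty$ bounds collapse to $\|\varphi\|_{L^1}$ plus an exponentially discounted $\|\varphi\|_{L^\infty}$ term. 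The only cosmetic difference is your choice $L\asymp 2^{-n/2}$ versus the paper's $L=C2^{-m/2}$; both satisfy $L\geq 2\|V\|_{L^1_tL^\infty_x}$ and $L\lesssim 2^{-m/2}$, so the argument is unaffected.
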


The argument uses the following lemma as the main tool. The lemma states that constants are essentially unchanged (in an $L^1$ sense) provided the $L^1_t L^\infty_x$ norm of the flow and the diffusion are small enough.

\begin{lemma}\label{lem:constant_change} There exists $C>0$ so that for all vector fields $u\in L^1([0,1],L^\infty(B))$, boundary data $f\in L^\infty([0,1]\times\partial B)$, constants $a\in\R$, and $\kappa>0$, it holds that
\[\|\sol_{0,1}^{u,\kappa,f}a-a\|_{L^1(B)}\leq C\|f-a\|_{L^\infty([0,1]\times\partial B)}\big(\|u\|_{L^1([0,1],L^\infty(B))}+\kappa^{1/2}\log(\kappa^{-1})\big).\]
\end{lemma}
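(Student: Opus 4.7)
The plan is to reduce to $a = 0$ by linearity and then invoke the stochastic representation of the drift-diffusion equation to convert the $L^1$ bound into a Brownian exit-time estimate. By linearity of~\eqref{eq:drift_diff_equation}, the function $\tilde\theta := \sol^{u,\kappa,f}_{0,\cdot} a - a$ solves the same equation with initial data $0$ and boundary data $\tilde f := f - a$. Setting $M := \|\tilde f\|_{L^\infty([0,1] \times \partial B)}$ and $D := \|u\|_{L^1([0,1], L^\infty(B))}$, it suffices to prove
\[\|\sol^{u,\kappa,\tilde f}_{0,1} 0\|_{L^1(B)} \leq C M \bigl(D + \kappa^{1/2} \log(\kappa^{-1})\bigr).\]

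For each $x \in B$, I would introduce the backward characteristic SDE
\[dY^x_s = -u(1-s, Y^x_s)\,ds + \sqrt{2\kappa}\,dw_s, \qquad Y^x_0 = x,\]
and let $\tau_x := \inf\{s \in [0,1] : Y^x_s \notin B\}$ (with $\tau_x := 1$ if no exit occurs). The Feynman--Kac representation for the time-reversed problem, combined with the vanishing initial data, yields
\[\tilde\theta(1,x) = \E\bigl[\tilde f(1-\tau_x, Y^x_{\tau_x}) \indc_{\tau_x < 1}\bigr],\]
from which the pointwise bound $|\tilde\theta(1,x)| \leq M \cdot \P(\tau_x < 1)$ immediately follows.

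To estimate the exit probability, I would use the elementary bound $\sup_{s \in [0,1]} |Y^x_s - x| \leq D + \sqrt{2\kappa}\, \sup_s |w_s|$, which gives, writing $d(x) := \dist(x, \partial B)$,
\[\P(\tau_x < 1) \leq \P\bigl(\sqrt{2\kappa}\, \sup_s |w_s| \geq (d(x) - D)_+\bigr) \leq C \exp\bigl(-(d(x)-D)_+^2 / (C\kappa)\bigr),\]
by the standard Gaussian tail for the maximum of planar Brownian motion (via reflection/Doob). Integrating over $B$ using the rectangular volume bound $|\{x \in B : d(x) \leq \rho\}| \leq C\rho$ yields
\[\int_B \P(\tau_x < 1)\,dx \leq C D + C \int_0^{\infty} \exp(-\rho^2/(C\kappa))\,d\rho \leq C(D + \kappa^{1/2}),\]
which is in fact slightly stronger than the stated bound---the $\log(\kappa^{-1})$ factor is harmless slack, presumably included for uniformity with the other lemmas in the appendix.

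The main technical hurdle will be rigorously justifying the Feynman--Kac representation when the drift $u$ is merely bounded measurable rather than Lipschitz. I would handle this by approximating $u$ by smooth divergence-free fields $u^\varepsilon \to u$ in $L^1_t L^\infty_x$, invoking the representation in the smooth setting, and passing to the limit using stability of the drift-diffusion equation at fixed $\kappa > 0$; the diffusion provides enough regularization that convergence of $u^\varepsilon$ in $L^1_t L^\infty_x$ forces convergence of the solution operators in operator norm on $L^1$. Alternatively, strong well-posedness of the SDE with bounded measurable drift (Zvonkin--Veretennikov theory) provides the representation directly without approximation.
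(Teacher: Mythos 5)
Your proposal is correct and follows essentially the same route as the paper: reduce by linearity to zero initial data with boundary data $f-a$, bound the solution pointwise by $\|f-a\|_{L^\infty}\,\P(\text{exit before time }1)$ via the stochastic characteristic representation, and integrate the exit probability over $B$. The only deviation is cosmetic: you integrate the Gaussian tail in $\dist(x,\partial B)$ directly, giving the marginally sharper bound $C(\|u\|_{L^1_tL^\infty_x}+\kappa^{1/2})$, whereas the paper splits $B$ into a boundary layer of width $\|u\|_{L^1_tL^\infty_x}+\kappa^{1/2}\log(\kappa^{-1})$ and its complement and reuses the crude exit-probability bound $C\kappa$ from the proof of Lemma~\ref{lem:mean-preservation}.
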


\begin{proof}[Proof of Proposition~\ref{prop:stays_near_constant}] We split the $L^1$ norm over boxes at scale $2^{-m/2}$
\begin{equation}\label{eq:small_box_split_2}
\Big\|\sol_{1/2,s_n}^{V,\kappa,\T^2}\varphi-\varphi\Big\|_{L^1}=\sum_{x_m\in \Lambda_m}\Big\|\sol_{1/2,s_n}^{V,\kappa,\T^2}\varphi-\varphi\Big\|_{L^1(A_m+x_m)}.    
\end{equation}
Letting $\tau:=s_n-\frac{1}{2}$, $f_{x_m}(t,x):=\sol_{1/2,(1/2+\tau t)}^{V,\kappa,\T^2}\varphi(\mathcal{R}^mx+x_m)$, $W(t,x):=\tau \mathcal{R}^{-m}V(\frac{1}{2}+\tau t,\mathcal{R}^mx+x_m)$, and $a_{x_m}$ equal the constant value $\varphi$ takes on $A_m+x_m$ for all $x_m\in\Lambda_m$, by rescaling in time and space it holds that
\[\sol_{1/2,s_n}^{V,\kappa,\T^2}\varphi(\mathcal{R}^mx+x_m)=\sol_{0,1}^{W,\tau2^m\kappa,f_{x_m}}a_{x_m}(x)\]
for all $x\in B$. Lemma~\ref{lem:constant_change} thus implies that
\begin{align}
\Big\|\sol_{1/2,s_n}^{V,\kappa,\T^2}\varphi-\varphi\Big\|_{L^1(A_m+x_m)}&=2^{-m}\Big\|\sol_{0,1}^{W,\tau2^m\kappa,f_{x_m}}a_{x_m}-a_{x_m}\Big\|_{L^1(B)}\notag
\\&\leq C2^{-m}(|a_{x_m}|+\|f_{x_m}\|_{L^\infty})\big(\|W\|_{L^1_tL^\infty_x}+(\tau2^m\kappa)^{1/2}\log\big((\tau2^m\kappa)^{-1}\big)\big).\label{eq:small_box_bound}
\end{align}
By the definition of $W$ and $V$ we have that
\[\|W\|_{L^1_tL^\infty_x}\leq2^{m/2}\sum_{j=n}^\infty \sigma_j\|V\|_{L^\infty([s_{j+1},s_j]\times \T^2)}\leq 2^{m/2}\sum_{j=n}^\infty 2^{-j/2}\|v\|_{L^\infty}\leq C 2^{(m-n)/2}\]
uniformly over $x_m.$  Since $\tau\leq C\sigma_n$,~\eqref{eq:algebraic-rate-comp} implies that
\begin{equation}
    \label{eq:algebraic-rate-comp-2}
\tau2^m\kappa\leq C\sigma_n 2^n\kappa\leq C\kappa^{(1-\alpha)/6}.
\end{equation}
We also have that
\[2^{(m-n)/2} \leq C \kappa^{-\frac{\alpha+5}{4(\alpha+2)} + \frac{3}{2(\alpha+2)}} = C \kappa^{\frac{1-\alpha}{4(\alpha+2)}} \leq C \kappa^{(1-\alpha)/12}.\]
Together these inequalities show that
\[\|W\|_{L^1_tL^\infty_x}+(\tau2^m\kappa)^{1/2}\log\big((\tau2^m\kappa)^{-1}\big)\leq C\kappa^{(1-\alpha)/12}.\]
Combining~\eqref{eq:small_box_split_2} and \eqref{eq:small_box_bound}, we have thus found that
\[\Big\|\sol_{1/2,s_n}^{V,\kappa,\T^2}\varphi-\varphi\Big\|_{L^1}\leq C\bigg(\sum_{x_m\in \Lambda_m}2^{-m}(|a_{x_m}|+\|f_{x_m}\|_{L^\infty})\bigg)\kappa^{(1-\alpha)/12}.\]
Thus, to conclude the proposition, it only remains to bound the sum on the right-hand side above.

First, we note that by the definition of $V$
\[\|V\|_{L^1([1/2,s_n],L^\infty(\T^2))}\leq \sum_{j=n}^\infty2^{-j/2}\|v\|_{L^\infty}\leq C2^{-n/2}.\]
After rescaling in time, Lemma~\ref{lem:box_bounds} with $L:=C2^{-m/2}\geq 2\|V\|_{L^1_tL^\infty_x}$ implies that
\[\|f_{x_m}\|_{L^\infty}\leq \|\sol^{V,\kappa, \T^2}_{1/2,t}\varphi\|_{L^\infty([1/2,s_n]\times A_m+x_m)}\leq \|\varphi\|_{L^\infty( A_m+[-L,L]^2+x_m)}+C\|\varphi\|_{L^\infty}\exp(-C^{-1}(\tau 2^m\kappa)^{-1}).\]
 Note also that $|a_{x_m}|\leq \|\varphi\|_{L^\infty( A_m+[-L,L]^2+x_m)}$, thus
\[|a_{x_m}|+\|f_{x_m}\|_{L^\infty}\leq  C\big(\|\varphi\|_{L^\infty( A_m+[-L,L]^2+x_m)}+\|\varphi\|_{L^\infty(\T^2)}\exp(-C^{-1}\kappa^{-(1-\alpha)/6})\big),\]
where we use~\eqref{eq:algebraic-rate-comp-2}. We thus find in total that
\begin{align*}
\sum_{x_m\in\Lambda_m}2^{-m}(|a_{x_m}|+\|f_{x_m}\|_{L^\infty})&\leq C2^{-m}\sum_{x_m\in\Lambda_m} \|\varphi\|_{L^\infty( A_m+[-L,L]^2+x_m)}+\|\varphi\|_{L^\infty(\T^2)}\exp\big({-C^{-1}\kappa^{-\frac{1-\alpha}{6}}}\big)
\\&\leq C\big(\|\varphi\|_{L^1}+\|\varphi\|_{L^\infty}\exp\big({-C^{-1}\kappa^{-(1-\alpha)/6}}\big)\big),
\end{align*}
which concludes the proposition.
\end{proof}  

Finally, we conclude the proof of Theorem~\ref{thm:total_dissipation} by combining Propositions~\ref{prop:2_cell_averaging},~\ref{prop:stays_near_constant}, and an initial smoothing estimate by pure diffusion on the time interval $[0,\frac{1}{2}].$

\begin{proof}[Proof of Theorem~\ref{thm:total_dissipation}] Since $V(t,x)=0$ when $t\in[0,1/2]$, we have that
\[\sol_{0,1/2}^{V,\kappa,\T^2}\theta_0=\Phi_{\kappa/2}*\theta_0.\]
Letting $n$ and $m$ be as in Propositions~\ref{prop:2_cell_averaging} and~\ref{prop:stays_near_constant}, we let
\[\varphi(x):=\sum_{x_m\in\Lambda_m}(\Phi_{\kappa/2}*\theta_0)_{A_m+x_m}\indc_{A_m+x_m}(x)\]
so that $(\varphi)_{\T^2}=(\Phi_{\kappa/2}*\theta_0)_{\T^2}=0$, and $\varphi$ satisfies the conditions of Propositions~\ref{prop:2_cell_averaging} and~\ref{prop:stays_near_constant}.

Using the triangle inequality and the fact that $\sol^{V,\kappa,\T^2}_{s,t}$ is an $L^1$ contraction, we have that
\begin{equation}\label{eq:time_split}
\Big\|\sol_{0,1}^{V,\kappa,\T^2}\theta_0\Big\|_{L^1}\leq\Big\|\sol_{0,1/2}^{V,\kappa,\T^2}\theta_0-\varphi\Big\|_{L^1}+\Big\|\sol_{1/2,s_n}^{V,\kappa,\T^2}\varphi-\varphi\Big\|_{L^1}+\Big\|\sol_{s_n,1}^{V,\kappa,\T^2}\varphi-(\varphi)_{\T^2}\Big\|_{L^1}.
\end{equation}
Propositions~\ref{prop:2_cell_averaging} and~\ref{prop:stays_near_constant} together imply that
\[\Big\|\sol_{1/2,s_n}^{V,\kappa,\T^2}\varphi-\varphi\Big\|_{L^1}+\Big\|\sol_{s_n,1}^{V,\kappa,\T^2}\varphi-(\varphi)_{\T^2}\Big\|_{L^1}\leq C\Big(\|\varphi\|_{L^1}+\|\varphi\|_{L^\infty}\exp\big({-C^{-1}}\kappa^{-(1-\alpha)/6}\big)\Big)\kappa^{\frac{(1-\alpha)^2}{72}}.\]
Since $\|\varphi\|_{L^1}\leq \|\Phi_{\kappa/2}*\theta_0\|_{L^1}\leq \|\theta_0\|_{TV}$ and $\|\varphi\|_{L^\infty}\leq\|\Phi_{\kappa/2}*\theta_0\|_{L^\infty} \leq C\kappa^{-1}\|\theta_0\|_{TV},$ it holds that
\[\|\varphi\|_{L^1}+\|\varphi\|_{L^\infty}\exp\big({-C^{-1}}\kappa^{-(1-\alpha)/6}\big)\Big)\leq C\|\theta_0\|_{TV}.\]
To conclude the theorem, it thus only remains to bound the first term on the right-hand side of~\eqref{eq:time_split}.

Using the Poincar\'e inequality,
\begin{align*}
\big\|\sol_{0,1/2}^{V,\kappa,\T^2}\theta_0-\varphi\big\|_{L^1}&\leq\sum_{x_m\in\Lambda_m}\Big\|\Phi_{\kappa/2}*\theta_0-\Big(\Phi_{\kappa/2}*\theta_0\Big)_{A_m+x_m}\Big\|_{L^1(A_m+x_m)}
\\&\leq C2^{-m/2} \sum_{x_m\in\Lambda_m}\|\nabla\Phi_{\kappa/2}*\theta_0\|_{L^1(A_m+x_m)}
\\&= C2^{-m/2}\|\nabla \Phi_{\kappa/2}*\theta_0\|_{L^1(\T^2)}
\\&\leq C2^{-m/2}\kappa^{-1/2}\|\theta_0\|_{TV}.
\end{align*}
By our choice of $m$
\[2^{-m/2}\kappa^{-1/2}\leq C\kappa^{\frac{\alpha+5}{4(\alpha+2)} - \frac{1}{2}} = C \kappa^{\frac{1-\alpha}{4(\alpha+2)}} \leq C \kappa^{(1-\alpha)/12}.\]
Combining the above estimates, we conclude.
\end{proof}

\appendix

\section{Stability estimates using stochastic flows}
\label{appendix:stability}

For convenience, throughout this section, we work on the square box $B=[0,1]^2$ and the square torus as opposed to Definition~\ref{def:box}. As the proofs are easily modified to handle non-isotropic diffusions, at the cost of slightly modifying the constant, the statements for the ``self-similar box'' $[0,\sqrt{2}] \times [0,1]$ follow.

We will repeatedly use the stochastic characteristic representation of drift-diffusion equations and the characteristic representation of transport equations. In particular, given a vector field $u$ in $L^\infty([0,1],L^\infty(B))$ we let $X^{\kappa}_t(x)$ solve the backwards stochastic differential equation
\begin{equation}\label{eq:stochastic_characteristic}\begin{cases}
    \frac{d}{dt}X^\kappa_t(x)=u(t,X^\kappa_t(x))\,dt+\sqrt{2\kappa}\,dw_{1-t},
    \\ X_1^\kappa(x)=x,
\end{cases}
\end{equation}
where $w_t$ is standard $2$-dimensional Brownian motion. That is, for all $t\in[0,1],$
\[X_t^\kappa(x)=x-\int_t^1 u(s,X_s^\kappa(x))\,ds+\sqrt{2\kappa}w_{1-t}.\]
Then, defining the stopping time $\tau_x:=\sup\Big\{t\in[0,1]\mid X_t^\kappa(x)\in\partial B\Big\}$ with respect to the backwards filtration $\mathcal{F}_t=\sigma(w_{1-s}:s\geq t)$, by the Feynman--Kac formula it holds that for all boundary data $f\in L^\infty([0,T]\times \partial B),$ the solution operator $\sol^{u,\kappa,f}_{0,1}$ to~\eqref{eq:drift_diff_equation} has the following representation:
\[\sol_{0,1}^{u,\kappa,f}\theta_0(x)=\mathbb{E}\big[\theta_0\big(X^{\kappa}_0(x)\big);\tau_x\leq 0\big]+\mathbb{E}\big[f\big(\tau_x,X^{\kappa}_{\tau_x}(x)\big);\tau_x>0\big].\]
This immediately implies that $\|\sol_{0,1}^{u,\kappa,f}\theta_0\|_{L^\infty}\leq \|\theta_0\|_{L^\infty}+\|f\|_{L^\infty}$. Similarly, the solution operator $\sol_{0,1}^{u,\kappa,\T^2}$ satisfies
\[\sol_{0,1}^{u,\kappa,\T^2}\theta_0(x)=\E\big[\theta_0\big(X_0^\kappa(x)\big)\big].\]

We also let $X_t(x)$ denote the backwards ODE given by setting $\kappa$ to $0$ in~\eqref{eq:stochastic_characteristic}. Then, by the characteristic representation of the transport equation, we similarly find that
\[\sol_{0,1}^u\theta_0(x)=\theta_0\big(X_0(x)\big),\]
and thus $\|\sol_{0,1}^u\theta_0\|_{L^\infty}\leq \|\theta_0\|_{L^\infty}.$

\begin{proof}[Proof of Lemma~\ref{lem:small_kappa_difference}]
We will exploit the characteristic representations to bound the difference between $\sol_{0,1}^{u,f,\kappa}\indc_E(x)$ and $\sol_{0,1}^{u}\indc_E(x)$ when $x$ is sufficiently far away from the boundary of $B$ and $E$. In particular, we define
\[A:=\big\{x\in B\mid \dist\big(X_0(x),\partial E \cup \partial B\big) + \dist\big(x, \partial B)> \kappa^{1/2}\log(\kappa^{-1})\big\}.\]
Then we have that 
\begin{align}
\notag
\big\|\sol_{0,1}^{u,\kappa,f}\indc_E-\sol_{0,1}^u\indc_E\big\|_{L^1(B)}&=\int_{B\setminus A} \big|\sol_{0,1}^{u,\kappa,f}\indc_E(x)-\sol_{0,1}^u\indc_E(x)\big|\,dx+\int_{A} \big|\sol_{0,1}^{u,\kappa,f}\indc_E(x)-\sol_{0,1}^u\indc_E(x)\big|\,dx
\\&\leq \big(1+\|f\|_{L^\infty}\big)|B\setminus A|+ \sup_{x \in A}|\sol_{0,1}^{u,f,\kappa}\indc_E(x)-\sol_{0,1}^{u}\indc_E(x)| .
\label{eq:characteristic-split}
\end{align}
Since $u$ is divergence-free,
\begin{align}
\notag
|B\setminus A|&\leq 2\bigg|\Big\{x\in B\mid \dist(x,\partial E\cup\partial B)\leq \kappa^{1/2}\log(\kappa^{-1})\Big\}\bigg|
\\&\leq C\big(1+|\partial E|\big)\kappa^{1/2}\log(\kappa^{-1})    
\label{eq:measure-curve-bound}
\end{align}
where we have used that for any $\eps>0$ and finite Lipschitz curve $S$
\[\big|\big\{x\mid \dist(x,S)\leq \eps\big\}\big|\leq C(\eps|S|+\eps^2).\]
Thus, by~\eqref{eq:characteristic-split} and the measure bound~\eqref{eq:measure-curve-bound}, in order to conclude, it suffices to show there exists $C\big(\|u\|_{L^\infty_tW^{1,\infty}_x}\big)>0$ so that
\[|\sol_{0,1}^{u,f,\kappa}\indc_E(x)-\sol_{0,1}^{u}\indc_E(x)|\leq C\big(1+\|f\|_{L^\infty}\big)\kappa,\]
for all $x\in A$ since $\kappa \ll\kappa^{1/2}\log(\kappa^{-1})$

We fix $x \in A$. Then, expanding the characteristic representations, we find that
\begin{align*}|\sol_{0,1}^{u,f,\kappa}\indc_E(x)-\sol_{0,1}^{u}\indc_E(x)|&\leq\Big|\mathbb{E}\big[\indc_E\big(X_0^\kappa(x)\big)-\indc_E\big(X_0(x)\big); \tau_x \leq 0\big] \Big|
\\&\qquad+\Big|\mathbb{E}\big[f\big(\tau_x,X^{\kappa}_{\tau_x}(x)\big) -\indc_{E}\big(X_0(x)\big);\tau_x>0\big]\Big|.
\end{align*}
Since $\indc_E$ is equal to $0$ or $1$, we have that
\begin{equation}\label{eq:membership_bound}\Big|\mathbb{E}\big[\indc_E\big(X_0^\kappa(x)\big)-\indc_E\big(X_0(x)\big) ; \tau_x \leq 0\big] \Big|\leq\P\Big(\indc_E\big(X_0^\kappa(x)\big)\neq\indc_E\big(X_0(x)\big)\Big).\end{equation}
For the other term, we bound
\[\Big|\mathbb{E}\big[f\big(\tau_x,X^{\kappa}_{\tau_x}(x)\big) -\indc_{E}\big(X_0(x)\big);\tau_x>0\big]\Big|\leq \big(1+\|f\|_{L^\infty}\big)\P(\tau_x>0).
\]
We thus need to show that~\eqref{eq:membership_bound} and $\P(\tau_x>0)$ are both bounded by $C\kappa$.

For~\eqref{eq:membership_bound} we note that $A$ is defined so that if
\[|X_0^\kappa(x)-X_0(x)|\leq \kappa^{1/2}\log(\kappa^{-1}),\]
then $\indc_{E}\big(X_0^\kappa(x)\big)=\indc_E\big(X_0(x)\big)$. Therefore
\[\P\Big(\indc_E\big(X_0^\kappa(x)\big)\neq\indc_E\big(X_0(x)\big)\Big)\leq \P\Big(\big|X_0^\kappa(x)-X_0(x)\big|>\kappa^{1/2}\log(\kappa^{-1})\Big).\]
Using the equations for $X_t^\kappa(x)$ and $X_t(x)$,
\[|X_t^\kappa(x)-X_t(x)|\leq \|u\|_{L^\infty_tW^{1,\infty}_x}\int_t^1|X_s^\kappa(x)-X_s(x)|\,ds+\sqrt{2\kappa}\sup_{s\in[t,1]}|w_{1-s}|,\]
thus Gr\"onwall's inequality implies that there exists $C(\|u\|_{L^\infty_tW^{1,\infty}_x})>0$ so that 
\[|X_0^\kappa(x)-X_0(x)|\leq C\kappa^{1/2}\sup_{s\in[0,1]}|w_s|.\]
Using the well-known bound on the probability that Brownian motion has large excursions, we find that
\[\P\Big(\big|X_0^\kappa(x)-X_0(x)\big|>\kappa^{1/2}\log(\kappa^{-1})\Big)\leq \P\bigg(\sup_{s\in[0,1]}|w_s|>C^{-1}\log(\kappa^{-1})\bigg) \leq C e^{-C^{-1} \log(\kappa^{-1})^2} \leq  C\kappa,
\]
as desired.

To conclude, we just need to bound $\P(\tau_x>0)$. It is here that we use that $u$ is tangential to $\partial B$. We first show that there exists $C\big(\|u\|_{L^\infty_t W^{1,\infty}_x}\big)>0$ so that for all $x\in A$
\[\{\tau_x>0\}\subseteq \bigg\{\sup_{[0,1]}|w_t|\geq C^{-1}\log(\kappa^{-1})\bigg\}.\]
We then find that $\P(\tau_x>0)\leq C\kappa$ using the same Brownian excursion bound as used above.

Suppose that $\tau_x>0$ so that $X_{\tau_x}^\kappa(x)\in\partial B.$  Without loss of generality, we may assume that $X_{\tau_x}^\kappa(x)\in \{0\}\times [0,1].$ That is, $X_{\tau_x}^\kappa(x)$ is in the left boundary of $B$. Then, letting $Y_t:=X_{\tau_x+t}^\kappa(x)$, we have that
\[Y_t=Y_0+\int_0^t u(\tau_x+s,Y_s)\,ds-\sqrt{2\kappa}(w_{1-\tau_x+t}-w_{1-\tau_x}),\]
and $Y_{1-\tau_x}=x$. Since $u_1(0,z)=0$ for all $z\in[0,1]$ and $u$ is uniformly Lipschitz in time, this implies that
\[|(Y_t)_1|\leq \|u\|_{L^\infty W^{1,\infty}}\int_0^t|(Y_s)_1|\,ds+\sup_{s\in[0,t]}\sqrt{2\kappa}|w_{1-\tau_x+s}-w_{1-\tau_x}|,\]
where $(Y_t)_1$ denotes the first coordinate of $Y_t$. Gr\"onwall's inequality thus implies that there exists $C\big(\|u\|_{L^\infty_t W^{1,\infty}_x}\big)>0$ so that
\[|x_1|=|(Y_{1-\tau_x})_1|\leq C\kappa^{1/2}\sup_{t\in[0,\tau_x]}|w_{1-\tau_x+t}-w_{1-\tau_x}|.\]
Since $x\in A$, it must be the case that $|x_1|\geq \kappa^{1/2}\log(\kappa^{-1})$, thus
\[\kappa^{1/2}\log(\kappa^{-1})\leq C\kappa^{1/2}\sup_{t\in[0,\tau_x]}|w_{1-\tau_x +t}-w_{1-\tau_x}|\leq C\kappa^{1/2}\sup_{t\in[0,1]} |w_t|,\]
    concluding the bound on $\P(\tau_x>0)$ and thus the proof.
\end{proof}

\begin{proof}[Proof of Lemma~\ref{lem:mean-preservation}] 
We prove instead the equivalent statement,
    \[\Big|\int_B \sol_{0,1}^{u,\kappa,f} \theta_0- \theta_0\,dx\Big| \leq C\big(\|\theta_0\|_{L^\infty(B)}+\|f\|_{L^\infty([0,1]\times\partial B)}\big)\big(\|u\|_{L^1([0,1],L^\infty(B))}+\kappa^{1/2} \log (\kappa^{-1})\big).\]

Throughout we let $\delta:=\|u\|_{ L^1_tL^\infty_x}+\kappa^{1/2}\log(\kappa^{-1})$ and $B_\delta:=[\delta,1-\delta]^2.$ Then using the characteristic representations of $\sol^{u,\kappa,f}_{0,1}$ and $\sol^u_{0,1}$ we find that
\[\int_B\sol^{u,\kappa,f}_{0,1}\theta_0(x)\,dx=\int_{B_\delta}\mathbb{E}\Big[\theta_0\big(X^{\kappa}_0(x)\big);\tau_x\leq 0\Big]+\mathbb{E}\Big[f\big(\tau_x,X^{\kappa}_{\tau_x}(x)\big);\tau_x>0\Big]\,dx+\int_{B\setminus B_\delta} \sol^{u,\kappa,f}_{0,1}\theta_0(x)\,\,dx.\]
Since $|B\setminus B_\delta|\leq C\delta$, we have that
\begin{equation}\label{eq:B_delta_bound}\bigg|\int_{B\setminus B_\delta} \sol^{u,\kappa,f}_{0,1}\theta_0(x)\,\,dx\bigg|\leq C\big(\|\theta_0\|_{L^\infty}+\|f\|_{L^\infty}\big)\delta.\end{equation}
We also bound
\begin{equation}\label{eq:stopping_holder_bound}\bigg|\int_{B_\delta}\mathbb{E}\Big[f\big(\tau_x,X^{\kappa}_{\tau_x}(x)\big);\tau_x>0\Big]\,dx\bigg|\leq \|f\|_{L^\infty}\int_{B_\delta}\mathbb{P}(\tau_x>0)\,dx.
\end{equation}
Combining~\eqref{eq:B_delta_bound} and~\eqref{eq:stopping_holder_bound} in total we have that
\begin{align}
\bigg|\int_B\sol^{u,\kappa,f}_{0,1}\theta_0(x)\,dx-\int_B\theta_0(x)\,dx\bigg|&\leq \bigg|\mathbb{E}\bigg[\int_{B_\delta}\mathbb{E}\Big[\theta_0\big(X^{\kappa}_0(x)\big);\tau_x\leq 0\Big]\,dx-\int_B\theta_0(x)\,dx\bigg]\bigg|\label{eq:transport_term}
\\&\quad+\|f\|_{L^\infty}\int_{B_\delta}\mathbb{P}(\tau_x>0)\,dx\label{eq:stopping_term}
\\&\quad+C\big(\|\theta_0\|_{L^\infty}+\|f\|_{L^\infty}\big)\delta.\notag
\end{align} To conclude we need to bound~\eqref{eq:transport_term} and~\eqref{eq:stopping_term}.

Before continuing, we will give good bounds on the probability that $\tau_x>0$ uniformly over $B_\delta$. First, we note that for all $t\geq \tau_x$
\[|X_t^\kappa(x)-x|\leq \|u\|_{ L^1_tL^\infty_x}+\sqrt{2\kappa}\sup_{t\in[0,1]}|w_t|.\]
This implies that
\begin{equation}\label{eq:stopping_bound}
\P\bigg(\sup_{x\in B_\delta}\tau_x>0\bigg)\leq \mathbb{P}\bigg(\sup_{t\in[0,1]}|w_t|\geq C^{-1}\log(\kappa^{-1})\bigg)\leq C\kappa
\end{equation}
for some $C>0,$ bounding the excursions of Brownian motion as in the proof of Lemma~\ref{lem:small_kappa_difference}. As a consequence, we immediately find that~\eqref{eq:stopping_term} is bounded by $C\|f\|_{L^\infty}\kappa$.

To bound~\eqref{eq:transport_term} first we note that
\begin{align*}&\bigg|\mathbb{E}\bigg[\int_{B_\delta}\mathbb{E}\Big[\theta_0\big(X^{\kappa}_0(x)\big);\tau_x\leq 0\Big]\,dx-\int_B\theta_0(x)\,dx\bigg]\bigg|
\\&\qquad\leq \bigg|\mathbb{E}\bigg[\int_{B_\delta}\mathbb{E}\Big[\theta_0\big(X^{\kappa}_0(x)\big);\sup_{x\in B_\delta}\tau_x\leq 0\Big]\,dx-\int_B\theta_0(x)\,dx\bigg]\bigg|+\|\theta_0\|_{L^\infty}\mathbb{P}\bigg(\sup_{x\in B_\delta}\tau_x>0\bigg)
\\&\qquad\leq \mathbb{E}\bigg[\bigg|\int_{B_\delta}\theta_0\big(X^{\kappa}_0(x)\big)\,dx-\int_B\theta_0(x)\,dx\bigg|;\sup_{x\in B_\delta}\tau_x\leq 0\bigg] +C\|\theta_0\|_{L^\infty}\kappa,
\end{align*}
where in the last line we've used Fubini and~\eqref{eq:stopping_bound}. When $\sup_{x\in B_\delta}\tau_x\leq 0$, then letting $\tilde B_\delta:=X_0^{\kappa}(B_\delta)$, since $u$ is divergence-free it holds that $\tilde B_\delta\subseteq B$,
\begin{align*}
\int_{B_\delta}\theta_0\big(X^{\kappa}_0(x)\big)\,dx=\int_{\tilde B_\delta}\theta_0(x)\,dx,
\end{align*}
and $|B\setminus \tilde B_\delta|\leq C\delta$. This implies that
\[\bigg|\int_{B_\delta}\theta_0\big(X^{\kappa}_0(x)\big)\,dx-\int_B\theta_0(x)\,dx\bigg|\leq \|\theta_0\|_{L^\infty}|B\setminus \tilde B_\delta|\leq C\|\theta_0\|_{L^\infty}\delta.\]
In total we have thus found that~\eqref{eq:transport_term} is bounded by $C\|\theta_0\|_{L^\infty}(\delta+\kappa).$ Combining this with the bound on~\eqref{eq:stopping_term} and the fact that $\kappa<\delta$, this concludes the lemma.
\end{proof}

\begin{proof}[Proof of Lemma~\ref{lem:close-to-toroidal-heat}] Letting $f(t,x):=\sol_{0,t}^{0,\kappa,\T^2}\theta_0(x)$, it is clear that $\|f\|_{L^\infty([0,1]\times\partial B)}\leq \|\theta_0\|_{L^\infty(B)}$ and $\sol_{0,1}^{0,\kappa,\T^2}\theta_0=\sol_{0,1}^{0,\kappa,f}\theta_0$. It thus suffices to prove that for any $f,g\in L^\infty([0,1]\times \partial B)$ and $\theta_0\in L^\infty(\T^2)$
\[\|\sol^{0,\kappa,f}_{0,1}\theta_0-\sol^{0,\kappa,g}_{0,1}\theta_0\|_{L^1}\leq C\big(\|f-g\|_{L^\infty}\big)\kappa^{1/2}\log(\kappa^{-1}).\]

Using the characteristic representation of $\sol^{0,\kappa,f}_{0,1}\theta_0$ and $\sol^{0,\kappa,g}_{0,1}\theta_0$
\[\sol^{0,\kappa,f}_{0,1}\theta_0(x)-\sol^{0,\kappa,g}_{0,1}\theta_0(x)=\mathbb{E}\Big[f\big(\tau_x,X_{\tau_x}^\kappa\big)-g\big(\tau_x,X_{\tau_x}^\kappa\big);\tau_x>0\Big].\]
Letting $B_\kappa:=[\kappa^{1/2}\log(\kappa^{-1}),1-\kappa^{1/2}\log(\kappa^{-1})]^2$
it thus holds that
\begin{align}\notag
\|\sol^{0,\kappa,f}_{0,1}\theta_0-\sol^{0,\kappa,g}_{0,1}\theta_0\|_{L^1}&\leq \|f-g\|_{L^\infty}\bigg(\int_{B_\kappa}\P(\tau_x>0)\,dx+|B\setminus B_\kappa|\bigg)
\\&\leq \|f-g\|_{L^\infty}\bigg(\int_{B_\kappa}\P(\tau_x>0)\,dx+C\kappa^{1/2}\log(\kappa^{-1})\bigg).
\label{eq:split_bound}
\end{align}
Since $X_t^\kappa(x)=x+\sqrt{2\kappa}w_{1-t}$ for $t\geq \tau_x$ we have that for all $x\in B_\kappa$
\[\P(\tau_x>0)\leq \P\bigg(\sup_{t\in[0,1]}|w_t|>C^{-1}\log(\kappa^{-1})\bigg)\leq C\kappa.\]
Inserting this into~\eqref{eq:split_bound}, we conclude the lemma.
\end{proof}

\begin{proof}[Proof of Lemma~\ref{lem:box_bounds}]
Suppose $x\in [-\ell,\ell]^2$. Then using the stochastic 
characteristic representation of $\sol_{0,1}^{u,\kappa,\T^2}\theta_0(x)$,
\[\sol_{0,1}^{u,\kappa,\T^2}\theta_0(x)=\mathbb{E}\Big[\theta_0\big(X^{\kappa}_0(x)\big)\Big].\]
We thus have that
\begin{align}
\Big|\sol_{0,1}^{u,\kappa,\T^2}\theta_0(x)\Big|&\leq \bigg|\mathbb{E}\Big[\theta_0\big(X^{\kappa}_0(x)\big);X_0^\kappa(x)\in[-\ell-L,\ell+L]^2\Big]\bigg|
\notag\\&\qquad\qquad+\bigg|\mathbb{E}\Big[\theta_0\big(X^{\kappa}_0(x)\big);X_0^\kappa(x)\notin[-\ell-L,\ell+L]^2\Big]\bigg|\notag
\\&\leq \|\theta_0\|_{L^\infty([-\ell-L,\ell+L]^2)}+\|\theta_0\|_{L^\infty}\mathbb{P}\Big(X_0^\kappa(x)\notin[-\ell-L,\ell+L]^2\Big).\label{eq:expectation_split}
\end{align}
Since
\[\big|X_0^\kappa(x)-x\big|\leq \|u\|_{L^1_tL^\infty_x}+\sqrt{2\kappa}\sup_{t\in[0,1]}|w_t|,\]
$x\in[-\ell,\ell]^2,$ and $L>2\|u\|_{L^1_tL^\infty_x}$
\[\mathbb{P}\Big(X_0^\kappa(x)\notin[-\ell-L,\ell+L]^2\Big)\leq \mathbb{P}\Big(\sup_{t\in[0,1]}|w_t|>C^{-1}\kappa^{-1/2}L\Big)\leq C\exp\big(-C^{-1}\kappa^{-1}L^2\big).\]
Combining this bound with~\eqref{eq:expectation_split}, this concludes the lemma.
\end{proof}

\begin{proof}[Proof of Lemma~\ref{lem:constant_change}]
Note that
\[\|\sol_{0,1}^{u,\kappa,f}a-a\|_{L^1(B)} = \|\sol_{0,1}^{u,\kappa,f-a}0\|_{L^1(B)}.\]
Then by the stochastic characteristic representation
\[|\sol_{0,1}^{u,\kappa,f-a}0(x)|=|\E[f(\tau_x,X_{\tau_x})-a;\tau_x>0]| \leq  \|f-a\|_{L^\infty}\P(\tau_x>0).\]
Thus
\[\|\sol_{0,1}^{u,\kappa,f}a-a\|_{L^1(B)} \leq \|f-a\|_{L^\infty}\int_{B}\P(\tau_x>0)\,dx.\]
Letting $\delta$ and $B_\delta$ be as in Lemma~\ref{lem:mean-preservation}, it holds that 
\[\int_{B}\P(\tau_x>0)\,dx\leq |B\setminus B_\delta|+\int_{B_\delta}\P(\tau_x>0)\,dx\leq C\big(\|u\|_{L^1_tL^\infty_x}+\kappa^{1/2} \log (\kappa^{-1})\big)\]
using the same bounds on $\P(\tau_x>0)$. This concludes the lemma.
\end{proof}

\section{Regularity of the vector fields}
\label{appendix:regularity}

\begin{proof}[Proof of Lemma~\ref{lem:v-props}]
    Items~\ref{item:v-gluable} and~\ref{item:v-periodic} are direct from Items~\ref{item:dies} and~\ref{item:periodic} of Theorem~\ref{thm:ACM_field}. For Item~\ref{item:v-regular}, we first compute the spatial regularity. For $t\in [t_n,t_{n+1}]$, we have by definition and then interpolating that
    \begin{align*}\|v(t,\cdot)\|_{C^\alpha} 
    &= \tau_n^{-1}\|U(\tau_n^{-1}(t-t_n) +n,\cdot)\|_{C^\alpha} 
    \\&\leq C 5^{(1-\alpha)n} \|U(\tau_n^{-1}(t-t_n) +n,\cdot)\|_{L^\infty}^{1-\alpha} \|U(\tau_n^{-1}(t-t_n) +n,\cdot)\|_{W^{1,\infty}}^\alpha 
    \\&\leq C 5^{(1-\alpha)n} 5^{-(1-\alpha)n} = C,
    \end{align*}
    where for the final inequality, we use Item~\ref{item:L^infty_bound} of Theorem~\ref{thm:ACM_field}. Taking the supremum over $t \in [0,1]$, we conclude $v \in L^\infty([0,1], C^\alpha(B))$. 

    For the time regularity, let $k := \floor{\frac{\alpha}{1-\alpha}}$ and $\gamma := \frac{\alpha}{1-\alpha} - \floor{\frac{\alpha}{1-\alpha}}.$ Using Item~\ref{item:L^infty_bound} of Theorem~\ref{thm:ACM_field}, we have that for $t_n \leq s \leq t \leq t_{n+1}$, that 
    \begin{align*}
    \|\partial_t^k v(t,\cdot) - \partial_t^k v(s,\cdot)\|_{L^\infty} &\leq C 5^{(1-\alpha) (k+1) n} \| \partial_t^kU(\tau_n^{-1}(t-t_n) +n,\cdot) - \partial_t^kU(\tau_n^{-1}(s-t_n) +n,\cdot)\|_{L^\infty}
    \\&\leq C 5^{(1-\alpha)(k+1)n} \|\partial_t^k U\|_{C^\gamma([n,n+1], L^\infty(B))} 5^{(1-\alpha)\gamma n} |t-s|^{\gamma}
    \\&\leq C 5^{(1-\alpha)(\frac{\alpha}{1-\alpha} + 1) n - n} |t-s|^\gamma = C |t-s|^\gamma.
    \end{align*}
    If $t_n \leq s \leq t_{n+1} \leq t \leq t_{n+2}$, using the above, we get
    \begin{align*} \|\partial_t^k v(t,\cdot) - \partial_t^k v(s,\cdot)\|_{L^\infty} &\leq \|\partial_t^k v(t,\cdot) - \partial_t^k v(t_{n+1},\cdot)\|_{L^\infty} + \| \partial_t^k v(t_{n+1},\cdot)- \partial_t^k v(s,\cdot)\|_{L^\infty}
    \\&\leq C (|t - t_{n+1}|^\gamma + |s- t_{n+1}|^\gamma) \leq C |t-s|^\gamma.
    \end{align*}
    Finally, if $0 \leq  t_n \leq s \leq  t_{n+2} \leq t$, we have that
    \[
    \|\partial_t^k v(t,\cdot) - \partial_t^k v(s,\cdot)\|_{L^\infty} \leq  \|\partial_t^k v(t,\cdot)\|_{L^\infty} + \|\partial_t^k v(s,\cdot)\|_{L^\infty}
    \leq  C 5^{-n} \leq C 5^{-n} |t-s|^\gamma \tau_n^{-\gamma} \leq C |t-s|^\gamma.
    \]
    Thus combining the cases, we get the time regularity, $v \in C^{\frac{\alpha}{1-\alpha}}([0,1], L^\infty(B)),$ as desired.
 \end{proof}

{\small
\bibliographystyle{alpha}
\bibliography{references}
}

\end{document}